\documentclass[11pt, reqno]{amsart}
\usepackage{enumitem,mathrsfs}
\makeatletter
\newcommand{\mylabel}[2]{#2\def\@currentlabel{#2}\label{#1}}
\makeatother
\usepackage{array}
\usepackage[english]{babel}
\usepackage[top=1in, bottom=1in, left=1in, right=1in]{geometry}

\geometry{letterpaper}

\expandafter\let\csname ver@amsthm.sty\endcsname\relax

\usepackage{tikz}

\usepackage{amsmath}
\usepackage{amssymb}
\usepackage{mathdots}
\usepackage{mathtools}

\usepackage{dsfont}
\usepackage{chngpage}

\usepackage{hyperref}
\usepackage{amsthm}
\usepackage[capitalize,noabbrev]{cleveref}

\allowdisplaybreaks

\numberwithin{equation}{section}

\newtheorem{thm}{Theorem}[section]
\newtheorem{lemma}[thm]{Lemma}
\newtheorem{cor}[thm]{Corollary}

\newtheorem{Example}[thm]{Example}

\newtheorem{Remark}[thm]{Remark}

\crefname{thm}{Theorem}{Theorems}
\crefname{lemma}{Lemma}{Lemmas}
\crefname{cor}{Corollary}{Corollaries}
\crefname{prop}{Proposition}{Propositions}

\crefname{example}{Example}{Examples}
\crefname{remark}{Remark}{Remarks}

\newcommand{\emailhref}[1]{\email{\href{#1}{#1}}}

\newcommand{\Z}{\mathbb{Z}}

\newcommand{\M}{\operatorname{M}}

\newcommand{\st}{\operatorname{s}}
\newcommand{\en}{\operatorname{e}}


\title[Squarishness, bijections and independence]{Perfect matchings and spanning trees: squarishness, bijections and independence}

\author[Seok Hyun Byun]{Seok Hyun Byun}\emailhref{sbyun@clemson.edu}
\address{School of Mathematical and Statistical Sciences, Clemson University, Clemson, South Carolina 29631, U.S.A.}

\author[Mihai Ciucu]{Mihai Ciucu}\emailhref{mciucu@indiana.edu}
\address{Department of Mathematics, Indiana University, Bloomington, Indiana 47405, U.S.A.}
\thanks{S.H.B. is supported by the AMS-Simons Travel Grant.}
\thanks{M.C. was supported in part by Simons Foundation Collaboration Grant 710477.}

\begin{document}
\maketitle

\begin{abstract}
  A number which is either the square of an integer or two times the square of an integer is called squarish. There are two main results in the literature on graphs whose number of perfect matchings is squarish: one due to Jockusch (for planar graphs invariant under rotation by 90 degrees) and the other due to the second author (concerning planar graphs with two perpendicular symmetry axes). We present in this paper a new such class, consisting of certain planar graphs that are only required to have one symmetry axis. Our proof relies on a natural bijection between the set of perfect matchings of two closely related (but not isomorphic!) families of graphs, which is interesting in its own right. The rephrasing of this bijection in terms of spanning trees turns out to be the most natural way to present this result.

  The basic move in the construction of the above bijection (which we call gliding) can also be used to extend Temperley's classical bijection between spanning trees of a planar graph and perfect matchings of a closely related graph. We present this, and as an application we answer an open question posed by Corteel, Huang and Krattenthaler.

  We also discuss another dimer bijection (used in the proof of the second author's result mentioned above), and deduce from a refinement of it new results for spanning trees. These include a finitary version of an independence result for the uniform spanning tree on $\Z^2$ due to Johansson, a counterpart of it, and a bijective proof of an independence result on edge inclusions in the uniform spanning tree on $\Z^2$ due to~Lyons.
\end{abstract}

\section{Introduction}

Kasteleyn~\cite{kasteleyn1961statistics} and independently Temperley and Fisher~\cite{fisher1961statistical, temperley1961dimer} found a formula that gives the number of perfect matchings\footnote{A perfect matching of a graph is a collection of edges that covers every vertex exactly once.} of a rectangular grid graph. Namely, for any positive integers $m$ and $n$ they showed that the number of perfect matchings of a $2m\times 2n$ grid graph is given by the double product formula

\begin{equation*}
    2^{2mn}\prod^{m}_{j=1}\prod^{n}_{k=1}\left[\cos^{2}\bigg(\frac{j\pi}{2m+1}\bigg)+\cos^{2}\bigg(\frac{k\pi}{2n+1}\bigg)\right].
\end{equation*}

One interesting fact about this formula, which is not obvious from the formula itself, is that when $m=n$ (i.e. the graph is a square grid of even side), the expression is always either the square or two times the square of an integer (throughout this paper, we call such numbers \textit{squarish}). This was first proved by Montroll using linear algebra (see \cite{lovasz2007combinatorial} for an exposition). Later, Jockusch~\cite{jockusch1994perfect} and the second author~\cite{ciucu1997enumeration} of this paper independently gave combinatorial explanations by showing that large classes of graphs with certain symmetries share the same property. More precisely, Jockusch~\cite{jockusch1994perfect} proved this for a large class of graphs invariant under rotation by $90^{\circ}$. On the other hand, as an application of the factorization theorem for perfect matchings ~\cite[Theorem 2.1]{ciucu1997enumeration}, the second author proved it for bipartite planar graphs invariant under reflection and rotation by $180^{\circ}$ (equivalently, under reflection across two perpendicular symmetry axes).


\begin{figure}[t]
\vskip0.2in
\centerline{
\hfill
{\includegraphics[width=0.8\textwidth]{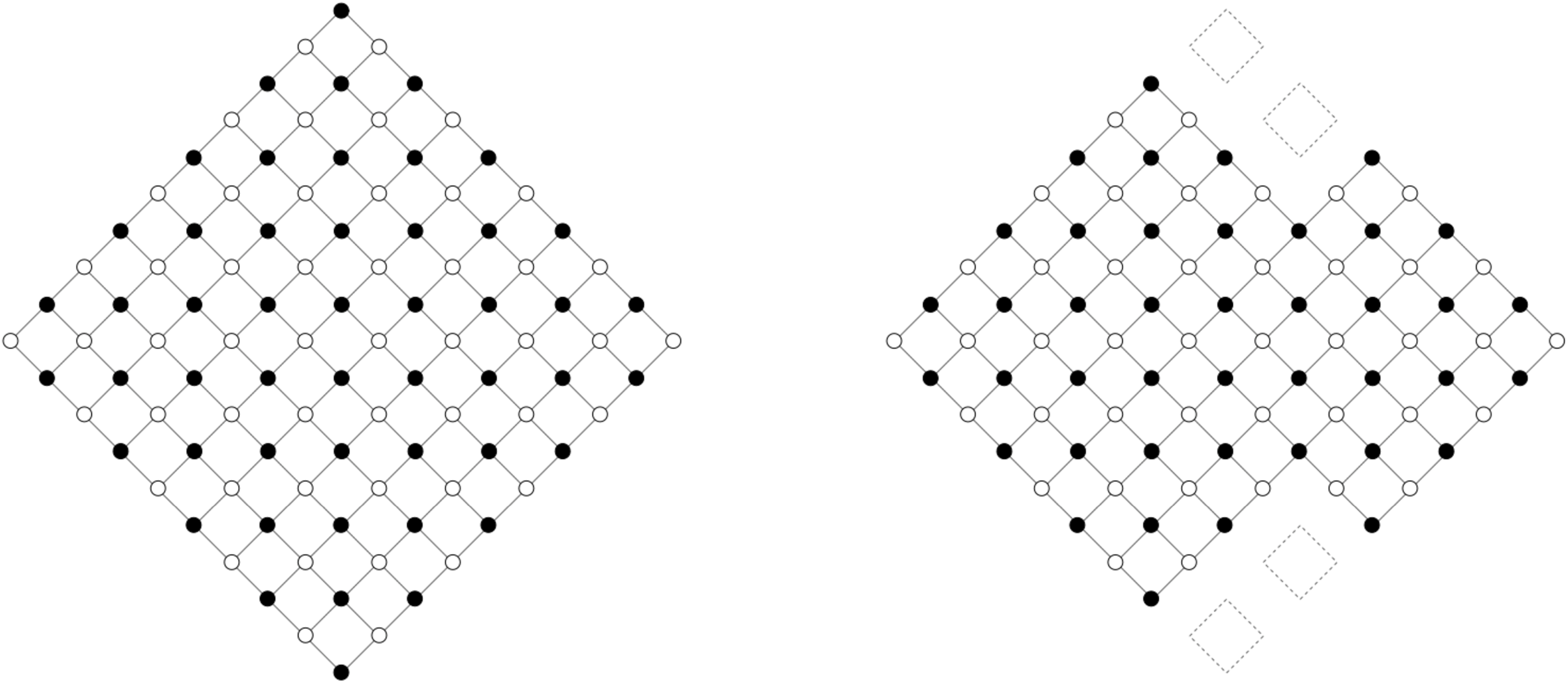}}
\hfill
}
\caption{\label{faa} The square grid graph $S_{10}$ (left) and the same graph with two symmetric pairs of four-cycles removed from opposite corners (right).}
\end{figure}

The starting point of this paper was the following observation. Let $S_{2n}$ be the $2n\times2n$ grid graph (see the picture on the left in Figure \ref{faa} for an example),
and draw it so that its diagonal $d$ is horizontal. Then, in the portion strictly above $d$, remove recursively quadruples of vertices (and all edges incident to them) that form a four-cycle, with the only requirement that one of the vertices from each removed quadruple is a ``peak'' on the boundary of the graph left over at that stage (one possible result is the top portion of the graph shown on the right in Figure \ref{faa}). 
We then remove also the reflections across $d$ of the four-cycles removed above, so that the resulting graph is symmetric about $d$ (an example is shown in the picture on the right in Figure~\ref{faa}). Numerical data suggests that the number of perfect matchings of the resulting graph is always squarish, regardless of the number of four-cycles we removed. Note that the previous results of Jockusch and the second author cannot explain this observation, because of the lack of sufficient symmetries of the graph.

The present paper consists of three inter-related parts.
In the first part, consisting of Sections 2-4, we introduce a new family of reflectively symmetric graphs (which includes the graphs described in the previous paragraph) and show that the number of perfect matchings of each of them is squarish (see Theorem \ref{tbb}). The proof is based on the factorization theorem for perfect matchings~(see~\cite[Theorem 2.1]{ciucu1997enumeration}) and a bijection between the perfect matchings of two families of closely related (but non-isomorphic!) graphs (see Theorem \ref{tba}). Using Temperley's correspondence between perfect matchings and spanning trees \cite{Temperley,lovasz2007combinatorial}, it turns out that our bijection from Theorem \ref{tba} finds its most natural phrasing in terms of spanning trees (see Theorem \ref{tbc}).

The second part, presented in Section 5, gives an extension of Temperley's above-mentioned correspondence (see Theorem \ref{tec}). A consequence of it, which is interesting in itself, is that certain collections of plane graphs with consecutive runs of vertices removed from their unbounded face have the same number of perfect matchings (this is phrased as Theorem \ref{tea}). As an application, we use this to answer an open question posed recently by Corteel, Huang and Krattenthaler \cite{Corteeletal2023AztecT1} (see Corollary \ref{teb}).

In the third part (which is presented in Section 6), motivated by the application of our perfect matching bijection from Theorem \ref{tba} to spanning trees, we also recall an earlier perfect matching bijection, due to the second author (see \cite[Lemma 1.1]{ciucu1997enumeration}), and deduce from a refinement of it (see Theorem \ref{tcb}) new results for spanning trees. These include a finitary version of an independence result for the uniform spanning tree on $\Z^2$ due to Johansson (presented in Theorem \ref{tcd}), a new counterpart of it (see Theorem \ref{tce}), and a bijective proof of an independence result on edge inclusions in the uniform spanning tree on $\Z^2$ due to~Lyons (see the proof of Theorem \ref{tcf}).





We finish the paper with some concluding remarks.


\section{The squarishness theorem and two related results} \label{sec:log-concav}

Recall that for any plane graph\footnote{ Throughout this paper, the graphs we consider have no multiple edges and no loops.
For the construction of the dual refinement $H_G$, the plane graph $G$ is assumed to have no vertex of degree one inside any bounded face.} $G$, its planar dual is the graph $G^*$ whose vertices are the bounded faces of $G$, with an edge connecting two vertices precisely if the corresponding faces of $G$ share an edge.

There is a natural way to form a graph $H_G$ --- which we will call {\it the dual refinement of $G$} --- by superimposing $G$ and its planar dual $G^*$. Namely, the vertex set of $H_G$ consists of the vertices of~$G$ (we will call them {\it original vertices}), the vertices of $G^*$ (called {\it face-vertices}), and the midpoints of the edges of $G$ (called {\it edge-vertices}). For each edge $e$ of $G$, connect the corresponding edge-vertex of $H_G$ to the two original vertices that are the endpoints of $e$, and also to the face-vertices corresponding to faces of $G$ that contain~$e$. This is the edge set of $H_G$. An example is shown in Figure \ref{fba}; the picture on the right shows the dual refinement of the graph shown on the left.

\begin{figure}[t]
    \centering
    \includegraphics[width=.92\textwidth]{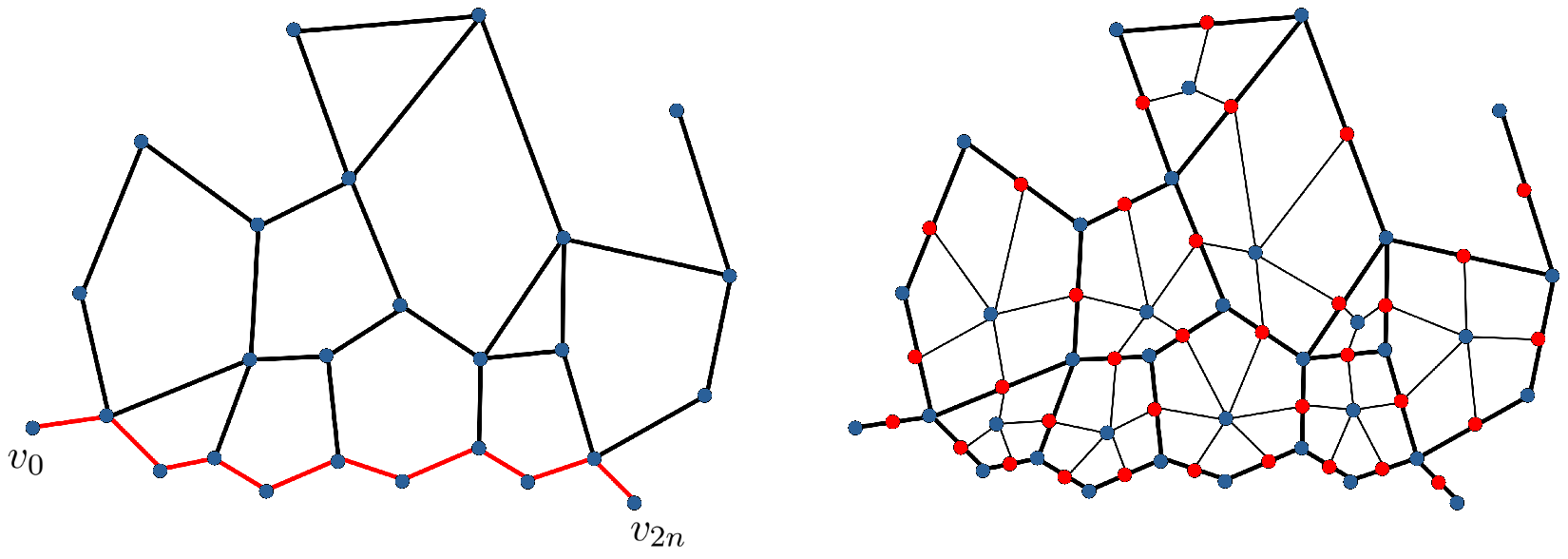}
    \caption{A plane graph $G$ (left) and its dual refinement $H_G$ (right).}
    \label{fba}
\end{figure}

Let $G_0$ be a plane graph, and let $B=v_1v_2\cdots v_{2n-1}$ be a path on the boundary of its unbounded face (we will also refer to the latter as the {\it infinite face of $G$}). Assume that all the even-indexed vertices $v_{2i}$ have degree two (i.e., there are no edges incident to them besides the edges of $B$). Denote by $G$ the graph obtained from $G_0$ by adding a leaf $v_0$ incident to $v_1$, and a leaf $v_{2n}$ incident to $v_{2n-1}$, both drawn on the infinite face (see Figure \ref{fba} for an example). Let $H_G$ be the
dual refinement of $G$
described in the previous paragraph (this is also illustrated in Figure \ref{fba}).

\begin{figure}[t]
    \centering
    \includegraphics[width=.92\textwidth]{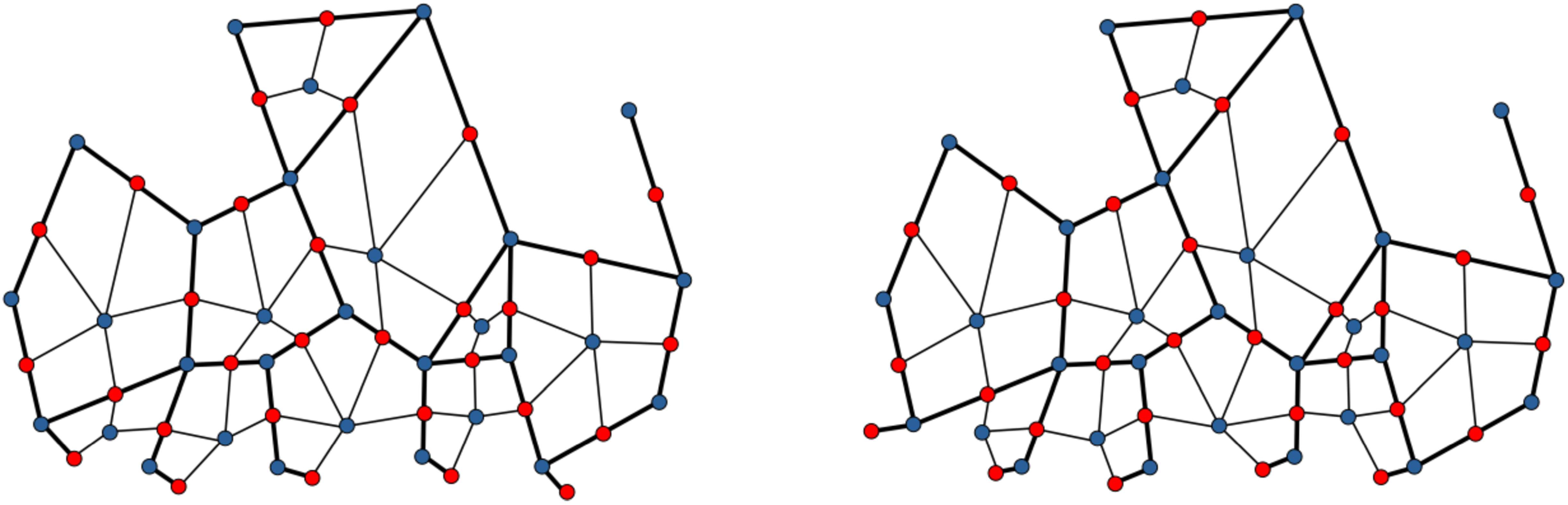}
    \caption{The graphs $G^{+}$ (left) and $G^{-}$ (right).}
    \label{fbb}
\end{figure}

\begin{figure}[t]
    \centering
    \includegraphics[width=.92\textwidth]{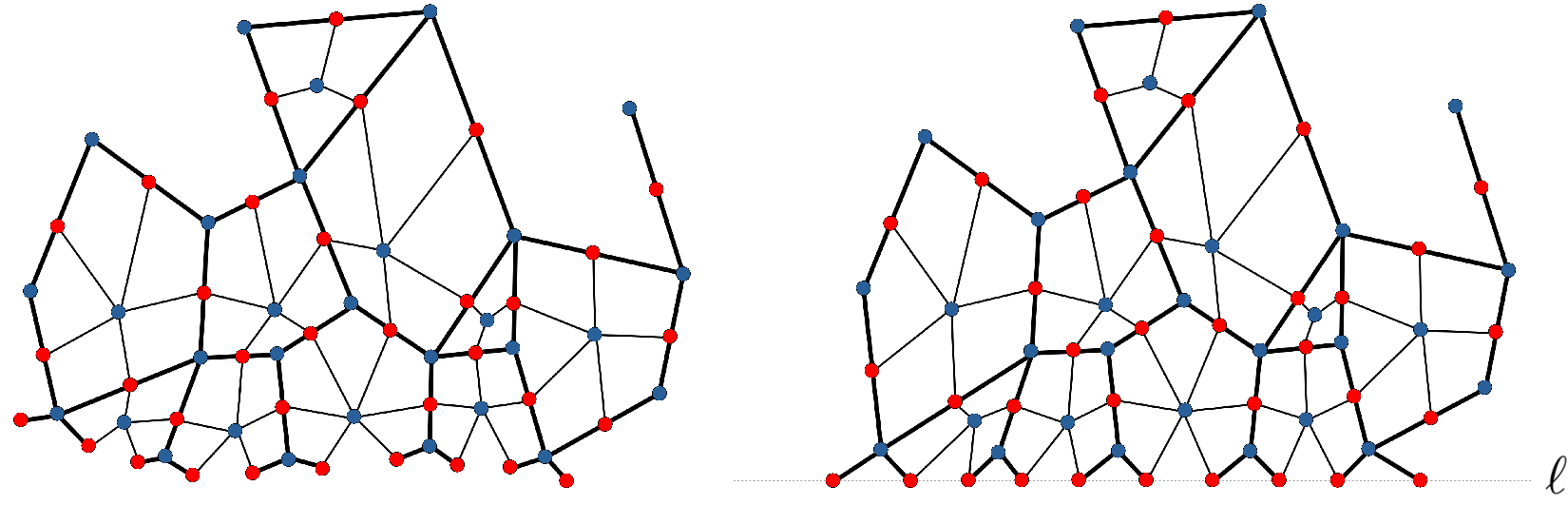}
    \caption{The graph $H_{G}\setminus\{v_{0},v_{2},\ldots,v_{2n}\}$ (left); the same graph with the $2n$ vertices $m_{1},\ldots,m_{2n}$ aligned (right).}
    \label{fbc}
\end{figure}

Let $G^+$ be graph obtained from $H_G\setminus v_{2n}$ by removing\footnote{It is a simple exercise to show that $H_G$ always has an odd number of vertices. We are interested in the perfect matchings of graphs obtained from $H_G$ by removing some of its vertices. In order for the resulting graphs to have perfect matchings, a necessary condition is to remove an odd number of vertices from $H_G$.} $v_0,v_2,\cdots,v_{2n-2}$ and their nearest neighbors towards the right on the path $B$ (see the picture on the left in Figure \ref{fbb}). Analogously, let $G^-$ be the graph obtained from $H_G\setminus v_{0}$ by removing $v_2,v_4,\cdots,v_{2n}$ and their nearest neighbors towards the left on the path $B$ (see the picture on the right in Figure \ref{fbb}; note that the nearest neighbors of $v_0,v_2,\cdots,v_{2n}$ are all edge-vertices).

We are now ready to state our first result, which will imply our squarishness theorem.

\begin{thm}
  \label{tba}
  There is a natural bijection between the sets of perfect matchings of $G^{+}$ and $G^{-}$. In particular\footnote{ $\M(G)$ denotes the number of perfect matchings of the graph $G$.}, $\M(G^{+})=\M(G^{-})$.
\end{thm}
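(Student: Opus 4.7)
The plan is to construct the bijection explicitly via a local ``gliding'' move along the path $B$, exploiting the observation that $G^+$ and $G^-$ arise from a common induced subgraph by deleting complementary sets of edge-vertices. Label the midpoint of the edge $v_{j-1}v_j$ by $m_j$ (for $1\le j\le 2n$), and let $H^*:=H_G\setminus\{v_0,v_2,\ldots,v_{2n}\}$; this is the graph depicted in Figure~\ref{fbc}. Then
\[
G^+=H^*\setminus\{m_1,m_3,\ldots,m_{2n-1}\},\qquad G^-=H^*\setminus\{m_2,m_4,\ldots,m_{2n}\},
\]
so $G^+$ and $G^-$ differ only along the ``top row'' --- the subgraph of $H^*$ spanned by the surviving $v_{2i-1}$, the edge-vertices $m_j$, and the face-vertices $f_j$ (one for each $j\in\{2,\ldots,2n-1\}$, namely the face-vertex of the bounded face containing the edge $v_{j-1}v_j$). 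In particular $m_{2n}$ is a leaf of $G^+$, forcing $v_{2n-1}m_{2n}$ into every perfect matching of $G^+$, and symmetrically $m_1$ is a leaf of $G^-$, forcing $v_1m_1$.

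Given a perfect matching $M^+$ of $G^+$, the gliding map produces $M^-$ as follows: leave every edge of $M^+$ not entirely contained in the top row unchanged, and re-route each top-row edge one step leftward, so that a matching edge of the form $v_{2i-1}m_{2i}$ is replaced by $v_{2i-1}m_{2i-1}$, and a matching edge of the form $m_{2i}f_{2i}$ is replaced by $m_{2i-1}f_{2i-1}$. Starting from the forced match $v_{2n-1}m_{2n}$, this re-routing propagates leftward through the alternating top-row structure and ultimately produces the forced match $v_1m_1$ of $M^-$. The reverse gliding move (starting from $v_1m_1$ and propagating rightward) furnishes the inverse, establishing the bijection.

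The main obstacle I anticipate is handling the degenerate configurations, in particular those in which neighbouring face-vertices coincide (for example, $f_{2i-1}=f_{2i}$ whenever $v_{2i-1}$ has degree two in $G_0$, so that a single face-vertex borders both adjacent $B$-edges) or in which the alternating top-row pattern in $M^+$ exits into $G_0\setminus B$ through an edge-vertex and re-enters later. A careful local case analysis resolves these. Alternatively --- and more transparently --- one can translate the entire statement, via Temperley's correspondence, into a pairing of spanning trees of $G_0$ with two complementary boundary conditions, in which the gliding move becomes a natural re-routing of a single path entering the tree from the boundary; this is precisely the content of Theorem~\ref{tbc}.
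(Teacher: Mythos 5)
Your setup is right --- viewing $G^+$ and $G^-$ as the common graph $H_G\setminus\{v_0,v_2,\dots,v_{2n}\}$ minus the odd-indexed, respectively even-indexed, $m_j$'s, and noting the forced edges $v_{2n-1}m_{2n}$ in $G^+$ and $v_1m_1$ in $G^-$ --- and the word ``gliding'' is the right word. But the core claim of your construction is false: the two matchings do \emph{not} differ only along a ``top row'' near $B$, and the re-routing is not local. In a generic perfect matching of $G^+$ an odd-indexed vertex $v_{2i-1}$ is matched to an edge-vertex lying on an edge of $G$ going into the interior of $G_0$, not to $m_{2i}$; the vertex $m_{2i}$ is then matched to a face-vertex $f$, and $f$ borders many edge-vertices deep inside the graph. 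Replacing $m_{2i}f_{2i}$ by $m_{2i-1}f_{2i-1}$ (with $f_{2i-1}\neq f_{2i}$) leaves $f_{2i}$ unmatched and double-covers $f_{2i-1}$, so the adjustment must propagate into the interior. This is not a degenerate case to be handled by ``careful local case analysis''; it is the generic situation, and resolving it is the entire content of the proof.

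What the paper actually does is build, for each $\mu\in{\mathcal M}(G^+)$, a family ${\mathcal P}_\mu$ of $n$ non-intersecting $\mu$-alternating paths joining the $m_{2i-1}$'s to the $m_{2j}$'s. These paths are constructed in successive ``generations'': odd generations glide along the frame (edges of $G$), even generations glide along the dual frame (edges of $G^*$), and each generation fills in the gaps left under the previous one. The paths wander arbitrarily far into the interior. Three nontrivial facts must be proved: each walk is a simple path, the walks terminate at even-indexed $m_j$'s, and the whole system is pairwise non-intersecting; all three rest on a parity lemma (Lemma~\ref{txx}: any cycle of $G$ encloses an odd number of vertices of $H_G$), which rules out self-intersections and crossings because either would trap an odd set of vertices that $\mu$ must match in pairs. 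The bijection is then the shift along all paths of ${\mathcal P}_\mu$ at once. None of this machinery appears in your proposal. Finally, your fallback via Theorem~\ref{tbc} is circular: in the paper, Theorem~\ref{tbc} is \emph{deduced from} Theorem~\ref{tba} via Temperley's correspondence, and the spanning-tree re-rooting you would need is again exactly the gliding-path construction.
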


Our family of graphs whose number of perfect matchings is squarish is obtained from $H_G$ as follows. Consider the graph $H_G\setminus\{v_0,v_2\dotsc,v_{2n}\}$ (see the picture on the left in Figure \ref{fbc}), and draw it so that its bottom vertices $($which are all edge-vertices, $2n$ in total$)$ are along a horizontal line $\ell$ (but none of its edges are along $\ell$); this is illustrated on the right in Figure \ref{fbc}.

\begin{thm}
\label{tbb} Let $\overline{G}$ be the graph obtained from $H_G\setminus\{v_0,v_2,\dotsc,v_{2n}\}$ by symmetrizing it across~$\ell$: the vertex set of $\overline{G}$ consists of the vertices of $G$ and their mirror images across $\ell$, and analogously for the edge set of $\overline{G}$ $($see the picture on the left in Figure $\ref{fbd}$ for an example$)$. Then $\M(\overline{G})$ is squarish --- i.e.\ it is either a perfect square, or two times a perfect square.
\end{thm}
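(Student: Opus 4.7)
The plan is to combine Theorem \ref{tba} with the factorization theorem for perfect matchings of symmetric bipartite planar graphs (\cite[Theorem 2.1]{ciucu1997enumeration}). First observe that $\overline{G}$ is a bipartite plane graph: $H_G$ is bipartite with edge-vertices forming one color class and original/face-vertices forming the other, the removal of the vertices $v_0,v_2,\ldots,v_{2n}$ preserves this, and reflecting across $\ell$ yields another bipartite graph. Moreover, $\overline{G}$ is symmetric about $\ell$, and all $2n$ vertices lying on $\ell$ are edge-vertices of $H_G$, so they all belong to the same color class of the bipartition. This is precisely the hypothesis needed for the factorization theorem to apply.

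The factorization theorem then expresses $\M(\overline{G})$ in the form
\begin{equation*}
    \M(\overline{G}) \;=\; 2^{c}\,\M(\overline{G}^{+})\,\M(\overline{G}^{-}),
\end{equation*}
where $c$ depends on the number of axis vertices (in this setup one expects $c=n$), and $\overline{G}^{+},\overline{G}^{-}$ are the two ``halved'' graphs obtained by cutting $\overline{G}$ along $\ell$ and modifying the boundary on $\ell$ according to the prescription of \cite[Theorem 2.1]{ciucu1997enumeration}. The next step is to verify that $\overline{G}^{+}$ and $\overline{G}^{-}$ are, up to planar isotopy, the graphs $G^{+}$ and $G^{-}$ defined in Section~\ref{sec:log-concav} (or at least that their perfect matching counts agree). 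This is plausible because the alternating prescription defining $G^{\pm}$ (removing $v_{2i}$ together with its nearest neighbor on $B$ in a specified direction) matches exactly the alternating behavior on the axis prescribed by the cutting rule of the factorization theorem; the vertices $v_0,v_2,\ldots,v_{2n}$ and the adjacent edge-vertices play the role of the axis boundary.

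Once this identification is in place, Theorem \ref{tba} supplies the crucial equality $\M(G^{+})=\M(G^{-})$, yielding
\begin{equation*}
    \M(\overline{G}) \;=\; 2^{c}\,\M(G^{+})^{2}.
\end{equation*}
If $c$ is even, the right side equals $\bigl(2^{c/2}\M(G^{+})\bigr)^{2}$, a perfect square; if $c$ is odd, it equals $2\bigl(2^{(c-1)/2}\M(G^{+})\bigr)^{2}$, twice a perfect square. In either case $\M(\overline{G})$ is squarish.

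The main obstacle, and the step that requires real care, is the second one: matching the halved graphs $\overline{G}^{\pm}$ produced by the factorization theorem with the graphs $G^{\pm}$ of Theorem \ref{tba}, and pinning down the exponent $c$. This is essentially a bookkeeping task about how the cutting-and-weighting procedure of the factorization theorem interacts with the local structure of $H_G$ near $\ell$; in particular one needs to check that the leaves $v_0,v_{2n}$ and the ``missing'' vertices $v_2,\ldots,v_{2n-2}$ correspond precisely to the alternating deletions dictated by the factorization theorem on each side of $\ell$. Once this is done, the squarishness conclusion follows automatically from the parity dichotomy above, with no further calculation required.
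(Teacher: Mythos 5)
Your proposal is correct and follows essentially the same route as the paper: apply the factorization theorem of \cite[Theorem 2.1]{ciucu1997enumeration} to the symmetric bipartite graph $\overline{G}$ (noting that all $2n$ axis vertices are edge-vertices, hence monochromatic), identify the two halves with $G^{+}$ and (the reflection of) $G^{-}$, and invoke Theorem \ref{tba} to get $\M(\overline{G})=2^{n}\M(G^{+})^{2}$. The paper pins down the exponent as exactly $n$, but as you observe the squarishness conclusion is insensitive to its parity.
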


\begin{figure}
    \centering
    \includegraphics[width=.9\textwidth]{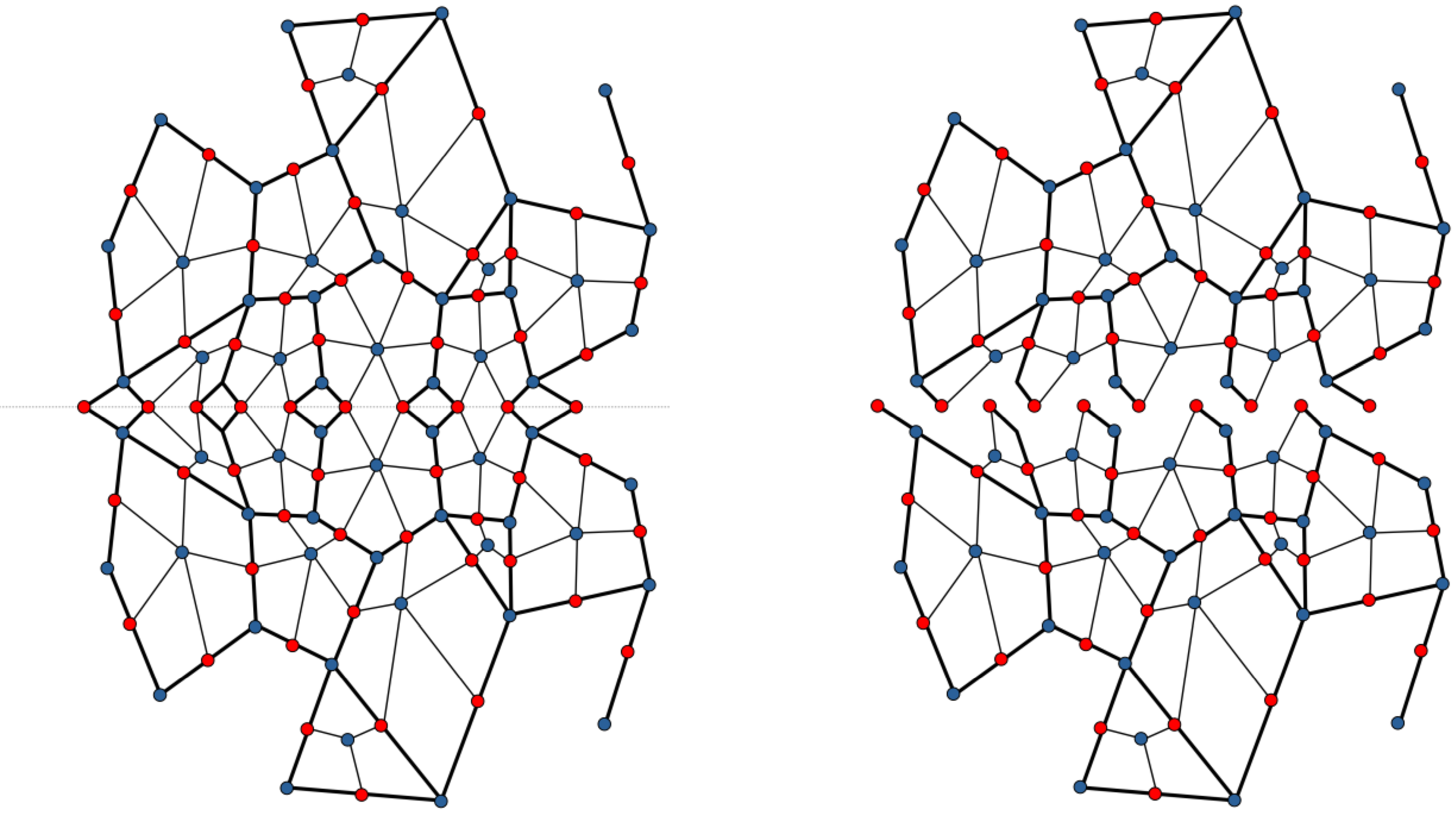}
    \caption{The graph $\overline{G}$ (left) and a decomposition of $\overline{G}$ obtained by applying the factorization theorem for matchings (right).}
    \label{fbd}
\end{figure}

\begin{proof}
  By construction, the graph $\overline{G}$ is bipartite and symmetric with respect to $\ell$. Therefore the factorization theorem for perfect matchings \cite[Theorem 2.1]{ciucu1997enumeration} can be applied to it. Since there are~$2n$ vertices on the symmetry axis, we obtain
\begin{equation}
\M(\overline{G})=2^n\M(G_1)\M(G_2),
\label{eba}
\end{equation}
where $G_1$ and $G_2$ are the portions of $\overline{G}$ above and below $\ell$ left over after deleting the edges prescribed by the factorization theorem. Since all the vertices on $\ell$ have the same color, this results in deleting alternately all the edges incident from above and all the edges incident from below, as we scan the vertices on $\ell$ from left to right. One readily sees that this implies that $G_1$ is isomorphic to our graph~$G^+$, and $G_2$ is isomorphic to the reflection across $\ell$ of $G^-$ (this can be clearly seen by comparing the pictures on the right in Figure \ref{fbd} with those in Figure \ref{fbb}). Thus equation \eqref{eba} becomes $\M(\overline{G})=2^n\M(G^+)\M(G^-)$, and the statement follows by Theorem \ref{tba}. \end{proof}

{\it Remark $1$.}
A ``leafless,'' equivalent version of Theorem \ref{tba} can readily be stated, by simply removing the leaves $v_0$ and $v_{2n}$ from $G$, and removing the forced edges from $G^+$ and $G^-$. The version stated in Theorem \ref{tba} has the advantage that symmetrizing $G\setminus\{v_0,v_2,\dotsc,v_{2n}\}$ yields a graph $\overline{G}$ which is balanced (i.e.\ has the same number of vertices in the two bipartition classes), a necessary condition for admitting perfect matchings.

\medskip
Theorem \ref{tba} and its proof give a natural bijection between the sets of perfect matchings of two graphs, $G^+$ and $G^-$, whose construction is somewhat involved. However, it turns out that the result can be equivalently rephrased in terms of spanning trees, and this rephrasing is quite natural.

\begin{thm}
\label{tbc}
Let $G$ be a plane graph and let $v_1v_2\dotsc v_{2n-1}$ be a path on the boundary of its unbounded face. Assume that vertices $v_2,v_4,\dotsc,v_{2n-2}$ have degree two. Then there is a natural bijection between the spanning trees of $G$ rooted at $v_{2n-1}$ which contain the oriented edges $(v_2,v_3),(v_4,v_5),\dotsc,(v_{2n-2},v_{2n-1})$, and the spanning trees of $G$ rooted at $v_{1}$ which contain the oriented edges $(v_2,v_1),(v_4,v_3),\dotsc,(v_{2n-2},v_{2n-3})$.

\end{thm}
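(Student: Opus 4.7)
\emph{Proof plan.} I plan to translate both sides of the claimed bijection into perfect matchings via Temperley's correspondence and then invoke Theorem~\ref{tba}. To align notation, when applying Theorem~\ref{tba} I take its plane graph $G_0$ to be the present $G$, so that the auxiliary graphs $G^{+}$ and $G^{-}$ of Section~\ref{sec:log-concav} are built from $G$ together with two added leaves $v_0,v_{2n}$. Recall Temperley's bijection in the convention used here: for a plane graph $G$ with a marked boundary vertex $r$, the spanning trees of $G$ rooted at $r$ are in bijection with the perfect matchings of $H_G\setminus\{r\}$; in the correspondence, each non-root vertex $v$ is matched with the edge-vertex $m_e$, where $e$ is the edge from $v$ to its parent in the rooted tree.

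Because each $v_{2i}$ ($i=1,\dots,n-1$) has degree $2$ in $G$ with only $v_{2i-1}$ and $v_{2i+1}$ as neighbors, requiring the oriented edge $(v_{2i},v_{2i+1})$ to lie in a tree rooted at $v_{2n-1}$ is equivalent to requiring $v_{2i+1}$ to be the parent of $v_{2i}$, which in turn amounts to the forced local match $v_{2i}\leftrightarrow m_{v_{2i}v_{2i+1}}$ in the Temperley matching. Therefore rooted spanning trees of $G$ at $v_{2n-1}$ containing the prescribed forced edges correspond bijectively to perfect matchings of
\[
H_G\setminus\bigl(\{v_{2n-1}\}\cup\{v_2,v_4,\dots,v_{2n-2}\}\cup\{m_{v_2v_3},m_{v_4v_5},\dots,m_{v_{2n-2}v_{2n-1}}\}\bigr),
\]
obtained from the Temperley output by removing the forced pairs. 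A short check identifies this matching problem with that of $G^{+}$, up to matching-neutral discrepancies arising from the leaf construction: in $G^{+}$, the pendant edge-vertex $m_{v_{2n-1}v_{2n}}$ has degree one (since $v_{2n}$ is removed) and forces $v_{2n-1}$ to be matched with it, while the removal of the pair $\{v_0,m_{v_0v_1}\}$ contributes no free choice.

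The analogous argument identifies rooted spanning trees of $G$ at $v_1$ containing the edges $(v_2,v_1),\dots,(v_{2n-2},v_{2n-3})$ with perfect matchings of $G^{-}$, and composing the two Temperley translations with the bijection of Theorem~\ref{tba} delivers the desired bijection of spanning trees. The main obstacle is the careful bookkeeping needed to verify the precise equivalence between the Temperley-graph and $G^{\pm}$; once the forced pendant matches at $v_0,v_{2n}$ and the root are correctly aligned, the bijection becomes natural and inherits the geometric symmetry underlying Theorem~\ref{tba}.
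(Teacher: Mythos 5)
Your proposal is correct and follows essentially the same route as the paper: both translate the two families of rooted spanning trees into perfect matchings via Temperley's correspondence (with the forced oriented edges becoming forced half-edge matches as in Corollary~\ref{tdb}), identify the resulting matching problems with those of $G^{+}$ and $G^{-}$, and conclude by Theorem~\ref{tba}. The only cosmetic difference is that the paper first adjoins the leaves $v_0,v_{2n}$ and applies Temperley to the augmented graph rooted at $v_{2n}$ (resp.\ $v_0$), whereas you apply Temperley to $G$ directly and absorb the leaves afterwards as matching-neutral forced pairs; your bookkeeping of these pendant forcings is accurate.
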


The above result can be deduced from Theorem \ref{tba} using Temperley's bijection \cite{Temperley,lovasz2007combinatorial} between spanning trees of a plane graph $G$ rooted at a vertex $v$ on its infinite face and perfect matchings of the graph $H_G\setminus v$. The details are given in Section 6.

\section{The bijection $\phi:{\mathcal M}(G^+)\to{\mathcal M}(G^-)$} 

Let ${\mathcal M}(G^+)$ and ${\mathcal M}(G^-)$ be the sets of perfect matchings of the graphs $G^+$ and $G^-$, respectively.
In this section we construct a map $\phi:{\mathcal M}(G^+)\to{\mathcal M}(G^-)$ and prove that this map is well-defined and bijective, thus obtaining the proof of Theorem \ref{tba}.

The construction of the map $\phi$ is based on a certain family of non-intersecting paths $\mathcal P_\mu$ on $H_G\setminus\{v_0,v_2,\dotsc,v_{2n}\}$ that we can naturally associate to any perfect matching $\mu$ of $G^+$.

Label the edge-vertices on the bottom path $B=v_0v_1\dotsc v_{2n}$ of $G$ by $m_1,m_2,\dotsc,m_{2n}$, from left to right. Note that $G^+$ can be viewed as $H_G\setminus\{v_0,v_2,\dotsc,v_{2n}\}$ with vertices $m_1,m_3,\dotsc,m_{2n-1}$ removed, and $G^-$ can be viewed as $H_G\setminus\{v_0,v_2,\dotsc,v_{2n}\}$ with vertices $m_2,m_4,\dotsc,m_{2n}$ removed.

Let $\mu$ be a perfect matching of $G^+$. The family $\mathcal P_\mu$ will consist of $n$ non-intersecting paths on $H_G\setminus\{v_0,v_2,\dotsc,v_{2n}\}$, each connecting an $m_{2i-1}$ to an $m_{2j}$, so that all of the vertices $m_1,m_2,\dotsc,m_{2n}$ are connected up in pairs. Since $G^+$ is just $H_G\setminus\{v_0,v_2,\dotsc,v_{2n}\}$ with vertices $m_1,m_3,\dotsc,m_{2n-1}$ removed, the vertices $m_1,m_3,\dotsc,m_{2n-1}$ --- which are endpoints of distinct paths in ${\mathcal P}_\mu$ --- are not matched by $\mu$; on the other hand, the other endpoint of each of these paths --- which form the set $\{m_2,m_4,\dotsc,m_{2n}\}$ --- {\it are} matched by $\mu$.

As we will see in the paragraphs below, the paths comprising $\mathcal P_\mu$ are {\it alternating paths} with respect to $\mu$, i.e.\ their edges alternate between being contained and not being contained in $\mu$.
We define $\phi(\mu)$ to be the perfect matching of $G^-$ obtained from $\mu$ by ``shifting along the paths in ${\mathcal P}_\mu$'': along each path $P$ in ${\mathcal P}_\mu$, discard from $\mu$ the edges of $P$ that are in $\mu$, and include the edges of $P$ that are not in $\mu$; keep all the edges of $\mu$ that are not contained in any path in ${\mathcal P}_\mu$. Since the paths in $\mathcal P_\mu$ are non-intersecting, the resulting collection of edges is a perfect matching of $G^-$; we define $\phi(\mu)$ to be this perfect matching.

What remains is to define the family of non-intersecting paths $\mathcal P_\mu$ on $H_G\setminus\{v_0,v_2,\dotsc,v_{2n}\}$ corresponding to a given perfect matching $\mu$ of $G^+$.

The paths making up $\mathcal P_\mu$ are constructed in a sequence of generations of paths as follows. We call the union of the edges of the graph $G$ (viewed as sets of points in the plane) the {\it frame}, and the union of the edges of the planar dual $G^*$ the {\it dual frame}.

{\bf First generation.} To obtain the first path in the first generation, start on the left at $m_1$, and move one step
along the frame
(unique choice, because the construction takes place on the graph~$H_G\setminus\{v_0,v_2,\dotsc,v_{2n}\}$, which does not contain the other neighbor $v_0$ of $m_1$), to reach the original vertex $v_1$ (this and all the remaining part of the construction process can be followed on~Figure \ref{fpb}). Next, consider the edge $e$ of $\mu$ incident to $v_1$; note that $e$ is contained in the frame. Follow the edge~$e$ to the edge-vertex it leads to, and then ``glide'' on the frame --- i.e., continue along the edge $\overline{e}$ of $G$ containing $e$, arriving at the other endpoint of $\overline{e}$, which is also an original vertex, say $v_1'$ (this gliding operation will be the building block of our paths; it is illustrated in~Figure \ref{fpa}). From here, repeat the procedure: move along the frame to the edge-vertex to which $v_1'$ is matched under $\mu$, then glide along the frame until the next original vertex $v_1''$ is reached, and so on. We claim that this procedure can only stop when we reach an edge-vertex $m_{2i}$, and that the resulting walk $W$ is in fact a path on $H_G\setminus\{v_0,v_2,\dotsc,v_{2n}\}$ (i.e.\ all vertices of $W$ are distinct).

To see that $W$ is a path, note that by definition $W$ is contained in the frame. Then Lemma \ref{txx} implies that the original vertices $v_1,v_1',v_1'',\dotsc$ are all distinct. Indeed, otherwise they would give rise to a cycle $C$ in $G$, which by Lemma \ref{txx} has an odd number of vertices of $H_G$ in its interior, contradicting the fact that $\mu$ is a perfect matching of $G^+$. In turn, this implies that no edge-vertex on $W$ can be visited more than once (otherwise we would have to visit an original vertex on $W$ more than once). This proves that all vertices of $W$ are distinct, so $W$ is a path on $H_G\setminus\{v_0,v_2,\dotsc,v_{2n}\}$.

\begin{figure}
    \centering
    \includegraphics[width=.4\textwidth]{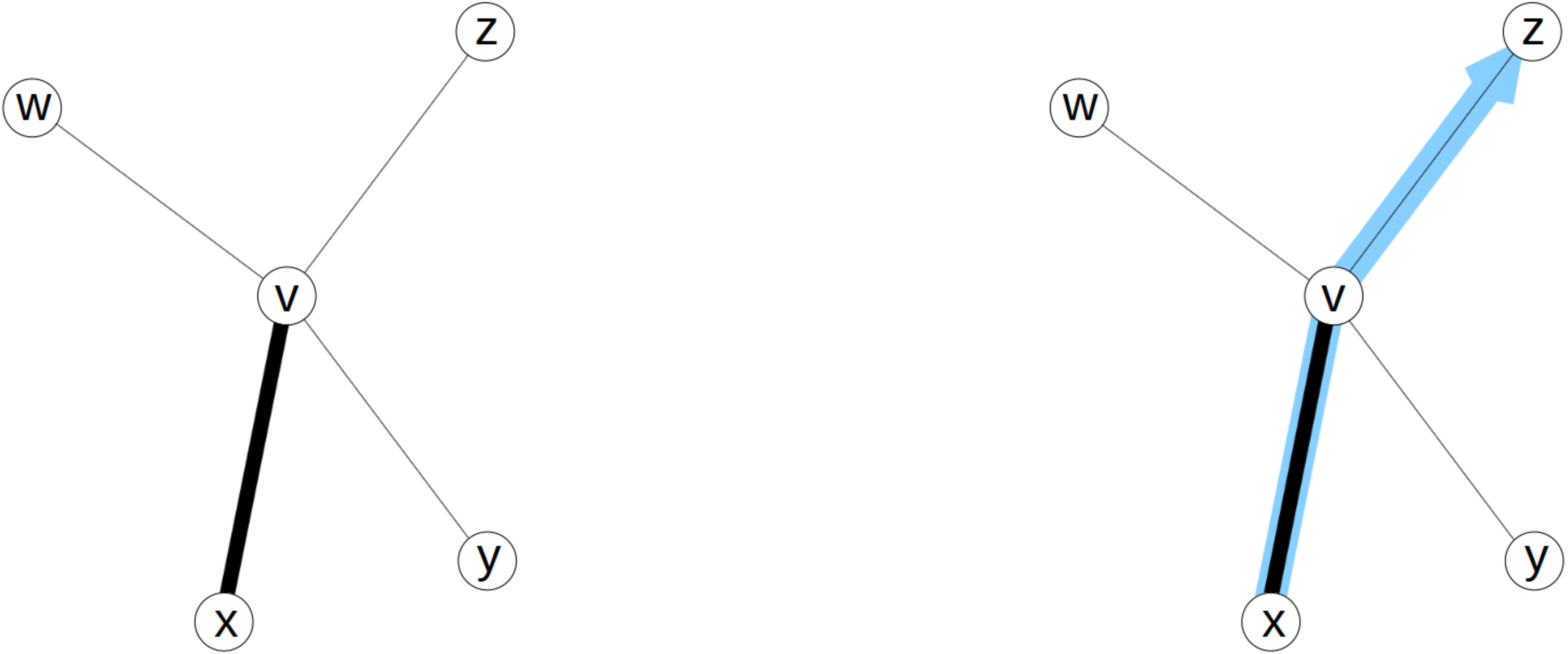}
    \caption{Gliding from $x$: Follow the edge of $\mu$ that matches $x$ (the solid line) to get to $v$; there are four edges incident to $v$; continue away from $v$ along the edge opposite the edge we followed to get to $v$ (this applies to gliding on both the frame and the dual frame). These gliding operations are the building blocks of the non-intersecting paths in ${\mathcal P}_\mu$.}
    \label{fpa}
\centerline{\includegraphics[width=0.6\textwidth]{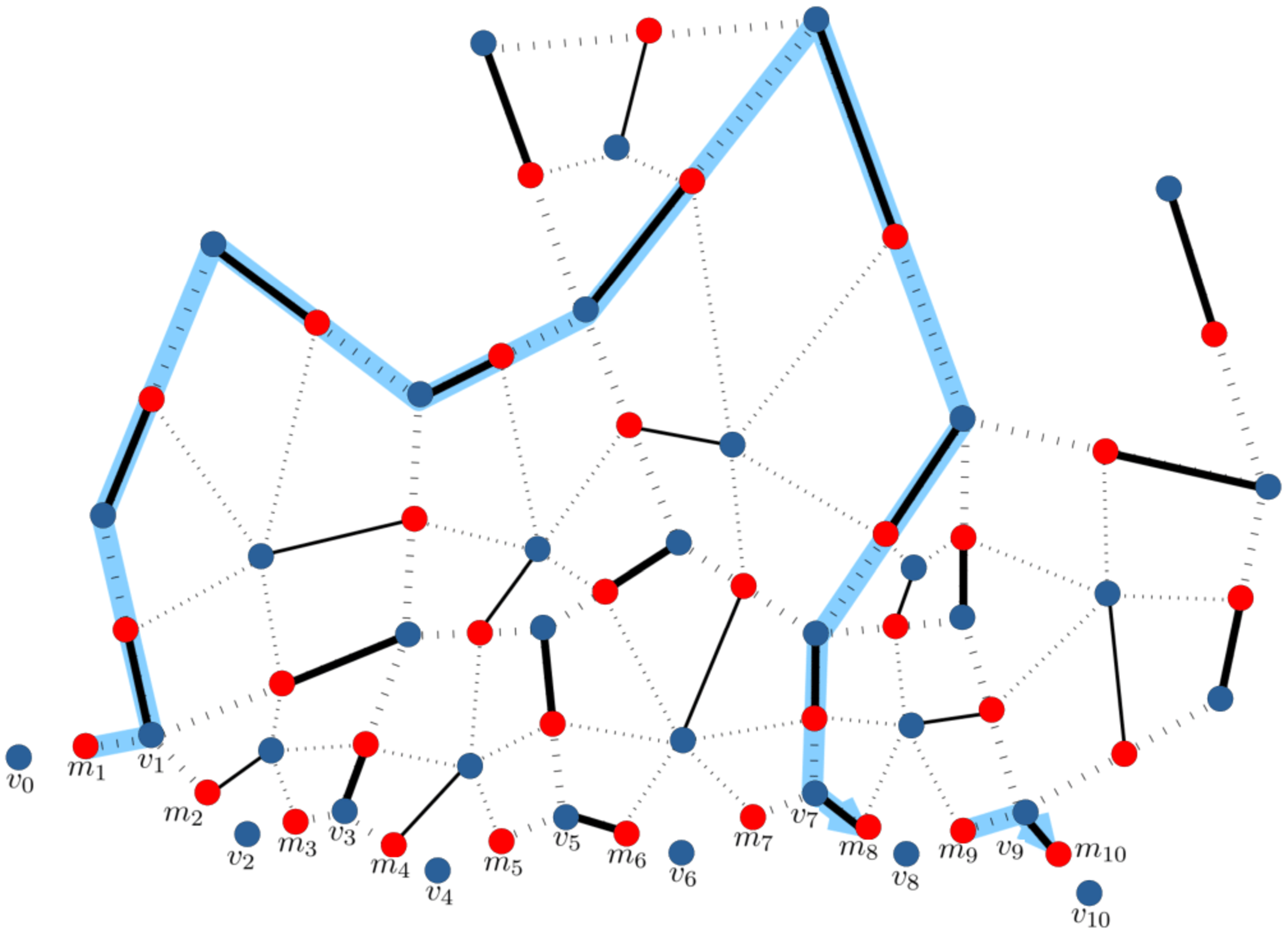}}
\centerline{\includegraphics[width=0.6\textwidth]{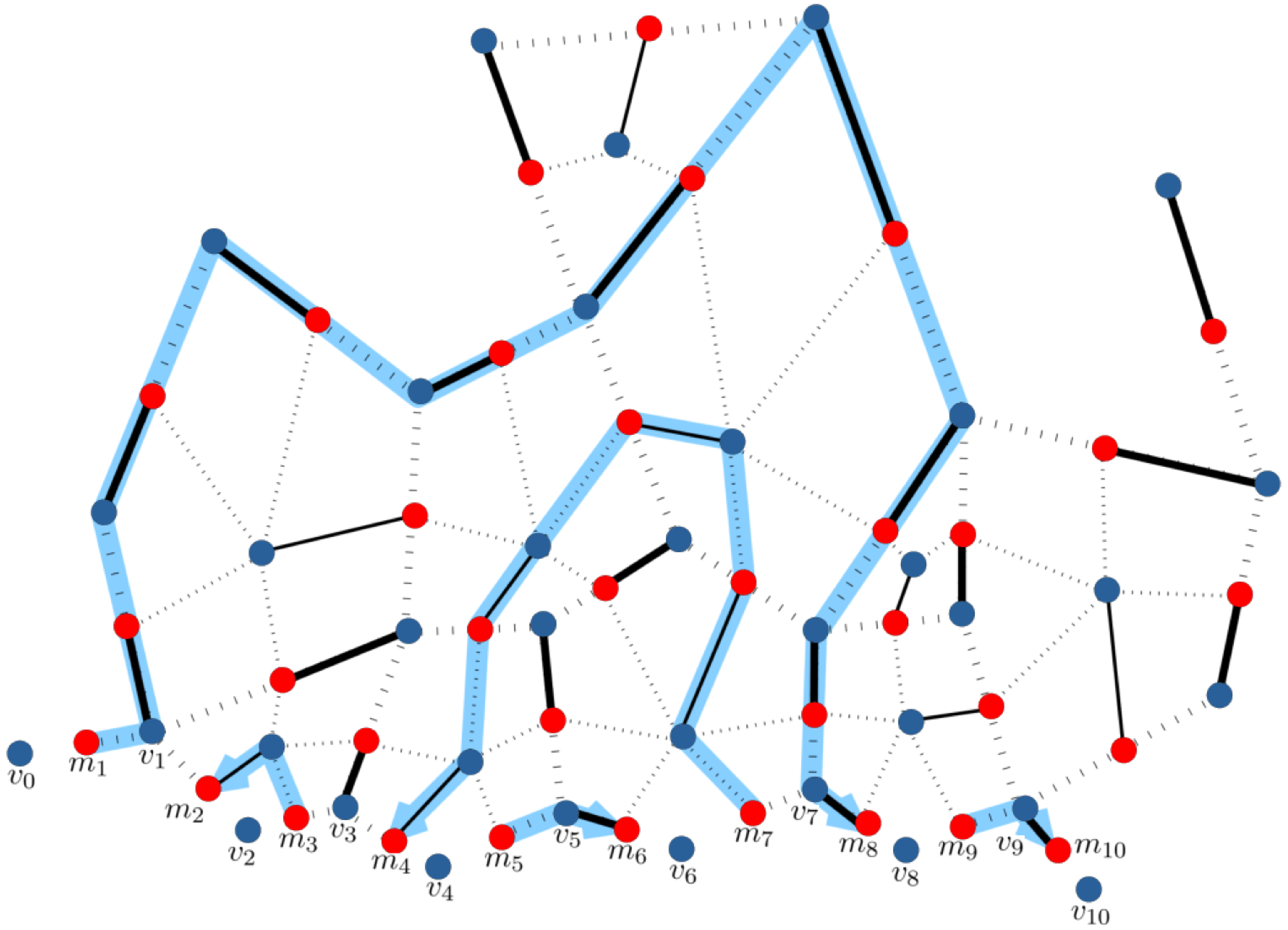}}
%
%
    \caption{{\it Top}.\ The first generation of paths. {\it Bottom}.\ The family of non-intersecting paths ${\mathcal P}_\mu$ (consisting of three generations in this example).}
    \label{fpb}
\end{figure}

To show that $W$ must end at an $m_{2i}$, let us see where could we get stuck as the path $W$ on $H_G\setminus\{v_0,v_2,\dotsc,v_{2n}\}$ is being constructed using the above-described procedure. We cannot end at an original vertex $v$ (because we follow the edge of $\mu$ incident to $v$), so we could only get stuck at an edge-vertex (since we stay on the frame). But that can only happen if we cannot glide further along on the frame from the edge-vertex we have just reached. In turn, this can only happen at an edge-vertex $m_j$ on the bottom path $B$, where gliding is blocked because the original vertex we would glide to is a $v_{2i}$, which does not belong to the graph $H_G\setminus\{v_0,v_2,\dotsc,v_{2n}\}$ on which our construction of $W$ takes place. Finally, notice that the edge-vertex $m_j$ at which we had to stop is incident to an edge of the perfect matching $\mu$ (for it was such an edge that led us to $m_j$). But only the even-indexed $m_j$'s are matched by $\mu$, so indeed we must have ended at an $m_{2i}$.




Suppose the first path in the first generation connects $m_1$ to $m_{2i}$. To obtain the second path in the first generation, apply the same procedure starting from $m_{2i+1}$ rather than from $m_1$. Repeat this until a path ends at $m_{2n}$. This is the first generation. Note that the pairs of starting and ending points of these paths are of the form $(m_1,m_{2i_1}),(m_{2i_1+1},m_{2i_2}),\dotsc,(m_{2i_k+1},m_{2n})$, with $1<i_1<\cdots<i_k<n$. We call such a system of non-intersecting paths a {\it comb}. We say that this comb {\it spans} the set $\{m_1,m_2,\dotsc,m_{2n}\}$.

It is not hard to see that all paths in the first generation are non-intersecting. Indeed, we can join together all the paths $P_1,\dotsc,P_k$ of this first generation  by concatenating consecutive ones via 2-step paths of the form $m_{2i}v_{2i}m_{2i+1}$. Since the resulting walk $P$ is on the frame, as seen in the third to last paragraph above, Lemma \ref{txx} implies that $P$ is in fact a path. This proves that all paths in the first generation are non-intersecting. 

{\bf Second generation.} To obtain the second generation of paths, proceed as follows. Under each of the paths $Q$ in the first generation, do the same construction as above, but with two modifications: (1) start from the {\it rightmost} $m_{j}$ not yet matched by our paths (which for the first path in this second generation is $m_{2i-1}$, if the path $Q$ ended at $m_{2i}$) and move one step on the {\it dual frame} (again unique choice, because the planar dual $G^*$ does not include the face-vertex corresponding to the unbounded face of $G$),
and (2) glide on the {\it dual frame} according to $\mu$. The same arguments we used above\footnote{ With Lemma \ref{txx} applied now on the planar dual graph $G^*$.} show that the paths in the second generation under $Q$ are non-intersecting and form a comb that spans the set of all $m_j$'s under $Q$. It is easy to see that each path under $Q$ in the second generation has no intersection with $Q$ (indeed, if they had a common vertex it would have to be an edge-vertex $v_e$, but then there would be two edges of $\mu$ incident to $v_e$, because $v_e$ being on $Q$ implies that an edge of $\mu$ took us to $v_e$ by gliding on the frame, while $v_e$ being on a path in the second generation implies that an edge of $\mu$ took us to $v_e$ by gliding on the dual frame). Since the paths in the first generation form a comb, the regions under different paths $Q$ are independent. Therefore, it follows that viewed together, the paths in the first and second generation are non-intersecting.

\begin{figure}
    \centering
    \includegraphics[width=.9\textwidth]{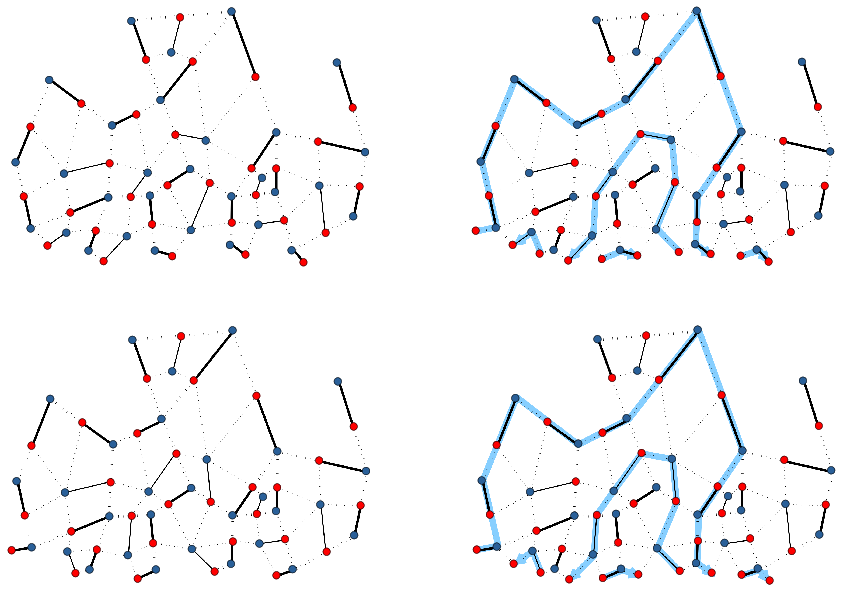}
    \caption{A perfect matching $\mu$ of $G^+$ (top left) and  the corresponding family of non-intersecting paths ${\mathcal P}_\mu$ (top right). Changing $\mu$ by shifting along the paths in~${\mathcal P}_\mu$ (bottom right), and the resulting perfect matching $\mu'=\phi(\mu)$ of $G^-$ (bottom left).}
    \label{fpc}
\end{figure}

{\bf Subsequent generations.} Subsequent generations of paths are defined following the same pattern. Namely, each subsequent odd generation is obtained by doing under each path of the generation before it exactly what we did when we constructed the first generation. Similarly, each subsequent even generation is obtained by doing under each path of the generation before it exactly what we did when we constructed the second generation. 

We continue this until all the $m_j$'s are connected up in pairs by $n$ paths. The arguments above show that these paths are non-intersecting, and connect the $m_{2i-1}$'s to the $m_{2j}$'s. We define this set of $n$ paths to be our family of non-intersecting paths ${\mathcal P}_\mu$ on $H_G\setminus\{v_0,v_2,\dotsc,v_{2n}\}$.

As explained in the fourth paragraph of this section, the map $\phi$ sends the perfect matching $\mu$ of $G^+$ to the perfect matching $\mu'$ of $G^-$ obtained from $\mu$ by shifting along the paths of $P_\mu$ (see Figure~\ref{fpc} for an illustration). This completes the definition of the map $\phi:{\mathcal M}(G^+)\to{\mathcal M}(G^-)$.

A completely analogous procedure yields a map $\psi:{\mathcal M}(G^-)\to{\mathcal M}(G^+)$, by simply swapping ``right'' and ``left'' in the above definition of $\phi$. It readily follows from our construction that if $\phi(\mu)=\mu'$, then the family of non-intersecting paths ${\mathcal P_\mu}$ along which we shift to obtain $\mu'$ from $\mu$ is the same as the family of non-intersecting paths ${\mathcal P_{\mu'}}$ along which we shift to obtain $\psi(\mu')$ from $\mu'$. Since shifting along the paths in ${\mathcal P_\mu}$ is an involution, this shows that $\psi(\phi(\mu))=\psi(\mu')=\mu$. One similarly sees that $\phi\circ\psi$ is also the identity. This implies that $\phi$ is a bijection, thus proving~Theorem~\ref{tba}.

\section{Proof of Theorem \ref{tbc}}

We first recall a very useful bijection (found by Temperley \cite{Temperley} for subgraphs of the grid graph, and generalized in Lov\'asz \cite[Problem 4.30(b)]{lovasz2007combinatorial} for arbitrary plane graphs; see also \cite{KPW}) between the spanning trees\footnote{ A spanning tree of a graph $G$ is a subgraph of $G$ which is a tree and contains all the vertices of $G$.} of a plane graph and perfect matchings of a closely related graph.

Given a spanning tree $T$ of a graph $G$ and a vertex $v$ of $G$, orient each edge of $T$ so that it points toward $v$ (more precisely, for each edge $e$ of $T$ consider the unique path $P$ in $T$ that connects the midpoint of $e$ to $v$; orient $e$ so that it points toward $v$ along the path $P$). We say that the obtained oriented tree is a {\it spanning tree of $G$ rooted at $v$}. It is not hard to see that, under this orientation, at each vertex of $T$ other than the root there is a unique outgoing edge.

\begin{thm}[\sc Temperley \cite{Temperley,lovasz2007combinatorial}]
\label{tda}
\ 
\newline
\phantom{aa}$({\rm a})$.\ Let $G$ be a plane graph and let $v$ be a vertex of $G$ on the unbounded face. Then there is a natural bijection between the spanning trees of $G$ rooted at $v$ and the perfect matchings of the graph $H_G\setminus v$, where $H_G$ is the dual refinement of $G$ defined at the beginning of Section~$2$.

$({\rm b})$.\ Assume $G$ has a weight function on its edges. This induces a weight on the edges of $H_G$ by weighting all the edges contained in edges of $G$ by the corresponding weights in $G$, and all the edges contained in edges of the planar dual $G^*$ by $1$. Then the bijection in part $({\rm a})$ is weight-preserving.

\end{thm}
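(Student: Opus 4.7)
The plan is to define Temperley's bijection explicitly, verify the inverse construction via a planar Euler-formula argument, and deduce part (b) from the edge-by-edge structure of the correspondence.

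For the forward direction, given a spanning tree $T$ of $G$ rooted at $v$, I would orient each edge of $T$ toward $v$ and match each non-root vertex $u$ to the midpoint of its unique outgoing tree edge $e_u$. This accounts for all $|V(G)|-1$ non-root original vertices and consumes exactly the midpoints of tree edges. For the face-vertices, I would invoke the classical fact that the edges dual to $E(G)\setminus T$ form a spanning tree $T^{*}$ of the augmented planar dual $\widetilde{G}^{*}$ obtained from $G^{*}$ by adjoining a vertex $f_\infty$ for the unbounded face. Rooting $T^{*}$ at $f_\infty$, each bounded-face vertex $f$ has a unique outgoing edge in $T^{*}$, and I would match $f$ to the midpoint of the corresponding non-tree edge of $G$. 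Every vertex of $H_G\setminus v$ is then matched exactly once, so this defines a perfect matching $M_T$.

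For the inverse direction, given a perfect matching $M$ of $H_G\setminus v$, I would read off oriented edges as follows: whenever the midpoint $m_e$ of an edge $e$ is matched in $M$ to an endpoint $u$ of $e$, put $e$ into $T_M$ oriented from $u$ toward its other endpoint. By construction $T_M$ has $|V(G)|-1$ edges, every non-root vertex of $G$ has out-degree one, and $v$ has out-degree zero. The main obstacle is then to show $T_M$ is acyclic. I would argue by contradiction: supposing $T_M$ contains a directed cycle $C$ enclosing a bounded region $R$, let $V_R$, $E_R$, $F_R$ denote the vertices, edges, and bounded faces of $G$ lying strictly inside $R$ (with $E_R$ collecting edges having at least one endpoint in $V_R$, which by planarity cannot cross $C$). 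On one hand, the out-edges of the $|V_R|$ interior vertices are all in $E_R$ while vertices on $C$ send their out-edge along $C$, so $|E_R\cap T_M|=|V_R|$. On the other hand, each midpoint on $C$ is matched to an endpoint of its edge (since $C\subseteq T_M$), forcing every face-vertex in $F_R$ to be matched to the midpoint of a non-tree edge in $E_R$; since both faces bordering any edge in $E_R$ lie in $F_R$, this gives $|E_R\setminus T_M|=|F_R|$. Adding yields $|E_R|=|V_R|+|F_R|$, contradicting Euler's formula applied to the subgraph inside $\overline{R}$, which gives $|E_R|=|V_R|+|F_R|-1$. Hence $T_M$ is acyclic and thus a spanning tree of $G$ rooted at $v$.

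Both compositions reduce to the identity directly from the constructions, since the two maps assign the same orientations to tree edges (via outgoing edges from non-root vertices) and the same pairings of face-vertices with non-tree-edge midpoints via the rooted dual tree. For part (b), each matching edge of $M_T$ that lies inside an edge $e$ of $T$ carries the weight of $e$ by definition of the induced weighting, while each matching edge incident to a face-vertex lies inside an edge of $G^{*}$ and carries weight one. Multiplying edge weights over $M_T$ therefore reproduces the weight of $T$, yielding the weight-preserving assertion.
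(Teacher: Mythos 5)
Your construction coincides with the paper's: the forward map takes the tail half-edges of $T$ together with the tail half-edges of the dual spanning tree of $G^*\cup z$ rooted at the outer face, and the inverse map reads off, for each non-root vertex, the edge of $G$ whose midpoint it is matched to. The one place you diverge is in proving that $T_M$ is acyclic: the paper invokes its parity lemma (Lemma \ref{txx}: any cycle of $G$ encloses an odd number of vertices of $H_G$) and gets a parity contradiction with the matching, whereas you count tree and non-tree edges inside the cycle directly against Euler's formula. These are the same Euler-formula idea packaged differently, and yours is self-contained.

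However, that counting step is stated imprecisely. You define $E_R$ as the edges with at least one endpoint strictly inside $C$, which omits chords of $C$ (edges drawn in the open region $R$ with both endpoints on $C$). When chords are present, the identity $|E_R\setminus T_M|=|F_R|$ is false: some faces of $F_R$ are matched to midpoints of chords, and no chord can lie in $T_M$ because each vertex of $C$ already spends its unique outgoing edge along $C$. Your application of Euler's formula to the subgraph inside $\overline{R}$ is off by the same number of chords, so the two errors cancel and the contradiction survives --- but as written both displayed equalities are wrong in general. The clean fix is to let $E_R$ consist of \emph{all} edges of $G$ drawn in the open region bounded by $C$ (endpoints allowed on $C$), exactly the count in the paper's Lemma \ref{txx}; with that definition your two identities and the Euler count hold verbatim. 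Two further points you assert without proof are standard but worth a line each: a cycle in a graph in which every vertex has out-degree at most one is automatically consistently directed (so ``no directed cycle'' does give acyclicity), and the claim that $M=M_{T_M}$ on the face-vertices needs the observation that the subdivision of a rooted tree with its root deleted has a unique perfect matching, consisting of the tail half-edges --- this is precisely the auxiliary fact the paper records for this purpose. Part (b) is handled as in the paper.
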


\begin{proof}

Given a spanning tree $T$ of $G$ rooted at $v$, we construct the corresponding perfect matching $\mu_T$ of $H_G\setminus v$ as follows. All the edges of $T$ are on the frame (see the beginning of Section 3). Include in $\mu_T$ all the ``tail half-edges'' of $T$ (i.e., for each edge $\overline{e}$ of $T$, from among the two edges of $H_G$ contained in $\overline{e}$, include in $\mu_T$ the one which is at the tail end of $\overline{e}$); note that these are disjoint, because as we pointed out above, $T$ has a unique outgoing edge at each vertex. These are all the edges of $\mu_T$ contained in the frame.

Consider the tree $T^*$ which is the dual of $T$. Then $T^*$ is a spanning tree of the dual graph $G^*\cup z$ of~$G$ (recall that the planar dual $G^*$ of $G$ is obtained from the dual graph of $G$ by removing from the dual graph the vertex $z$ corresponding to the unbounded face of $G$); $T^*$ contains an edge $e^*$ of the dual graph $G^*\cup z$ precisely if the edge $e$ of $G$ corresponding\footnote{ I.e.\ the edge  $e^*$ is the dual of $e$.} to $e^*$ is not contained in $T$ . Orient the edges of $T^*$ so that it is rooted at $z$. Include in $\mu_T$ also the tail half-edges of $T^*$. These are also disjoint from each other; they are also disjoint from the half-edges we included from the frame, as $T$ and $T^*$ are disjoint.

Let $\nu$ be the number of vertices of $G$, $\epsilon$ the number of its edges, and $\varphi$ the number of its bounded faces. Then by Euler's theorem we have $\nu-\epsilon+\varphi=1$, so $\epsilon=\nu+\varphi-1$. Note that by the procedure in the previous paragraph, in $\mu_T$ were included $\nu-1$ edges contained in the frame (one for each edge of the spanning tree $T$) and $\varphi$ edges contained in the dual frame (one corresponding to each edge of the dual spanning tree $T^*$), for a total of $\nu+\varphi-1$ edges. Since these edges are disjoint and the number of vertices of $H_G\setminus v$ is $\nu+\epsilon+\varphi-1=\nu+(\nu+\varphi-1)+\varphi-1=2(\nu+\varphi-1)$, $\mu_T$ is indeed a perfect matching of $H_G\setminus v$.

Conversely, to construct the spanning tree $T_\mu$ of $G$ rooted at $v$ corresponding to a given perfect matching $\mu$ of $H_G\setminus v$, proceed as follows. For each vertex $u$ of $G\setminus v$, consider the edge $e$ in $\mu$ incident to $u$, and include in $T_\mu$ the edge $\overline{e}$ of $G$ containing $e$;
orient $\overline{e}$ so that $e$ becomes its tail half-edge.
We claim that $T_\mu$ is a spanning tree of $G$ rooted at $v$.

Indeed, the number of edges we included in $T_\mu$ is $\nu-1$. In addition, Lemma \ref{txx}. implies that there is no cycle formed by these edges. These two facts imply that $T_\mu$ is a spanning tree of $G$.

Note that $T_\mu$ together with the midpoints of its edges can naturally be regarded as a subgraph of~$H_G$; denote this subgraph by $t_\mu$. Then the edges of $\mu$ contained in $T_\mu$ form a perfect matching~of~$t_\mu\setminus v$ (because this is a collection of $\nu-1$ disjoint edges, and the number of vertices of $t_\mu\setminus v$ is~$2\nu-2$).

However, it is not hard to show that given any tree $T$ rooted at $u$, if $t$ is the graph obtained from $T$ by subdividing each edge into a path with two edges, then the graph $t\setminus u$ has a unique perfect matching, which consists of the tail half-edges of $T$ (this follows readily by induction on the number of edges of $T$).

This shows that, if we orient each edge of $T_\mu$ as specified above (i.e.\ so that the edge of $\mu$ contained in it becomes its tail half-edge), then $T_\mu$ becomes a spanning tree rooted at $v$, as claimed.


One readily sees from our construction of the maps $T\mapsto\mu_T$ and $\mu\mapsto T_\mu$ that they are inverses of each other. This proves that they are both bijections, and thus part $({\rm a})$.

It readily follows from our construction that, in the presence of the weight defined in part (b), these bijections are weight preserving. This completes the proof.
\end{proof}

\begin{cor}
\label{tdb}
Let $T$ be a spanning tree of $G$ rooted at $v$, and $\mu$ a perfect matching of $H_G\setminus v$, corresponding to each other under Temperley's bijection. Let $e_i=\{x_i,y_i\}$, $i=1,\dotsc,s$, be edges of~$G$. Then $T$ contains the edge $e_i$ oriented from $x_i$ to $y_i$, for $i=1,\dotsc,s$, if and only if $\mu$ contains the half-edge of $e_i$ incident to $x_i$, for $i=1,\dotsc,s$.
\end{cor}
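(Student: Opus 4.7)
The plan is to read the claim off the explicit construction of Temperley's bijection given in the proof of \cref{tda}; essentially no new work is needed beyond unpacking what a ``tail half-edge'' means in terms of the orientation of $T$.

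First I would fix an edge $e=\{x,y\}$ of $G$ and spell out the equivalence for this single edge; the statement for $e_1,\dotsc,e_s$ simultaneously then follows, because the condition on $T$ is a conjunction of conditions on single edges, and likewise for $\mu$. By definition, the half-edge of $e$ incident to $x$ is the edge of $H_G$ joining $x$ to the midpoint $m_e$ of $e$; call it $h_x$. Similarly $h_y$ is the half-edge of $e$ incident to $y$.

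Next I would argue the forward direction. Suppose $T$ contains $e$ oriented from $x$ to $y$. Then $y$ is the head and $x$ is the tail of $e$ in the rooted tree $T$, so by the construction $T\mapsto\mu_T$ in the proof of \cref{tda}, the tail half-edge of $e$ (which by definition is the half-edge of $e$ at the tail end, namely $h_x$) is included in $\mu_T=\mu$. Conversely, if $\mu$ contains $h_x$, then in the construction $\mu\mapsto T_\mu$ of that same proof, when we process the vertex $x$ we find $h_x\in\mu$ incident to $x$; thus $e$ is included in $T_\mu=T$ and is oriented so that $h_x$ becomes its tail half-edge. Since $h_x$ is incident to $x$, this forces $x$ to be the tail and hence $e$ is oriented from $x$ to $y$ in $T$.

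Finally, I would note that the two directions together give the stated ``if and only if'' for each single edge $e_i$, and conjoining over $i=1,\dotsc,s$ yields the corollary. The only potential pitfall is to verify that the orientations used in the two halves of \cref{tda}'s proof (the ``orient toward the root'' convention for $T\mapsto\mu_T$, and the ``orient so the matched half-edge becomes the tail'' convention for $\mu\mapsto T_\mu$) are consistent; but this consistency is exactly what is established in the last part of that proof, where it is shown that the two maps are mutual inverses. Once this is observed, no further obstacle remains.
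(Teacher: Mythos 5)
Your proposal is correct and follows the same route as the paper, which simply observes that the claim is immediate from the explicit constructions of $T\mapsto\mu_T$ and $\mu\mapsto T_\mu$ in the proof of Theorem \ref{tda}. Your unpacking of both directions (tail half-edge in one direction, the unique matched edge at $x_i$ in the other) is exactly the intended reading, and the consistency of the two orientation conventions is indeed guaranteed by the maps being mutual inverses.
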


\begin{proof} This follows directly from the construction of the maps $T\mapsto\mu_T$ and $\mu\mapsto T_\mu$ in the above proof of Theorem \ref{tda}.
\end{proof}

{\it Proof of Theorem $\ref{tbc}$.} Let $G_0$ be the graph obtained from $G$ by adding two vertices of degree one $v_0$ and $v_{2n}$, incident to $v_1$ and $v_{2n-1}$, respectively\footnote{ Note that in this proof the roles of $G$ and $G_0$ have been swapped compared to what they were at the beginning of Section 2. This is due to the fact that in Theorem \ref{tbc} the more natural notation $G$ (as opposed to $G_0$) is used for the given plane graph.} . It is clear that the spanning trees of $G$ rooted at~$v_{2n-1}$ which contain the oriented edges $(v_2,v_3),(v_4,v_5),\dotsc,(v_{2n-2},v_{2n-1})$ can be identified with the spanning trees of $G_0$ rooted at $v_{2n}$ which contain the same oriented edges, plus the edge $(v_0,v_1)$ (the latter is contained in all spanning trees of $G_0$, because $v_0$ is a leaf). Similarly, the spanning trees of $G$ rooted at $v_{1}$ which contain the oriented edges $(v_2,v_1),(v_4,v_3),\dotsc$, $(v_{2n-2},v_{2n-3})$ can be identified with the spanning trees of $G_0$ rooted at $v_{0}$ which contain these same oriented edges, and in addition also the edge $(v_{2n},v_{2n-1})$ (again, due to the fact that $v_{2n}$ is a leaf). Therefore, it suffices to show that there is a natural bijection between the two above-described sets of rooted spanning trees of~$G_0$.

As a consequence of Corollary \ref{tdb}, a spanning tree $T$ of $G_0$ rooted at $v_{2n}$ contains the oriented edges $(v_0,v_1),(v_2,v_3),(v_4,v_5),\dotsc,(v_{2n-2},v_{2n-1})$ precisely when the perfect matching $\mu_T$ of $H_{G_0}\setminus v_{2n}$ corresponding to it under Temperley's bijection contains $\{v_0,m_1\},\{v_2,m_3\},\dotsc,\{v_{2n-2},m_{2n-1}\}$; in turn, these perfect matchings $\mu_T$ can be identified with the perfect matchings of $G_0^+$. By the same argument, the spanning trees of $G_0$ rooted at $v_{0}$ which contain the oriented edges $(v_2,v_1),(v_4,v_3),\dotsc,(v_{2n-2},v_{2n-3}),(v_{2n},v_{2n-1})$ are in correspondence under Temperley's bijection with the perfect matchings of $G_0^-$. Since the perfect matchings of $G_0^+$ and~$G_0^-$ are in natural bijection by Theorem \ref{tba}, the proof is complete. $\hfill\square$

\section{An extension of Temperley's theorem and a related perfect matching bijection} 

An immediate consequence of Temperley's bijection (see Theorem \ref{tda}) is that the number of perfect matchings of the graph $H_G\setminus v$ is independent of which vertex $v$ of $G$ on the boundary of the infinite face is removed from the dual refinement $H_G$ of the graph $G$.

The first result of this section (see Theorem \ref{tea} below) is a generalization of this. We use it to answer a question posed by Corteel, Huang and Krattenthaler \cite{Corteeletal2023AztecT1} on finding a bijection between the perfect matchings of two families of graphs (see Corollary \ref{teb}). We conclude this section with an extension of Temperley's bijection (stated as Theorem \ref{tec}). While Temperley's theorem gives a bijection between the set of spanning trees of a plane graph and the set of perfect matchings of its dual refinement with a vertex removed, our extension provides a bijection between the set of spanning forests of a plane graph that satisfy certain conditions, and the set of perfect matchings of a related graph.

Let $G$ be a weighted plane graph, and consider an arbitrary weight function on the edges of its planar dual $G^*$. Let $H_G$ be the dual refinement of $G$ introduced at the beginning of Section 2. Then there is a natural induced weight on the edges of $H_G$: weight each edge of $H_G$ by the weight of the edge of $G$ or $G^*$ that contains it\footnote{ The only edges of $H_G$ that are not contained in any edge of $G$ or $G^*$ are those that connect an edge-vertex on the infinite face to a face-vertex; weight all these by 1.}.

Our results involve choosing two sets of original vertices (i.e.\ vertices of $G$) on the boundary of the infinite (i.e.\ unbounded) face of
$G$
(see the picture on the left in Figure \ref{fea}; $G$ is shown in thick lines): $v_1,v_2,\dotsc,v_{2n+1}$ and $v'_1,v'_2,\dotsc,v'_{2n+1}$, so that\footnote{ For Theorem \ref{tec} we will drop condition $(i)$.} 

\medskip
$(i)$ $v_1v_2\dotsc v_{2n+1}$ is a path in $G$

\smallskip
$(ii)$ $v'_1v'_2\dotsc v'_{2n+1}$ is a path in $G$

\smallskip
$(iii)$ vertices $v_1,v_2,\dotsc,v_{2n+1},v'_{2n+1},v'_{2n},\dotsc,v'_{1}$ are in cyclic order\footnote{ Without loss of generality, we assume throughout this section that this is the counterclockwise order.}

\smallskip
$(iv)$ vertices $v_2,v_4,\dotsc,v_{2n}$ and $v'_2,v'_4,\dotsc,v'_{2n}$ have degree 2, and belong to distinct bounded faces of $G$.

\medskip
We need one more definition before stating the first result in this section. Let $v$ be an original vertex in $H_G$ that is on the boundary of the infinite face and has degree 2. Then we say that the graph obtained from $H_G$ by removing\footnote{ When removing a vertex from a graph, all edges incident to it are also removed.} $v$ and the two edge-vertices on the infinite face adjacent to $v$ is obtained by {\it smashing in $v$} (the picture on the right in Figure \ref{fea} illustrates this construction for $v_2$, $v_4$, $v'_2$ and $v'_4$); we
say that the face-vertex corresponding to the bounded face containing $v$ was obtained by {\it smashing in vertex $v$} (in Figure \ref{fea}, $f_2$, $f_4$, $f'_2$ and $f'_4$ were obtained by smashing in vertices $v_2$, $v_4$, $v'_2$ and $v'_4$, respectively).

Let $\widehat{H}_G$ be the graph obtained from $H_G$ by smashing in vertices $v_2,v_4,\dotsc,v_{2n},v'_2,v'_4,\dotsc,v'_{2n}$ (these operations are well defined, by condition $(iv)$ above), and denote by $f_2,f_4,\dotsc,f_{2n},f'_2,f'_4,\dotsc,f'_{2n}$ the face-vertices obtained by smashing in these $v_i$'s and $v'_i$'s, respectively.


\begin{thm}
\label{tea}
$(${\rm a}$)$.\ There is a weight-preserving bijection between the set of perfect matchings of $\widehat{H}_{G}\setminus \{v_1,f_2,v_3,f_4,\dotsc,f_{2n},v_{2n+1}\}$ and the set of perfect matchings of $\widehat{H}_{G}\setminus \{v'_1,f'_2,v'_3,f'_4,\dotsc,f'_{2n},v'_{2n+1}\}$.

\medskip
$(${\rm b}$)$. More generally, let $P_1,P_2,\dotsc,P_{2n+1}$ be non-intersecting paths on $H_G$ so that $P_{2i-1}$ is a path in $G$ connecting $v_{2i-1}$ to $v'_{2i-1}$, for $i=1,\dotsc,n+1$, and $P_{2i}$ is a path in the planar dual $G^*$ connecting $f_{2i}$ to $f'_{2i}$, for $i=1,\dotsc,n$. For a path $P$, denote by $\st_P$ the first $($starting$)$ vertex in the path, and by $\en_P$ the last $($ending$)$ vertex in the path\footnote{ The paths $P_1,P_2,\dotsc,P_{2n+1}$ are regarded as starting at $v_1,f_2,v_3,f_4,\dotsc,f_{2n},v_{2n+1}$, respectively.}. 

Then for any fixed $I\subseteq[2n+1]$ there is a weight-preserving bijection between

\medskip
$(1)$ the set of perfect matchings of $\widehat{H}_{G}\setminus \{v_1,f_2,v_3,f_4,\dotsc,f_{2n},v_{2n+1}\}$ which contain all the edges of the unique perfect matching of the path $P_i\setminus \st_{P_i}$, for $i\in I$

\medskip
and

\medskip
$(2)$ the set of perfect matchings of $\widehat{H}_{G}\setminus \{v'_1,f'_2,v'_3,f'_4,\dotsc,f'_{2n},v'_{2n+1}\}$ which contain all the edges of the unique perfect matching of the
path $P_i\setminus \en_{P_i}$, for $i\in I$.

\end{thm}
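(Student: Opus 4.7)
The plan is to prove Theorem~\ref{tea}(b) by constructing an explicit bijection via the gliding machinery developed in Section~3; Theorem~\ref{tea}(a) will then follow as the case $I=\emptyset$. Given a matching $\mu$ of the graph in $(1)$, I would assemble a family $\mathcal{P}_\mu = (P_1^\mu, \dots, P_{2n+1}^\mu)$ of pairwise non-intersecting, $\mu$-alternating paths on $\widehat{H}_G$, with each $P_i^\mu$ running from $\st_{P_i}$ to $\en_{P_i}$, and then define $\phi(\mu)$ to be the matching obtained from $\mu$ by shifting along every path of $\mathcal{P}_\mu$.

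For $i \in I$, I would set $P_i^\mu := P_i$; the hypothesis that $\mu$ contains the unique perfect matching of $P_i \setminus \st_{P_i}$ makes $P_i$ automatically $\mu$-alternating, and shifting along $P_i$ replaces this perfect matching by the unique perfect matching of $P_i \setminus \en_{P_i}$, exactly as required by set $(2)$. For $i \notin I$, I would construct $P_i^\mu$ by the gliding procedure of Section~3 applied to the primal frame when $i$ is odd (so $\st_{P_i} = v_{2i-1}$, $\en_{P_i} = v'_{2i-1}$) and to the dual frame when $i$ is even (so $\st_{P_i} = f_{2i}$, $\en_{P_i} = f'_{2i}$): start at $\st_{P_i}$, take an initial half-edge step along the first edge of $P_i$, and alternate $\mu$-edges with frame/dual-frame glides until the walk is blocked at a removed vertex. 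Once $\mathcal{P}_\mu$ is built, shifting along all its paths turns the unmatched vertex set of $\mu$ from $\{v_1, f_2, v_3, f_4, \dots, v_{2n+1}\}$ into $\{v'_1, f'_2, v'_3, f'_4, \dots, v'_{2n+1}\}$, so $\phi(\mu)$ lies in set $(2)$. Weight preservation is automatic, since each whole frame (respectively dual-frame) edge of $G$ (respectively $G^*$) traversed by a path has its two half-edges swapped, and by the induced weighting both halves carry the weight of the underlying edge. The inverse $\psi$ is defined symmetrically by gliding backward from the removed vertices on the $v'$-side; a standard argument as in the proof of Theorem~\ref{tba} shows that the resulting family coincides with $\mathcal{P}_\mu$ and that shifting is an involution on each path, so $\phi$ and $\psi$ are mutually inverse.

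The main obstacle will be verifying that, for each $i \notin I$, the gliding walk $P_i^\mu$ terminates exactly at $\en_{P_i}$ rather than at some other removed vertex, and that the resulting family $\mathcal{P}_\mu$ is pairwise non-intersecting. The argument from Section~3 based on Lemma~\ref{txx} adapts to show that a gliding walk cannot close into a cycle and must stop at a vertex where the next glide is blocked, hence at a removed original vertex (for frame gliding) or at a removed face-vertex (for dual-frame gliding). The new difficulty is that the fixed paths $P_j$ for $j \in I$ partition the plane into regions, and I would argue that the gliding walk starting at $\st_{P_i}$ remains confined to the region between the two nearest such $P_j$'s in the cyclic order on the infinite face. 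Within that region, the only available blocked termini are $\en_{P_i}$ and the starting and ending points of other free gliding walks, and a generation-by-generation construction analogous to the comb argument in Section~3 then forces the walk from $\st_{P_i}$ to terminate at $\en_{P_i}$ and the family $\mathcal{P}_\mu$ to be non-intersecting.
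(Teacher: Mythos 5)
Your overall strategy (build a family of $\mu$-alternating non-intersecting paths by gliding and then shift along them) is the same as the paper's, but there is a genuine defect in how you launch the gliding walks, plus a gap in the termination argument. For a matching $\mu$ in set $(1)$, the vertices $\st_{P_i}$ ($=v_1,f_2,\dotsc,v_{2n+1}$) are exactly the \emph{removed} vertices, so they carry no edge of $\mu$ and the gliding rule of Section~3 --- ``follow the $\mu$-edge, then continue across the edge-vertex'' --- cannot be initiated there. Your substitute rule, ``take an initial half-edge step along the first edge of $P_i$,'' is an arbitrary choice for $i\notin I$: nothing in the hypotheses ties $\mu$ to $P_i$ when $i\notin I$, so the edge-vertex you reach need not be matched along that edge of $G$, the walk is not $\mu$-alternating, and shifting along it does not produce a matching. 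The construction must be run from the \emph{matched} end: for $\mu$ in set $(1)$ one glides from the primed vertices $v'_{2i-1}$ (and $f'_{2i}$), which are present and matched, and lets the walks terminate at the removed unprimed vertices. (The paper does precisely this, with the roles of the two sets interchanged: it starts from a matching in set $(2)$ and glides from $v_{2i-1}$ and $f_{2i}$.) Determining a priori which edge at a removed vertex the alternating path uses is exactly the information the glide from the other end supplies; assuming it is the first edge of $P_i$ is circular.

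Second, your argument that each free walk ends at the correct terminus leans on confinement between the fixed paths $P_j$, $j\in I$, together with a ``generation-by-generation comb'' argument. This cannot work as stated: when $I=\emptyset$ --- which is precisely part (a) --- there are no fixed paths and your confining regions degenerate to the whole plane. The paper's argument is different and uses the structural hypotheses directly: the frame paths $Q_1,Q_3,\dotsc,Q_{2n+1}$ are pairwise non-intersecting by Lemma~\ref{txx} (a crossing plus a boundary arc would create a cycle of $G$ enclosing an odd number of vertices of $H_G$, impossible for a matched region), so by the cyclic order (condition $(iii)$) they must connect $v_{2i-1}$ to $v'_{2i-1}$ in order; then each dual path $Q_{2i}$ is trapped strictly between $Q_{2i-1}$ and $Q_{2i+1}$, and because $v'_{2i-1}$ and $v'_{2i+1}$ are \emph{next-nearest neighbors} on the boundary (condition $(ii)$; condition $(i)$ plays the symmetric role for the inverse map), the only face-vertex it can terminate at is $f'_{2i}$. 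The generational/comb structure of Section~3 does not apply here: all $2n+1$ paths run from one side to the other in a single layer rather than nesting under one another. Your treatment of the indices $i\in I$ (the path $P_i$ is automatically alternating and shifting converts the matching of $P_i\setminus\st_{P_i}$ into that of $P_i\setminus\en_{P_i}$) and your weight-preservation remark are both correct.
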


\begin{figure}
    \centering
    \includegraphics[width=.9\textwidth]{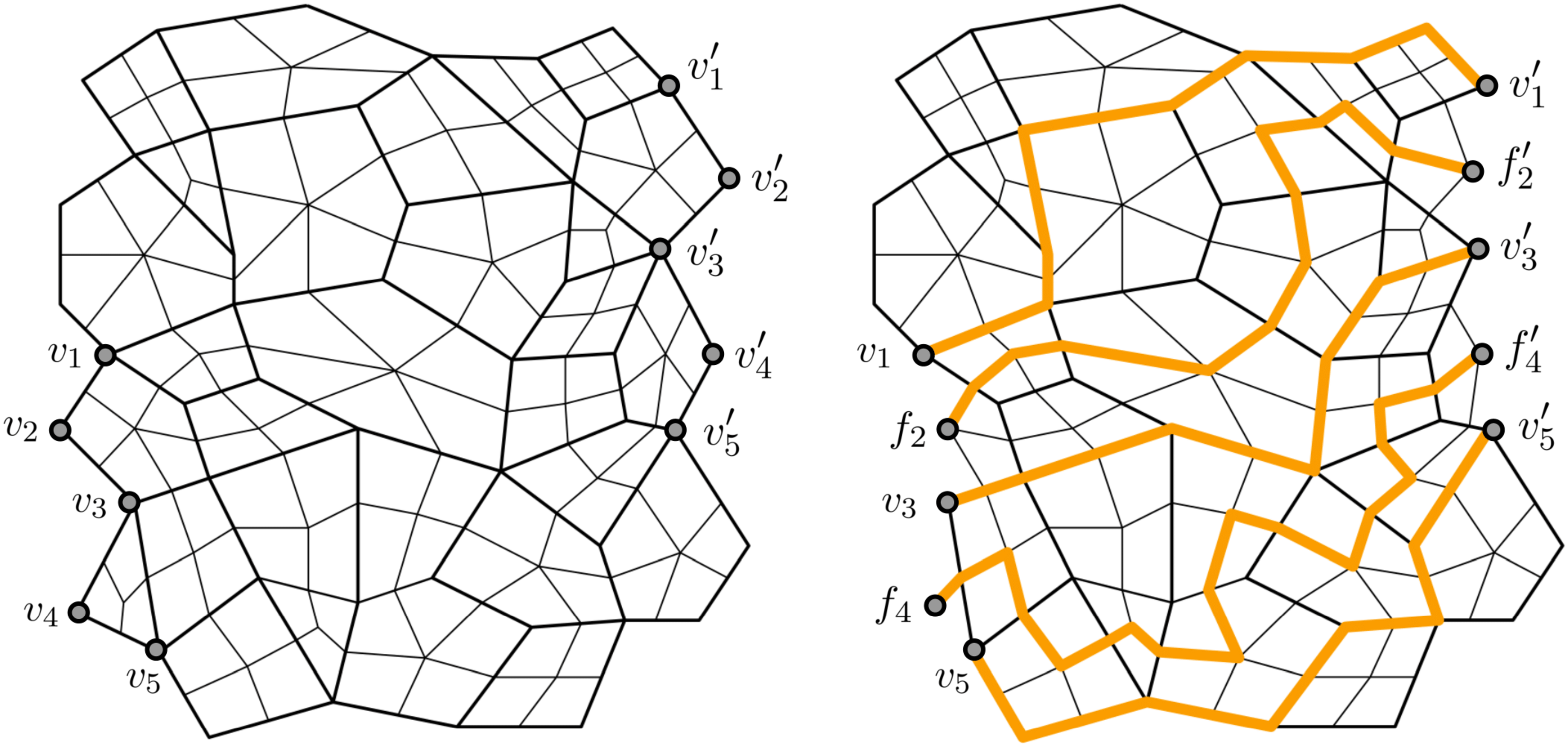}
    \caption{{\it Left.} The graph $H_{G}$ with a choice of vertices $v_{1},\cdots,v_{5}$ and $v'_{1},\cdots,v'_{5}$.
{\it Right.} The graph $\widehat{H}_{G}$ obtained from the left picture by smashing in vertices $v_2,v_4,v'_2,v'_4$; five non-intersecting paths $P_{1},P_{2},P_{3},P_{4},P_{5}$ are also shown.}
    \label{fea}
\end{figure}

\begin{figure}
    \centering
    \includegraphics[width=.9\textwidth]{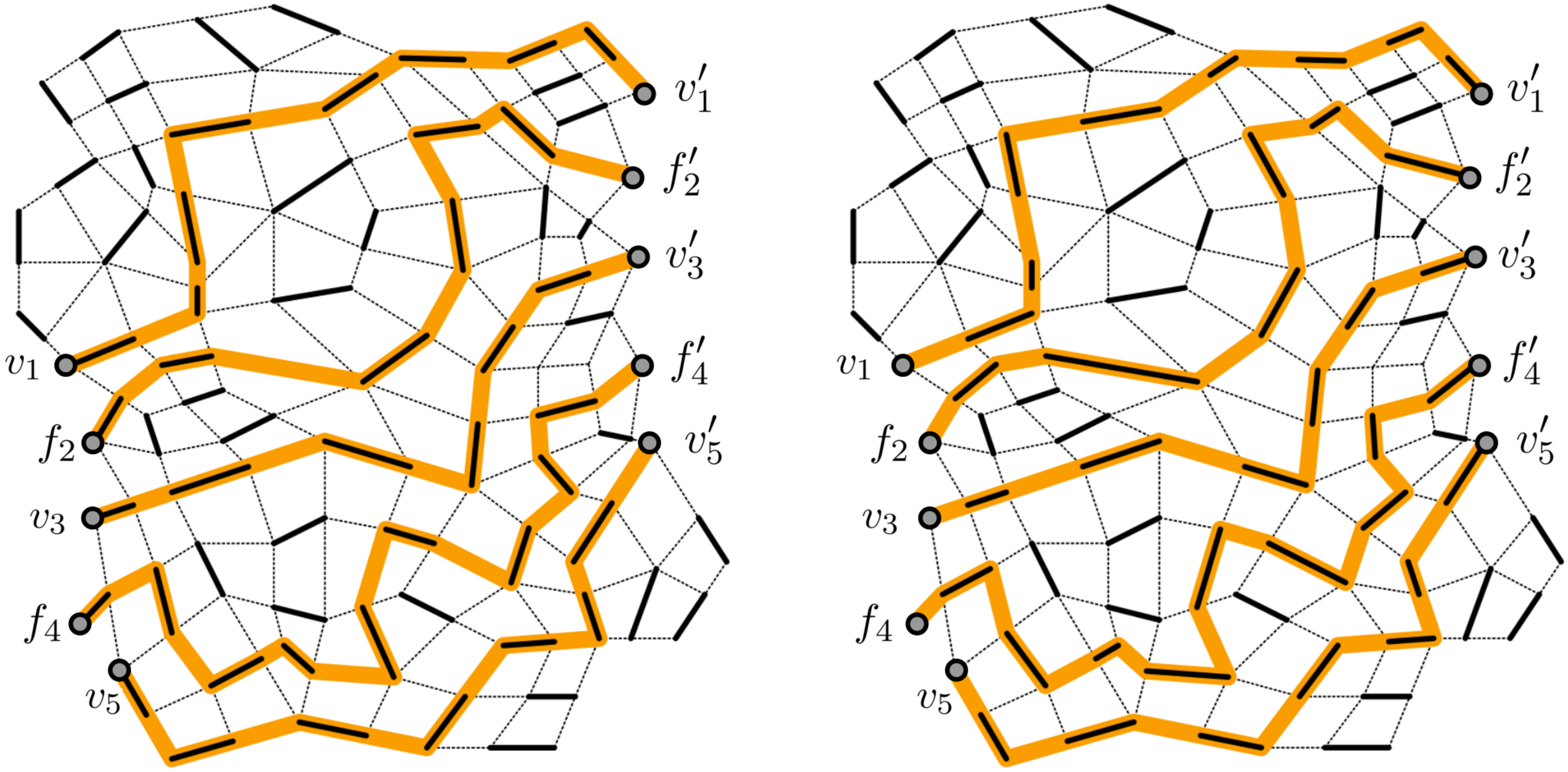}
    \caption{{\it Left.} An example of a perfect matching of $\widehat{H}_{G}\setminus \{v'_1,f'_2,v'_3,f'_4,v'_5\}$ that contains the unique perfect matching of the path $P_{i}\setminus \en_{P_i}$, $i=1,\dotsc,5$. {\it Right.} The corresponding perfect matching of $\widehat{H}_{G}\setminus \{v_1,f_2,v_3,f_4,v_5\}$ that contains the unique perfect matching of the path $P_{i}\setminus \st_{P_i}$, $i=1,\dotsc,5$.}
    \label{feb}
\end{figure}

\begin{proof}
Clearly, part (a) is the special case $I=\emptyset$ of part (b).
  
To prove part (b), let $\mu$ be a perfect matching of $\widehat{H}_{G}\setminus \{v'_1,f'_2,v'_3,f'_4,\dotsc,f'_{2n},v'_{2n+1}\}$ which contains all the edges of the unique perfect matching of the path $P_i\setminus \en_{P_i}$, for $i\in I$ (see the picture on the left in Figure \ref{feb} for an example). For an arbitrary $i\in[n+1]$, consider the vertex $v_{2i-1}$. Start from it and follow the edge of $\mu$ that contains it, and ``glide'' (as described in Section 3) along the frame until the next original vertex; from there, follow again the incident edge of $\mu$, and keep gliding this way until we get stuck. Clearly, we can only get stuck at one of the vertices $v'_{2j-1}$. This gives rise to $n+1$ paths on the frame; denote by $Q_{2i-1}$ the path starting at $v_{2i-1}$.

Note that these paths are non-intersecting. Indeed, if two paths $Q_{2i-1}$ and $Q_{2j-1}$ had a common point, by adding to these a portion of the boundary of the infinite face, a cycle in $G$ would be created, and Lemma \ref{txx} would lead to a contradiction. Therefore, $Q_1,Q_3,\dotsc,Q_{2n+1}$ are non-intersecting paths on the frame; then condition $(iii)$ implies that they connect $v_1,v_3,\dotsc,v_{2n+1}$ to $v'_1,v'_3,\dotsc,v'_{2n+1}$, respectively.

Now consider an arbitrary $i\in[n]$, and construct the path $Q_{2i}$ by gliding from $f_{2i}$. This time, the gliding takes place on the dual frame. Note that the path $Q_{2i}$ is confined to stay strictly between the previously constructed paths $Q_{2i-1}$ and $Q_{2i+1}$ (indeed, it cannot touch either of them, as that would lead to a vertex incident to two edges of $\mu$). Since these two paths end at $v'_{2i-1}$ and $v'_{2i+1}$, which are next-nearest neighbors in $G$ (by property $(ii)$ at the beginning of this section), it follows that $Q_{2i}$ must end at $f'_{2i}$. Thus, $Q_1,Q_2,\dotsc,Q_{2n+1}$ are non-intersecting paths in $\widehat{H}_G$ connecting $v_1,f_2,v_3,\dotsc,f_{2n},v_{2n+1}$ to  $v'_1,f'_2,v'_3,\dotsc,f'_{2n},v'_{2n+1}$, respectively. Furthermore, note that our construction implies that for $i\in I$, the path $Q_i$ must coincide with the path $P_i$ in the statement of the theorem (Figure \ref{feb} illustrates a situation when $I=[2n+1]$, so $Q_i=P_i$ for all $i$).

The required bijection can now easily be constructed. Let $\widetilde{\mu}$ be obtained from $\mu$ by shifting along the paths $Q_1,Q_2,\dotsc,Q_{2n+1}$, i.e.\ by including all the edges of $\mu$ off these paths, and all the edges on these paths which are not in $\mu$. We claim that $\mu\mapsto\widetilde{\mu}$ is the required bijection.

Indeed, by construction, $\widetilde{\mu}$ is a perfect matching of $\widehat{H}_{G}\setminus \{v_1,f_2,v_3,f_4,\dotsc,f_{2n},v_{2n+1}\}$, and it contains the unique perfect matching of $P_i\setminus\st_{P_i}$ for all $i\in I$ (since $Q_i=P_i$ for such $i$). So our map is well-defined. Due to property $(i)$ at the beginning of this section, this map can be inverted, so it is a bijection. It is also weight-preserving, as the two edges of $H_G$ contained in any edge of $G$ have the same weight, and the same holds for the two edges of $H_G$ contained in any edge of the planar dual $G^*$.
\end{proof}
  
As an application of Theorem \ref{tea}, we answer a question posed by  Corteel, Huang and Krattenthaler in \cite{Corteeletal2023AztecT1}, asking to find a bijection between the sets of perfect matchings of two families of graphs on the square grid, called generalized Aztec triangles.

The Aztec triangles were introduced by Di Francesco and Guitter in \cite{dfg}.
They arose naturally as regions on the square lattice whose domino tilings can be identified with the sets of configurations of the twenty-vertex model of statistical mechanics having a certain type of domain wall boundary conditions.


Start with a lattice square $S_{2n}$ of side-length $2n$ on $\Z^2$, and cut it in two congruent parts by a zig-zag lattice path $P$ which leaves the diagonal unit squares of $S_{2n}$ alternately below and above~$P$. Place above the resulting top half of $S_{2n}$ (consisting of the bottom 10 rows in the top left picture in Figure \ref{fec}, which illustrates the case $n=5$) the top half of an Aztec diamond of order $(n-1)$, in a right-justified manner. The resulting region ${\mathcal T}_n$ is the Aztec triangle of order $n$ (for $n=5$, this is shown on the top left in Figure \ref{fec}).


In \cite{DiFrancesco202120Vmodel} Di Francesco conjectured that the number of domino tilings of the Aztec triangle of order $n$ is
%
\begin{equation}
    2^{\frac{n(n-1)}{2}}\prod_{i=0}^{n-1}\frac{(4i+2)!}{(n+2i+1)!},
\label{eea}
\end{equation}
a formula reminiscent of the notorious expression $\prod_{i=0}^{n-1}\frac{(3i+1)!}{(n+i)!}$ giving the number of alternating sign matrices of order $n$.

\begin{figure}
    \centering
    \includegraphics[width=.5\textwidth]{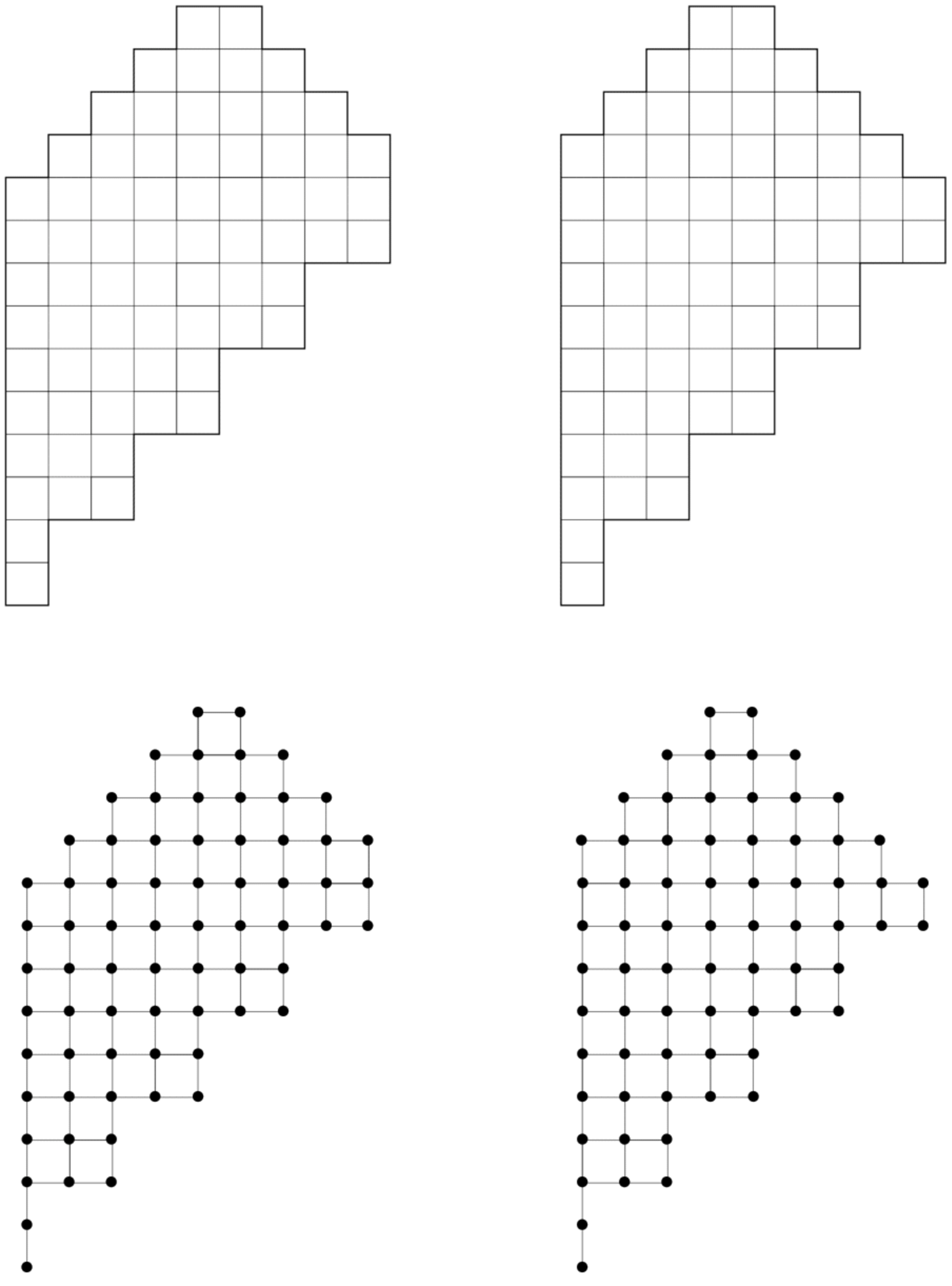}
    \caption{The Aztec triangles ${\mathcal T}_5$ (top left) and ${\mathcal T}'_5$ (top right). The planar dual graphs of these regions are shown below them. Our bijection is between perfect matchings of these planar dual graphs.}
    \label{fec}
\end{figure}

This conjecture of Di Francesco was recently proved in \cite{Corteeletal2023AztecT1} and \cite{Koutschanetal2024AztecT2} (see also \cite{df} for a generalization in a different direction). In \cite{Corteeletal2023AztecT1}, more general regions are considered, and simple product formulas are proved for them. One of these results concerns a variation of the Aztec triangle region~${\mathcal T}_n$ defined above, obtained using the same construction, with the one difference that the half Aztec diamond is placed above the half square in a {\it left}-justified manner (for $n=5$, this is illustrated on the top right in Figure \ref{fec}); denote this region by ${\mathcal T}'_n$. The formula proved in \cite{Corteeletal2023AztecT1} for the number of domino tilings of ${\mathcal T}'_n$ is identical with expression \eqref{eea}, which shows that ${\mathcal T}_n$ and ${\mathcal T}'_n$ have the same number of domino tilings. After pointing this out, the authors of \cite{Corteeletal2023AztecT1} write: ``We leave it as an open problem to find a bijection between these two families of domino tilings.''

We provide such a bijection, using Theorem \ref{tea}.

\begin{cor}
    There is a natural bijection between the sets of domino tilings of ${\mathcal T}_n$ and ${\mathcal T}'_n$.
\label{teb}
\end{cor}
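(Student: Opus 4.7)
The plan is to reduce to Theorem~\ref{tea}(a). By planar duality, domino tilings of a simply connected region in $\Z^2$ correspond to perfect matchings of its planar dual graph, so it suffices to give a bijection between the perfect matchings of the two planar duals drawn at the bottom of Figure~\ref{fec}.

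The key observation is that $\mathcal{T}_n$ and $\mathcal{T}'_n$ share the same bottom portion (the upper half of $S_{2n}$, cut off above by the zigzag lattice path $P$), and differ only in the position---right-justified vs left-justified---of the half Aztec diamond placed above $P$. Accordingly, I will construct a single plane graph $G$ whose dual refinement $H_G$ contains the planar dual of this common bottom portion, together with enough additional room across its top to accommodate both placements of the half Aztec diamond. The two paths $v_1 v_2\cdots v_{2n+1}$ and $v'_1 v'_2\cdots v'_{2n+1}$ required by Theorem~\ref{tea} will be selected on the top part of the boundary of the infinite face of $G$, positioned so that the zigzag slits cut out of $\widehat{H}_G$ by removing $\{v_1,f_2,v_3,\ldots,f_{2n},v_{2n+1}\}$ and $\{v'_1,f'_2,v'_3,\ldots,f'_{2n},v'_{2n+1}\}$ become, respectively, the two zigzag silhouettes along which the half Aztec diamond sits in $\mathcal{T}_n$ and in $\mathcal{T}'_n$. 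The cyclic order condition $(iii)$ is arranged by putting the $v$-path and the $v'$-path on disjoint portions of the top boundary of $G$, traversed in opposite senses as one goes around counterclockwise, while $(i)$, $(ii)$, $(iv)$ follow directly from the lattice layout with the $v_{2i}$ and the $v'_{2i}$ chosen to have degree two.

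The main obstacle is the following verification: one must check that $\widehat{H}_G \setminus \{v_1,f_2,v_3,\ldots,f_{2n},v_{2n+1}\}$ is naturally isomorphic (as an unweighted graph) to the planar dual of $\mathcal{T}_n$, and analogously for the primed data and $\mathcal{T}'_n$. This is a local bookkeeping argument around each removed vertex: smashing in a degree-two vertex $v_{2i}$ collapses a small neighborhood on the top of $H_G$ onto the face-vertex $f_{2i}$, and excising the alternating sequence $v_1,f_2,v_3,f_4,\ldots,v_{2n+1}$ from $\widehat{H}_G$ then carves out a zigzag boundary that has to match, edge for edge, the zigzag interface between the half square and the half Aztec diamond in $\mathcal{T}_n$ (and the corresponding shifted zigzag for $\mathcal{T}'_n$). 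Once this edge-for-edge identification is in place, Theorem~\ref{tea}(a), applied with all edge weights equal to $1$, directly supplies the bijection between the two sets of perfect matchings, and hence between the sets of domino tilings of $\mathcal{T}_n$ and $\mathcal{T}'_n$.
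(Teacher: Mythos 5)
Your reduction to Theorem \ref{tea} is the right idea, and for \emph{even} $n$ your plan coincides with the paper's proof: one takes an explicit lattice graph $G$ (in the paper, the induced subgraph of $\Z^2$ bounded by six explicit lines), places the path $v_1\cdots v_{2m-1}$ on the top-left portion of the boundary and $v'_1\cdots v'_{2m-1}$ on the top-right, and verifies that $\widehat{H}_G\setminus\{v_1,f_2,\ldots,v_{2m-1}\}$ and $\widehat{H}_G\setminus\{v'_1,f'_2,\ldots,v'_{2m-1}\}$ are precisely the planar duals of ${\mathcal T}_{2m}$ and ${\mathcal T}'_{2m}$. The ``local bookkeeping'' you defer is the entire content of that step, but for the explicit $G$ it is a routine check, so for even $n$ your outline is sound.

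The genuine gap is that your argument invokes only Theorem \ref{tea}(a) and makes no distinction between even and odd $n$. For $n=2m-1$ the planar dual of ${\mathcal T}_n$ does not arise as $\widehat{H}_G\setminus\{v_1,f_2,\ldots,v_{2n+1}\}$ for the natural lattice graph $G$; the paper instead reuses the \emph{same} graph $G$ and vertex data as in the even case $n=2m$, and observes that the duals of ${\mathcal T}_{2m-1}$ and ${\mathcal T}'_{2m-1}$ are the subgraphs of the two punctured graphs lying \emph{above} a specific path $P_{2m-1}$ joining $v_{2m-1}$ to $v'_{2m-1}$. This forces the use of part (b) of Theorem \ref{tea} with $I=\{2m-1\}$ --- restricting to perfect matchings containing the unique perfect matching of $P_{2m-1}\setminus \st_{P_{2m-1}}$ on one side and of $P_{2m-1}\setminus \en_{P_{2m-1}}$ on the other --- together with a forced-edge argument showing that below $P_{2m-1}$ the matching is uniquely determined, so that these restricted matchings are exactly the tilings of ${\mathcal T}_{2m-1}$ and ${\mathcal T}'_{2m-1}$. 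Your claim that part (a) ``directly supplies the bijection'' therefore does not go through uniformly: you would need either this additional step or a different graph $G$ for odd $n$ making part (a) applicable, and you have exhibited neither.
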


\begin{figure}
    \centering
    \includegraphics[width=.8\textwidth]{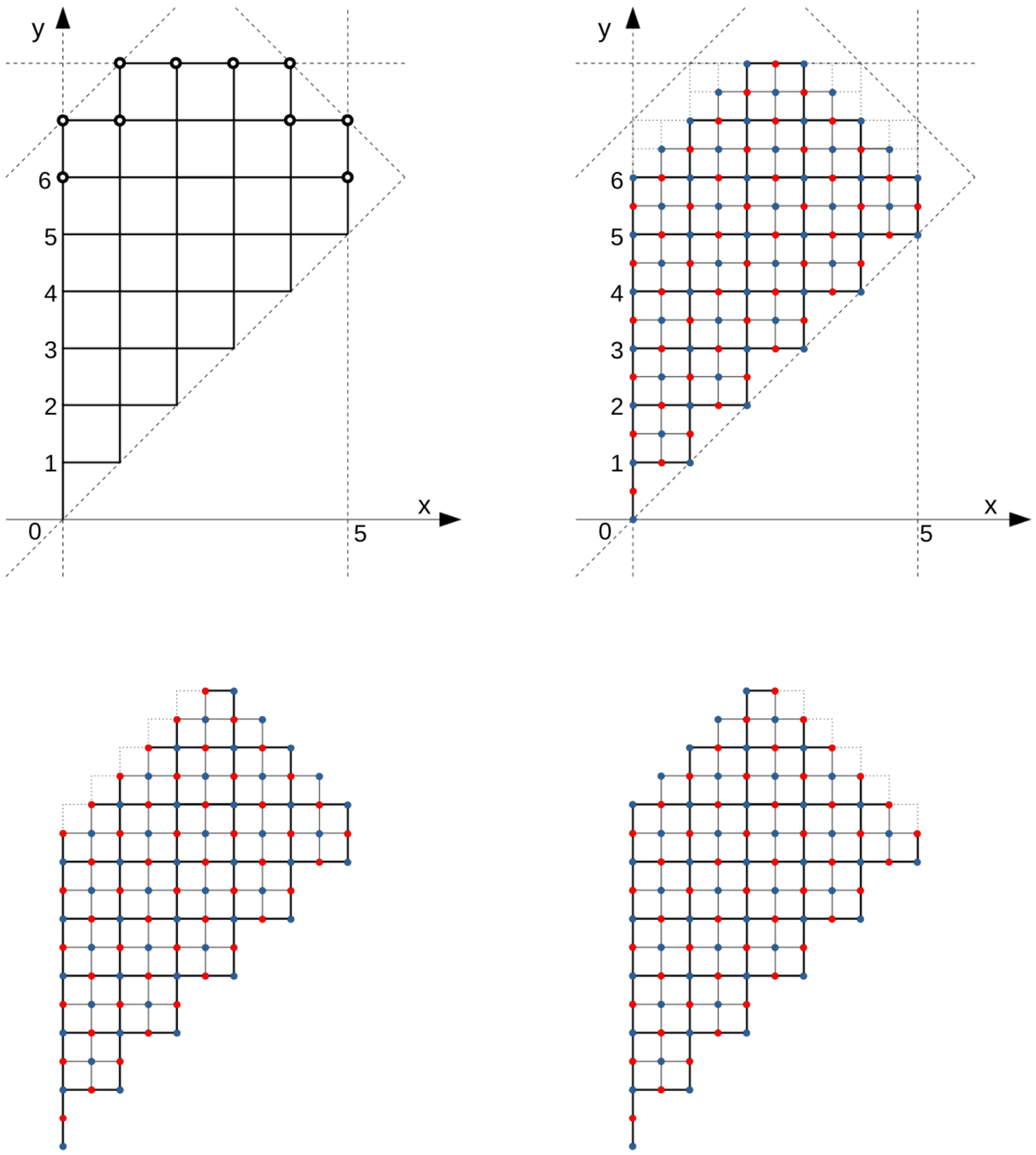}
    \caption{Illustrating the proof of Corollary \ref{teb} for $n=2m$ (here $m=3$). {\it Top left.} The graph $G$ with $v_{1},\ldots,v_{2m-1}$ (the five vertices on the top left) and $v'_{1},\ldots,v'_{2m-1}$ (the five vertices on the top right) marked. {\it Top right.} The corresponding graph $\widehat{H}_{G}$. {\it Bottom left.} $\widehat{H}_G\setminus \{v_1,f_2,v_3,\dotsc,v_{2m-1}\}$ is isomorphic to ${\mathcal T}_n$. {\it Bottom right.}  $\widehat{H}_G\setminus \{v'_1,f'_2,v'_3,\dotsc,v'_{2m-1}\}$ is isomorphic to ${\mathcal T}'_n$.
    }
    \label{fed}
\end{figure}

\begin{proof}
  The proof is slightly different for even and odd $n$. Let $n=2m$. On the $xy-$coordinate plane, consider the closed region enclosed by the six lines
  $x=0$, $x=2m-1$ $y=x$, $y=x+2m+1$, $y=-x+4m$, and $y=3m-1$. Let $G$ be the induced subgraph of $\mathbb{Z}^{2}$ whose vertices are the integer lattice points in this region, with all edges weighted by $1$; for $m=3$, the graph $G$ is shown on the top left in Figure \ref{fed}. The desired bijection will follow by applying Theorem \ref{tea} to the graph $G$, with the $v_i$'s and $v'_i$'s chosen as described below.

Choose $v_{2i-1}=(m-i,3m-i), v'_{2i-1}=(m-1+i,3m-i)$, for $i\in[m]$, and $v_{2j}=(m-1-j,3m-j), v'_{2j}=(m+j,3m-j)$, for\footnote{When $m=1$, $[m-1]=[0]\coloneqq\emptyset$.} $j\in[m-1]$, and consider the dual refinement $H_{G}$ of $G$ --- regarded as a subgraph of $(\frac12\Z)^2$. The graph $\widehat{H}_G$ is obtained by smashing in vertices $v_2,v_4,\dotsc,v_{2m}$ and  $v'_2,v'_4,\dotsc,v'_{2m}$; the corresponding smashed in vertices are $f_{2j}=(m-\frac{1}{2}-j,3m-\frac{1}{2}-j)$ and $f'_{2j}=(m-\frac{1}{2}+j,3m-\frac{1}{2}-j)$, $j=1,\dotsc,n$.

One readily checks that the graph $\widehat{H}_G\setminus \{v_1,f_2,v_3,\dotsc,v_{2m-1}\}$ is precisely the planar dual of the Aztec triangle region ${\mathcal T}_{2m}$. Moreover, the graph $\widehat{H}_G\setminus \{v'_1,f'_2,v'_3,\dotsc,v'_{2m-1}\}$ is the planar dual of ${\mathcal T}'_{2m}$. Thus the statement follows directly by Theorem \ref{tea}(a).



\begin{figure}
    \centering
    \includegraphics[width=.9\textwidth]{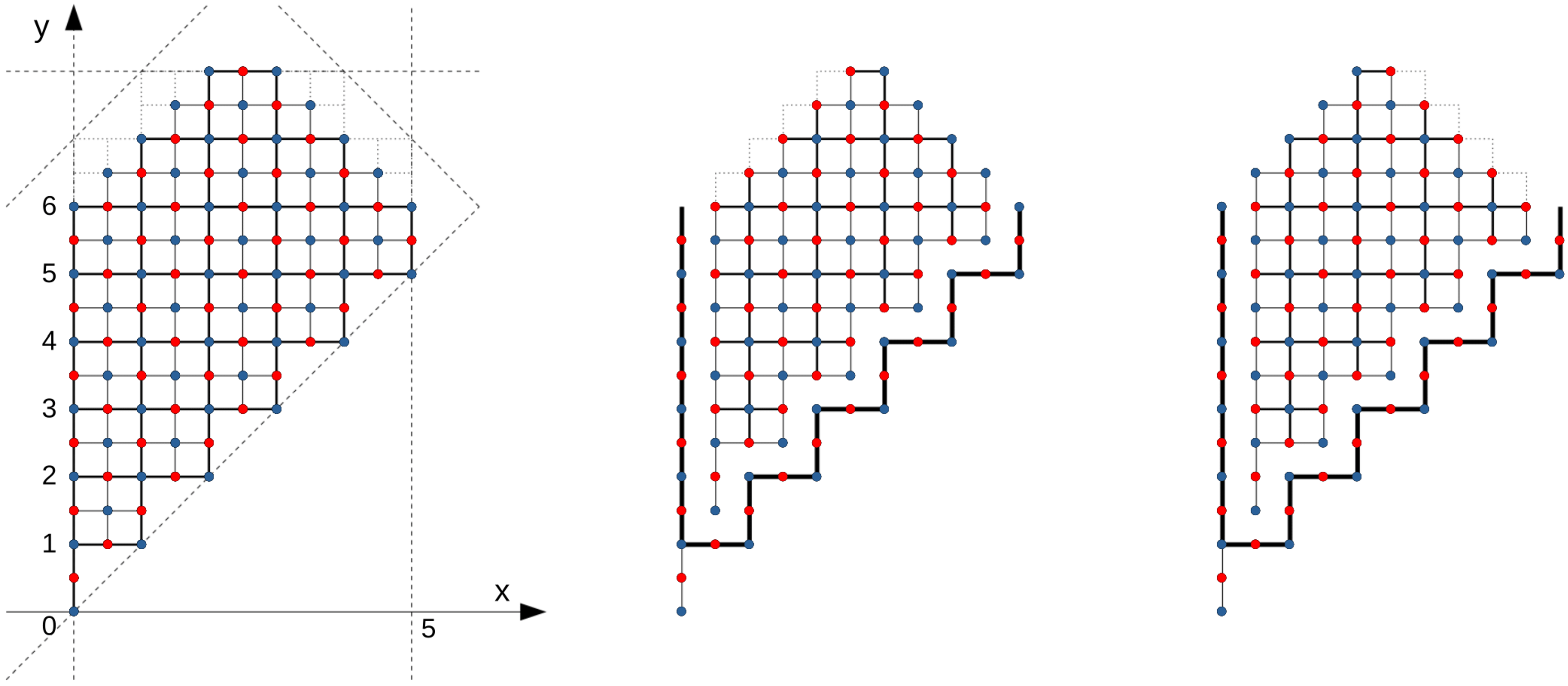}
    \caption{Illustrating the proof of Corollary \ref{teb} for $n=2m-1$ (here $m=3$). {\it Left.} The graph $\widehat{H}_{G}$ is exactly the same graph as in the case $n=2m$. {\it Center and right.} The vertices $v_{1},f_{2},v_{3},f_{4},v_{5},v'_{1},f'_{2},v'_{3},f'_{4},v'_{5}$ are also the same as for $n=2m$. The path $P_{2m-1}$ joining $v_{{2m-1}}$ and $v'_{2m-1}$ is shown in bold lines. One readily sees that the subgraphs above these paths are exactly the planar dual graphs of ${\mathcal T}_{2m-1}$ and ${\mathcal T}'_{2m-1}$ (compare with the bottom pictures in Figure \ref{fec}).}
    \label{fee}
\end{figure}

Suppose now that $n=2m-1$. We use the same graph $G$ and the same vertices $v_1,f_2,v_3,\dotsc,v_{2m-1}$ and $v'_1,f'_2,v'_3,\dotsc,v'_{2m-1}$
as in the case $n=2m$. Define $P_{2m-1}$ to be the path joining vertices $v_{2m-1}=(0,2m),(0,1), (1,1), (1,2), (2,2),\ldots,(2m-1,2m-1)$, and $(2m-1,2m)=v'_{2m-1}$ (this path is illustrated in Figure \ref{fee}). The statement will follow by applying Theorem \ref{tea}(b) with $n=m-1$ and $I=\{2m-1\}$. 

Indeed, this yields a bijection between the set of perfect matchings of $\widehat{H}_{G}\setminus \{v_1,f_2,v_3,\dotsc,v_{2m-1}\}$ that contain all edges of the unique perfect matching of the path $P_{2m-1}\setminus v_{2m-1}$, and the set of perfect matchings of $\widehat{H}_{G}\setminus \{v'_1,f'_2,v'_3,\dotsc,v'_{2m-1}\}$ that contain all edges of the unique perfect matching of the path $P_{2m-1}\setminus v'_{2m-1}$. However, due to forced edges along the path $P_{2m-1}$ and at the two vertices below it, these perfect matchings can be identified with the perfect matchings of ${\mathcal T}_{2m-1}$ and those of ${\mathcal T}'_{2m-1}$, respectively. This completes the proof.  
\end{proof}

\medskip
The second main result of this section is an extension of Temperley's theorem (see Theorem~\ref{tda}). It involves the following special type of spanning forests of a planar graph. Let $G$ be a plane graph. A spanning forest $F$ of $G$ is called {\it banded} if there exist vertices $u_1,\dotsc,u_k,u'_k,\dotsc,u'_1$ --- called the {\it distinguished points of $F$} ---, in counterclockwise order on the boundary of the infinite face of $G$, so that $F$ consists of $k$ different trees, each containing one pair of vertices $(u_i,u'_i)$, $i=1,\dotsc,k$. Connected components of a banded spanning forest are sometimes referred to as {\it bands}.

Given a banded spanning forest $F$ of a plane graph $G$, consider the subgraph $F^*$ of the planar dual graph $G^*$ obtained by including all the vertices of $G^*$, and all the edges of $G^*$ for which the corresponding edge of $G$ is not contained in $F$. One readily sees that, since $F$ is banded, $F^*$ does not contain any cycle (such a cycle would surround at least one original vertex $v$, and the connected component of $F$ containing $v$ could not include any boundary vertex of $G$). Therefore, $F^*$ is a spanning forest of $G^*$; we call it the {\it dual spanning forest of $F$}.

\begin{lemma}
Let $F$ be a banded spanning forest whose distinguished points are $u_1,\dotsc,u_k,u'_k,\dotsc,u'_1$, and let $T^*$ be a connected component of the dual spanning forest $F^*$.

 
$(${\rm a}$)$. There exists a vertex in $T^*$ adjacent to $z$ in $G^*\cup z$, the dual graph\footnote{ Recall that the dual graph of $G$ consists of the planar dual $G^*$ with an additional vertex $z$ corresponding to the infinite face of $G$, so that $z$ is connected by an edge to all the vertices of $G^*$ corresponding to faces of $G$ that share an edge with the infinite face.} of~$G$.

$(${\rm b}$)$. $T^*$ has either exactly one or exactly two such vertices. If there are two such vertices $f$ and $f'$, then $T^*$ is contained between two consecutive bands of $F$. If these are the $(i-1)$st and $i$th bands, then one of the two edges $\{f,z\}$, $\{f',z\}$ of the dual graph $G^*\cup z$ crosses the counterclockwise arc $(u_{i-1},u_{i})$, and the other the counterclockwise arc $(u'_{i},u'_{i-1})$, of the boundary of the infinite face of~$G$.

\label{tecc}
\end{lemma}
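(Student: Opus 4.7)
The plan is to analyze the region $R_{T^*}$ in the plane obtained as the union of the closed $G$-faces corresponding to the vertices of $T^*$. First I would show that $R_{T^*}$ is simply connected: any hole would be a bounded region whose boundary consists of $G$-edges each separating a $T^*$-face from a non-$T^*$ bounded face, and every such edge must lie in $F$ (otherwise the two faces would be connected by an $F^*$-edge, forcing the non-$T^*$ face into $T^*$). A hole's boundary would thus be a cycle in $F$, contradicting the forest property. Hence the outer boundary $\gamma$ of $R_{T^*}$ is a single simple closed curve. For part (a), note that if every edge of $\gamma$ were in $F$, then $\gamma$ would itself be a cycle in $F$, a contradiction; so $\gamma$ contains at least one edge on $C$ (the boundary of the infinite face) that is not in $F$, and the bounded face of $G$ incident to this edge is a vertex of $T^*$ adjacent to $z$ in $G^*\cup z$.

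For part (b), I would decompose $\gamma$ into an alternating cyclic sequence of maximal ``$F$-paths'' $P_1,\dots,P_m$ (subpaths lying in $F$) and maximal ``non-$F$ $C$-arcs'' $A_1,\dots,A_m$ (subpaths of $C$ with no edge in $F$). Each $P_j$ is a connected subgraph of the forest $F$, so it lies in a single band $B_{\sigma(j)}$, and its two endpoints lie on $C$. Equivalently, each component $T^*$ of $F^*$ corresponds to a unique bounded face (``chamber'') of the planar graph $F\cup C$, and the claim reduces to showing that every such chamber has either one or two maximal non-$F$ $C$-arcs on its boundary, with the two-arc case structured as in the statement.

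The main obstacle is ruling out $m\geq 3$. For this I plan to exploit the nested structure of the bands: the counterclockwise cyclic order $u_1,\dots,u_k,u'_k,\dots,u'_1$ on $C$ forces the chords $(u_i,u'_i)$ to be pairwise non-crossing and (cyclically) nested, so the bands themselves are drawn as a nested family in the disk bounded by $C$. Each $F$-path $P_j\subseteq B_{\sigma(j)}$ induces a chord on $C$ joining its two endpoints, and these $m$ chords are pairwise non-crossing (paths in distinct bands are vertex-disjoint, and two paths in the same band respect the planar embedding of that tree). A Jordan-curve analysis of the single closed curve $\gamma$, combined with the nested arrangement of bands and the fact that $\gamma$ bounds the single chamber $T^*$ on one side, will then force $m\leq 2$. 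Once this is established, the structural description in the $m=2$ case follows by identifying the two bands containing $P_1, P_2$ as a consecutive pair $B_{i-1}, B_i$ in the nested order; the two non-$F$ $C$-arcs are then pinned to the counterclockwise arcs $(u_{i-1},u_i)$ and $(u'_i,u'_{i-1})$ of $C$, as asserted.
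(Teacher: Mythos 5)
Your part (a) is correct and is essentially the paper's own argument: the boundary walk $\gamma$ of $R_{T^*}$ is the paper's ``walk around $T^*$,'' every edge of $\gamma$ not on the outer boundary $C$ must lie in $F$ (else its dual edge would merge two components of $F^*$), and if the edges of $\gamma$ on $C$ were all in $F$ as well, $\gamma$ would contain a cycle of $F$. Your preliminary check that $R_{T^*}$ has no holes is a reasonable extra precaution that the paper glosses over.

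Part (b) has a genuine gap: the step you defer to ``a Jordan-curve analysis \dots\ will then force $m\le 2$'' is the entire content of the statement, and the ingredients you propose to feed into it do not suffice. Test your plan on $k=1$, where $F$ is a single spanning tree, nestedness of the bands is vacuous, and a hypothetical configuration with $m=3$ has its three chords joining cyclically adjacent endpoints of the arcs $A_1,A_2,A_3$ --- automatically pairwise non-crossing. So non-crossingness and nestedness rule out nothing. What actually kills such configurations is the defining property of a banded forest that every component of $F$ reaches a distinguished point, and your sketch never uses it. The paper's mechanism is a trapping argument: if two non-$F$ boundary edges of $\gamma$ lie in the same arc $(u_j,u_{j+1})$ or $(u'_{j+1},u'_j)$ of $C$, then the path in $T^*$ joining the two incident faces, closed up through $z$ by the two dual edges (which no edge of $F$ can cross, precisely because those two boundary edges are not in $F$), encloses a vertex of $G$ whose $F$-component is then cut off from every distinguished point --- a contradiction. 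Combined with the observation that the spine paths from $u_i$ to $u'_i$ confine all of $T^*$ to a single strip or cap, which meets $C$ in at most two of those arcs, this yields $m\le 2$ and simultaneously pins the two crossings to $(u_{i-1},u_i)$ and $(u'_i,u'_{i-1})$. Neither the trapping step nor the confinement step appears in your proposal, and your closing assertion that in the $m=2$ case the two bands are automatically consecutive with the two arcs on opposite sides of the strip is exactly what these missing arguments are needed to establish. (A smaller point in your favor: by counting non-$F$ arcs of $C$ rather than all faces of $T^*$ sharing an edge with the infinite face, you are implicitly reading ``adjacent to $z$'' in the way the application in Theorem \ref{tec} requires, since a face of $T^*$ may also meet the infinite face along an edge of $F$; but this does not repair the missing core of the argument.)
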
  

If, in the context of Lemma \ref{tecc}, $T^*$ contains two vertices adjacent to $z$, we call $T^*$ a {\it channel} of $F$; if it contains just one such vertex, we call it a  a {\it bay}  of $F$\footnote{ Thus, the bands of $F$ could be thought of as islands.}.

\begin{proof} (a). Suppose there is no such vertex in $T^*$. Then it is possible to walk around $T^*$ in $G$, along a walk that separates it from the other connected components of $F^*$, without using any edges on the infinite face of $G$. But each edge of $G$ in such a walk must be in $F$, as its dual is not in $F^*$ (since this dual edge is part of our walk, which separates  $T^*$ from the other connected components of $F^*$). This produces a cycle contained in $F$, a contradiction.

(b). Partition the boundary of the infinite face of $G$ into the counterclockwise arcs
\begin{equation}
(u_1,u_2), (u_2,u_3),\dotsc,(u_{k-1},u_k),(u_k,u'_k),(u'_k,u'_{k-1}),\dotsc,(u'_2,u'_1),(u'_1,u_1).
\label{eeb}
\end{equation}
Suppose $T^*$ contains two distinct vertices $f$ and $f'$ adjacent to $z$ in $G^*\cup z$, and that both edges $\{f,z\}$ and $\{f',z\}$ cross the boundary of the infinite face of $G$ along the same arc in the list \eqref{eeb}.

Let $P^*$ be the path in $T^*$ connecting $f$ and $f'$. Then $P^*$, together with the edges $\{f,z\}$ and $\{f',z\}$, forms a cycle $C^*$ of the dual graph $G^*\cup z$. The cycle $C^*$ contains at least one vertex $v$ of $G$ in its interior. Then the connected component of $F$ containing $v$, being confined to the interior of $C^*$, cannot contain any of the vertices $u_1,\dotsc,u_n,u'_1,\dotsc,u'_n$, a contradiction.

Therefore, the edges $\{f,z\}$ and $\{f',z\}$ cross the boundary of the infinite face of $G$ along distinct arcs of the list \eqref{eeb}.

\begin{figure}
    \centering
    \includegraphics[width=.85\textwidth]{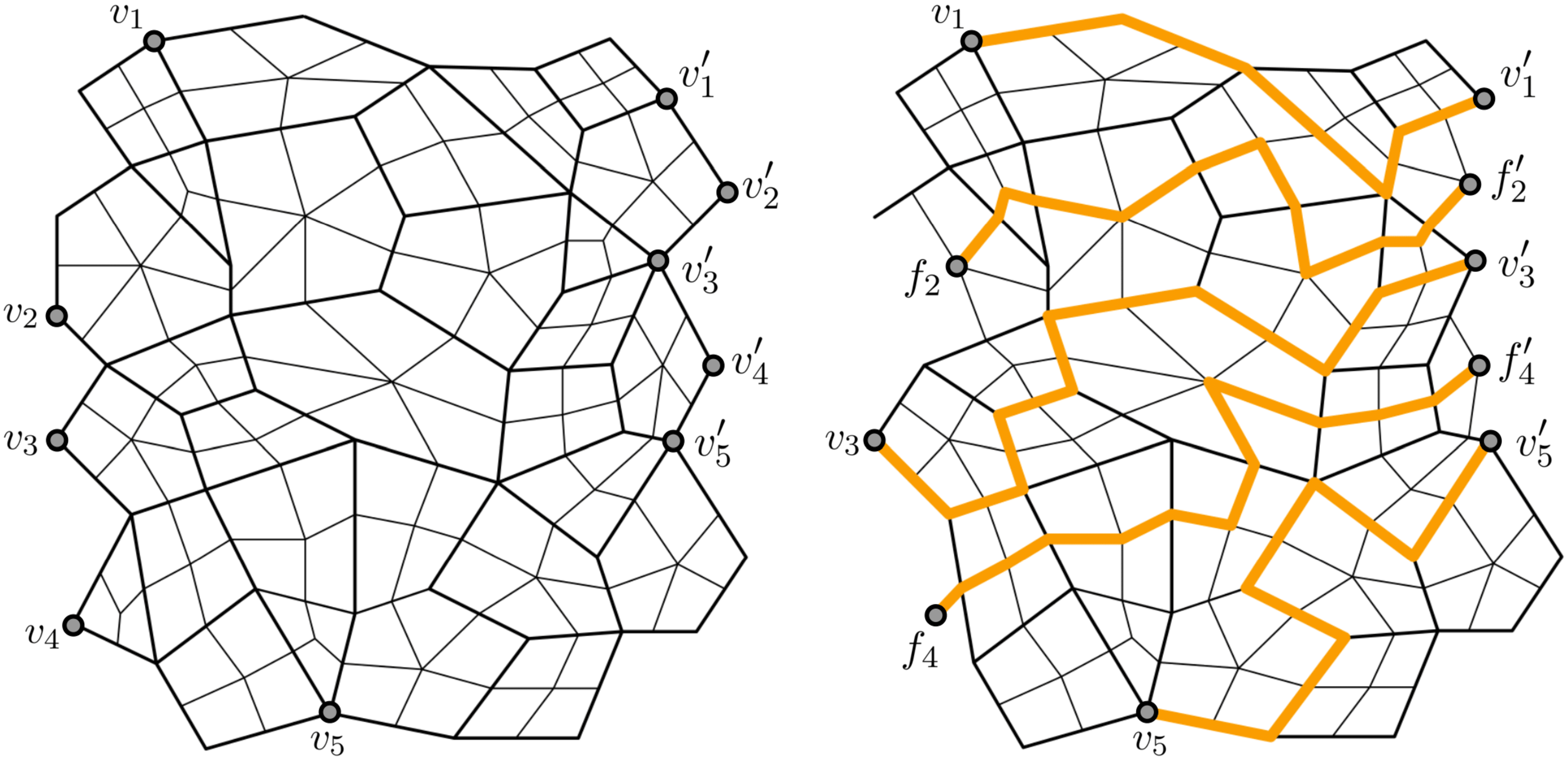}
    \caption{{\it Left.} An instance of the graph $H_{G}$; bold lines in the picture indicate the graph $G$ (the vertices $v_1,\ldots,v_5,v'_5,\ldots,v'_1$ are marked and labeled). {\it Right.} The graph $\widehat{H}_{G}$ --- obtained from the left picture by smashing in the four vertices $v_2,v_4,v'_4,v'_2$ --- together with five non-intersecting paths $P_1,P_2,P_3,P_4,P_5$ (from top to bottom).}
    \label{fef}
\end{figure}

Let the bands of $F$ be $B_1,\dotsc,B_k$. Since $B_i$ contains $u_i$ and $u'_i$, it contains also a path $P_i$ connecting them. The paths $P_1,\dotsc,P_k$ divide the interior of the boundary of the infinite face of $G$ into $k-1$ strips, plus a top cap (the portion above $P_1$) and a bottom cap (the portion below $P_k$). It follows that $T^*$ must be contained in one of the $k-1$ strips between the bands of $F$, or in one of the top or bottom caps.

Suppose $T^*$ is contained in one of the caps, without loss of generality the top cap. Then for any two distinct vertices $f$ and $f'$ adjacent to $z$ in $G^*\cup z$, both edges $\{f,z\}$ and $\{f',z\}$ must cross the boundary of the infinite face of $G$ along the arc $(u'_1,u_1)$. Therefore, the above arguments implies that $T^*$ contains a unique vertex adjacent to $z$ in $G^*\cup z$.

For the remaining case, consider the situation when $T^*$ is contained in the strip between the bands $B_{i-1}$ and $B_i$. Assume that $T^*$ has more than one  vertex adjacent to $z$ in $G^*\cup z$, and let $f$ and $f'$ be two of them. Then the above argument shows that one of the edges $\{f,z\}$ and $\{f',z\}$ must cross the boundary of the infinite face of $G$ along the arc $(u_{i-1},u_i)$, while the other must cross it along $(u'_{i},u'_{i-1})$. It also follows that there can be no other such vertex of $T^*$ besides $f$ and $f'$, as then at least two crossings would occur along the same arc. Therefore, if $T^*$ has at least two such vertices, it must have exactly two. By part (a), the other option is for $T^*$ to have exactly one such vertex. This completes the proof.
\end{proof}

A {\it rooted spanning forest} is a spanning forest $F$ with a distinguished vertex (called {\it root}) in each connected component; in particular, a rooted spanning forest which is also banded has one root in each band. Each edge $e$ of a rooted spanning tree $F$ is oriented so that in the unique path $P$ connecting its midpoint to the root in the connected component containing $e$, the edge $e$ points toward this root.

\begin{figure}
    \centering
    \includegraphics[width=.85\textwidth]{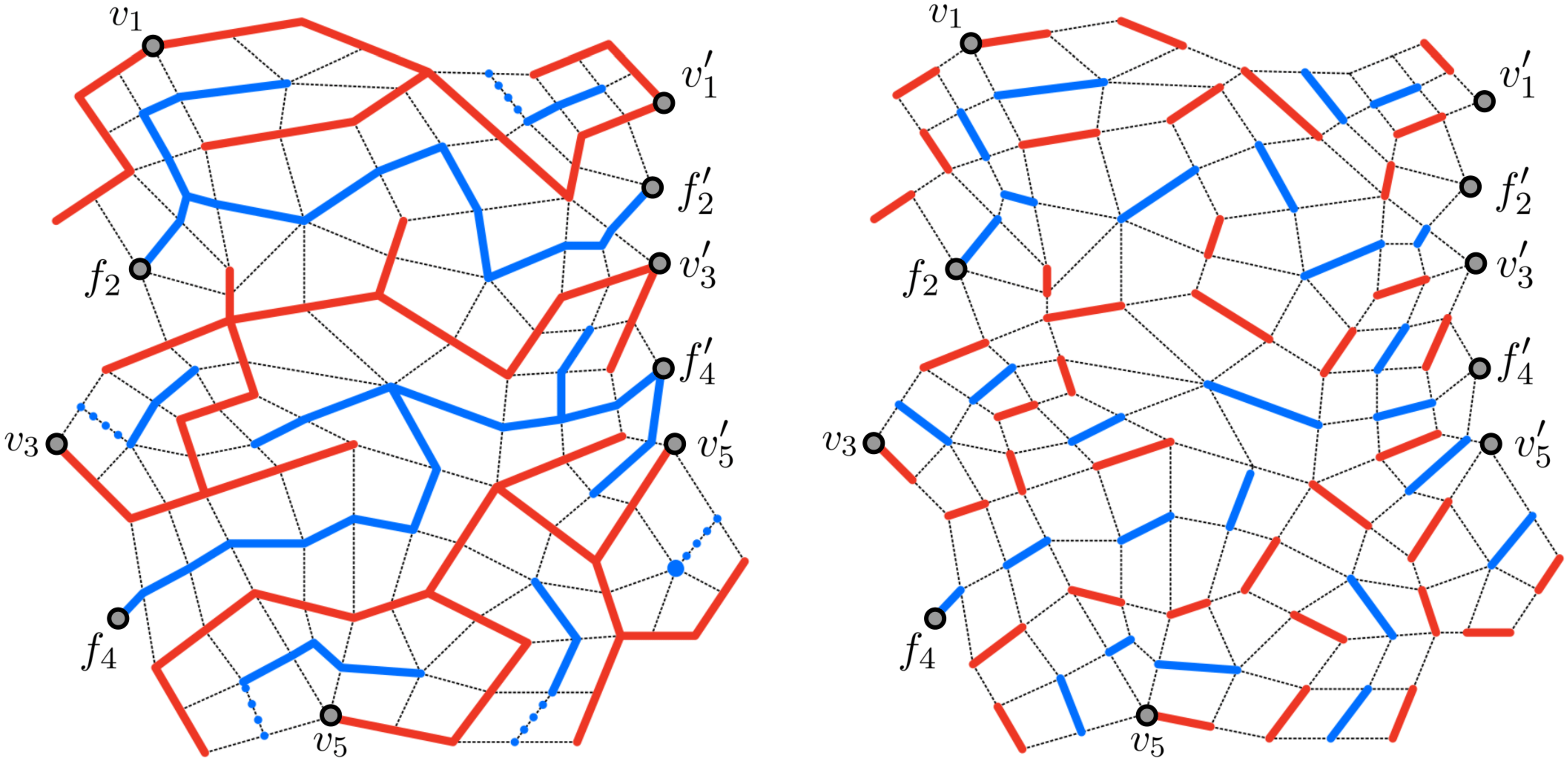}
    \caption{{\it Left.} An example of a spanning forest $F$ of $G\setminus \{v_2,v_4,\dotsc,v_{2n},v'_2,v'_4,\dotsc,v'_{2n}\}$ (shown in red) and the corresponding dual spanning forest $F^{*}$ (shown in blue solid lines), for the graph depicted on the left in Figure~\ref{fef}. {\it Right.} The corresponding perfect matching of $\widehat{H}_{G}\setminus \{v'_1,f'_2,v'_3,\dotsc,v'_{2n+1}\}$. One can obtain the left picture from the right picture by ``gliding,'' and the right picture from the left picture by selecting the tail half-edges of the rooted trees in $F$ and $F^*$ (each bay of $F$ is augmented by adding the dotted edge connecting it to $z$).}
    \label{feg}
\end{figure}

\begin{thm}
\label{tec}
Let $G$ be a weighted plane graph,
and weight all edges of its its planar dual graph $G^{*}$ by $1$\footnote{ The statement readily extends for an arbitrary weight on the edges of $G^*$ (see Remark 2). We present the result with weights 1 on $G^*$ because it essentially captures all generality, and is simpler to state (otherwise we need to define the weight of the spanning forest $F$ as the product of the weights of the edges of $F$ times the product of the weights of the edges in the channels.
.}. Weight each edge of the dual refinement $H_G$ by the weight of the edge of $G$ or $G^*$ that contains it\footnote{ As for Theorem \ref{tea}, weight by 1 each edge of $H_G$ connecting an edge-vertex on the infinite face to a face-vertex.}. Suppose the vertices $v_1,v_2,\dotsc,v_{2n+1},v'_{2n+1},\dotsc,v'_1$ satisfy the conditions $(ii)$--$(iv)$ stated at the beginning of this section $($see the picture on the left in Figure $\ref{fef}$ for an illustration; the underlying graph is the same as in Figure $\ref{fea}$, but the $v_i$'s are not required to be adjacent$)$. 

\medskip
$(${\rm a}$)$.\ There is a natural weight-preserving bijection between
%
the set of perfect matchings of
$\widehat{H}_{G}\setminus \{v'_1,f'_2,v'_3,\dotsc,v'_{2n+1}\}$
%
and
%
the set of banded spanning forests $F$ of
$G\setminus \{v_2,v_4,\dotsc,v_{2n},v'_2,v'_4,\dotsc,v'_{2n}\}$
rooted at $v'_1,v'_3,\dotsc,v'_{2n+1}$, with $v_{2i-1}$ and $v'_{2i-1}$ in the same band, and $f_{2i}$ and $f'_{2i}$ in the same channel, for all $i$.

\medskip
$(${\rm b}$)$. More generally, let $P_1,P_2,\dotsc,P_{2n+1}$ be non-intersecting paths on $H_G$ as in the statement of Theorem $\ref{tea}${\rm (b)}\footnote{ For ease of reference, we recall the conditions on these paths here: $P_{2i-1}$ is a path in $G$ connecting $v_{2i-1}$ to $v'_{2i-1}$, for $i=1,\dotsc,n+1$, while $P_{2i}$ is a path in the planar dual $G^*$ connecting $f_{2i}$ to $f'_{2i}$, for $i=1,\dotsc,n$. Recall also that for a path $P$, we denote by $\st_P$ the first $($starting$)$ vertex in the path, and by $\en_P$ the last $($ending$)$ vertex in the path; the paths $P_1,P_2,\dotsc,P_{2n+1}$ are regarded as starting at $v_1,f_2,v_3,f_4,\dotsc,f_{2n},v_{2n+1}$, respectively.}. 

Then for any subset $I\subseteq[2n+1]$, there is a weight-preserving bijection between

\medskip
$(1)$ the set of perfect matchings of
$\widehat{H}_{G}\setminus \{v'_1,f'_2,v'_3,\dotsc,v'_{2n+1}\}$
that contains all edges of the $($unique$)$ perfect matching of the path $P_{i}\setminus \en_{P_i}$ for $i\in I$,

\medskip
and

\medskip
$(2)$ the set of banded spanning forests $F$ of $G\setminus \{v_2,v_4,\dotsc,v_{2n},v'_2,v'_4,\dotsc,v'_{2n}\}$ rooted at the vertices $v'_1,v'_3,\dotsc,v'_{2n+1}$
such that

\medskip
$\bullet$ $v_{2i-1}$ and $v'_{2i-1}$ are in the same band, and  $f_{2i}$ and $f'_{2i}$ are in the same channel, for all $i$

$\bullet$ for all odd $i\in I$, the path $P_i$ is contained in $F$

$\bullet$ for all even $i\in I$, the path $P_i$ is contained in a channel of $F$.

\end{thm}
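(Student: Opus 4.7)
The plan is to construct mutually inverse maps $\Phi$ and $\Psi$ between the two sets, directly generalizing Temperley's construction from the proof of Theorem \ref{tda}. For the forward map $\Phi$: given a banded spanning forest $F$ satisfying the stated conditions, I orient each band so every edge points toward its designated root $v'_{2i-1}$, and include in $\mu_F = \Phi(F)$ all tail half-edges of $F$. For the dual, I form $F^*$, orient each channel toward its root $f'_{2i}$, and each bay toward its unique vertex $b$ adjacent to $z$ (whose existence is guaranteed by Lemma \ref{tecc}); I then include all tail half-edges of $F^*$, plus, for each bay, the half-edge $\{b, m_e\}$ in $H_G$, where $e$ is the boundary edge of $G$ whose dual in $G^* \cup z$ is the ``augmenting'' edge from $b$ to $z$ indicated in Figure \ref{feg}.

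For the reverse map $\Psi$: given a matching $\mu$, I recover $F$ by reading off, at each non-removed, non-root original vertex $v$, the unique frame half-edge $\{v, m_e\}$ incident to $v$ in $\mu$; the corresponding edge $e$ of $G$ goes into $F$, oriented outward from $v$. The dual forest $F^*$ is recovered analogously from dual-frame half-edges at non-removed face-vertices, with the exception that when the half-edge at $f$ is a boundary half-edge $\{f, m_e\}$ (so the ``dual edge'' would be $\{f, z\} \notin G^*$), this flags $f$ as a bay root rather than adding an edge to $F^*$. The path constraints for $i \in I$ are handled uniformly: if $F$ contains $P_i$ (for odd $i$) or $P_i$ sits inside a channel of $F$ (for even $i$) oriented toward the relevant root, then the tail half-edges along $P_i$ are exactly the edges of the unique perfect matching of $P_i \setminus \en_{P_i}$, so containment on the matching side is equivalent to containment on the forest side.

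Three items must be verified: (a) $\Phi(F)$ is a perfect matching of the stated graph; (b) $\Psi(\mu)$ produces a banded spanning forest with the required properties; (c) the two maps are inverses. For (a), an Euler-characteristic count paralleling the one in the proof of Theorem \ref{tda} shows that the edges contributed by $F$, by $F^*$, and by the bay augmentations total $\epsilon - 4n$, which equals half the number of vertices in $\widehat{H}_G \setminus \{v'_1, f'_2, v'_3, \dots, v'_{2n+1}\}$. For (b), the key step is cycle-freeness of $\Psi(\mu)$: any cycle in $G$ formed by the outgoing edges assigned by $\Psi$ would, by Lemma \ref{txx}, force an odd number of $H_G$-vertices in its interior, contradicting that $\mu$ restricts to a perfect matching there; the parallel argument on $G^*$ yields cycle-freeness of the dual. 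Given cycle-freeness, the cyclic order of the $v_i$'s and $v'_i$'s on the infinite face (condition $(iii)$ at the beginning of the section) forces each tree of $\Psi(\mu)$ to contain exactly one pair $(v_{2i-1}, v'_{2i-1})$, and each channel of its dual to contain exactly one pair $(f_{2i}, f'_{2i})$. Claim (c) then follows by direct comparison of the two constructions.

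The main technical obstacle is the treatment of bays: Lemma \ref{tecc}(b) identifies a unique vertex of the bay adjacent to $z$, but in full generality this vertex could connect to $z$ via several parallel edges in $G^* \cup z$, corresponding to several boundary edges of its face that share the infinite face. The resolution is that the boundary half-edges $\{b, m_e\}$ available for matching $b$ inside $\widehat{H}_G$ (namely those with $e$ not in $F$ and $m_e$ not smashed away) pair up canonically with bay roots via a counterclockwise sweep along the boundary of the infinite face; this pairing, combined with the cycle-freeness argument above, yields a canonical augmenting edge for each bay and completes the definition of $\Phi$.
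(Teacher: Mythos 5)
Your proposal follows essentially the same route as the paper's proof: the forest-to-matching direction via tail half-edges of the rooted bands, channels, and augmented bays (using Lemma \ref{tecc}), the matching-to-forest direction via the unique outgoing frame half-edge at each original vertex (which is exactly the paper's ``gliding''), acyclicity via Lemma \ref{txx}, and the observation that the constraints indexed by $I$ translate verbatim between the two sides because the tail half-edges along an oriented $P_i$ are precisely the unique perfect matching of $P_i$ minus its endpoint. Your remark about bays whose unique $z$-adjacent face shares several edges with the infinite face is a fair point (the paper is also terse there), though your ``counterclockwise sweep'' is left unspecified; in fact the matching $\mu$ itself dictates the augmenting half-edge in one direction, and one must check the count works out in the other.

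There is, however, one step where the justification you give would not suffice as written: you claim that, \emph{given cycle-freeness}, condition $(iii)$ (the cyclic order) forces each tree of $\Psi(\mu)$ to contain exactly one pair $(v_{2i-1},v'_{2i-1})$ and each channel exactly one pair $(f_{2i},f'_{2i})$. Cycle-freeness of the forest does not rule out the possibility that the outgoing-edge paths from $v_{2i-1}$ and $v_{2j-1}$ \emph{merge}, i.e.\ end up in the same tree at the same root, leaving some other tree with no $v$-vertex at all; no cycle of the forest is created by such a merge. The paper excludes this by closing up the two initial path segments with the boundary arc of $G$ between $v_{2i-1}$ and $v_{2j-1}$ to form a cycle \emph{of $G$} (not of the forest) and applying Lemma \ref{txx} to that cycle to get an odd number of vertices of $\widehat{H}_G\setminus\{v'_1,f'_2,\dotsc,v'_{2n+1}\}$ in its interior, contradicting the fact that $\mu$ pairs them up. Likewise, showing that the dual path from $f_{2i}$ ends at $f'_{2i}$ (rather than escaping to $z$ through the boundary arc $(v_{2i-1},v_{2i+1})$ or reaching some other $f'_{2j}$) requires confining it between the frame paths from $v_{2i-1}$ and $v_{2i+1}$, ruling out an exit through the boundary again via Lemma \ref{txx} applied in the dual graph $G^*\cup z$, and then invoking condition $(ii)$ --- that $v'_{2i-1}$ and $v'_{2i+1}$ are next-nearest neighbours on the boundary --- which your argument never uses. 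These are fillable gaps, and all the needed tools already appear in your sketch, but the banding and channel-matching claims do need these extra arguments rather than cycle-freeness plus cyclic order alone.
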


\begin{proof}
We prove part (a) first. Let $\mu$ be a perfect matching of $\widehat{H}_{G}\setminus \{v'_1,f'_2,v'_3,\dotsc,v'_{2n+1}\}$. Based on it, we construct a spanning forest $F_\mu$ of $G_0:=G\setminus \{v_2,v_4,\dotsc,v_{2n},v'_2,v'_4,\dotsc,v'_{2n}\}$ as follows.

``Glide'' from each vertex $u$ of $G_0$, using the construction described in Section 3. More precisely, starting at $u$, follow the edge $e$ of $\mu$ containing it\footnote{ Such an edge always exist, as the vertex set of $G_0$ is contained in the vertex set of the graph $\widehat{H}_{G}\setminus \{v'_1,f'_2,v'_3,\dotsc,v'_{2n+1}\}$, of which $\mu$ is a perfect matching.}, and continue along the edge $\overline{e}$ of $G$ containing $e$ until reaching the other end of $\overline{e}$; from here, follow a new edge of $\mu$ and move along it
to the next encountered vertex of $G_0$,
and repeat this until we cannot glide anymore from the vertex of $G$ that we reached. Clearly, this happens only if the vertex we ended at is one of $v'_1,v'_3,\dotsc,v'_{2n+1}$. Thus we obtain a path\footnote{ There can be no self-intersection in $Q_u$, as that would produce a cycle, and then Lemma \ref{txx} would lead to a contradiction.} $Q_u$ on the edges of $G_0$, starting at $u$ and ending at one of $v'_1,v'_3,\dotsc,v'_{2n+1}$. Orient the edges of $Q_u$ so that they point towards the ending point; note that under this orientation, all the edges of $\mu$ contained in $Q_u$ are tail half-edges. Define $F_\mu$ to be the union of all these paths $Q_u$, over all vertices $u$ of $G_0$. We claim that $F_\mu$ is a banded spanning forest of $G_0$ rooted at the vertices $v'_1,v'_3,\dotsc,v'_{2n+1}$, with $v_{2i-1}$ and $v'_{2i-1}$ in the same band, and $f_{2i}$ and $f'_{2i}$ in the same channel, for all $i$.

Note that, by construction, if two such gliding paths $P_u$ and $P_v$ meet at a vertex, they will coincide from that point onward. This implies that if $V_{2i-1}$ consisting of those vertices $u$ of $G_0$ for which $P_u$ ends at $v'_{2i-1}$, then $T_{2i-1}:=\cup_{u\in V_{2i-1}} Q_u$ is a tree rooted at $v'_{2i-1}$. Since these trees are determined by fixing the ending point of the gliding paths, they are disjoint. Therefore, since each vertex of $G_0$ is the starting point of a gliding path, the union of these trees --- which is $F$ --- is a spanning forest of $G_0$, consisting of the trees $T_1,T_3,\dotsc,T_{2n+1}$, rooted at the vertices $v'_1,v'_3,\dotsc,v'_{2n+1}$, respectively.

\begin{figure}
    \centering
    \includegraphics[width=.85\textwidth]{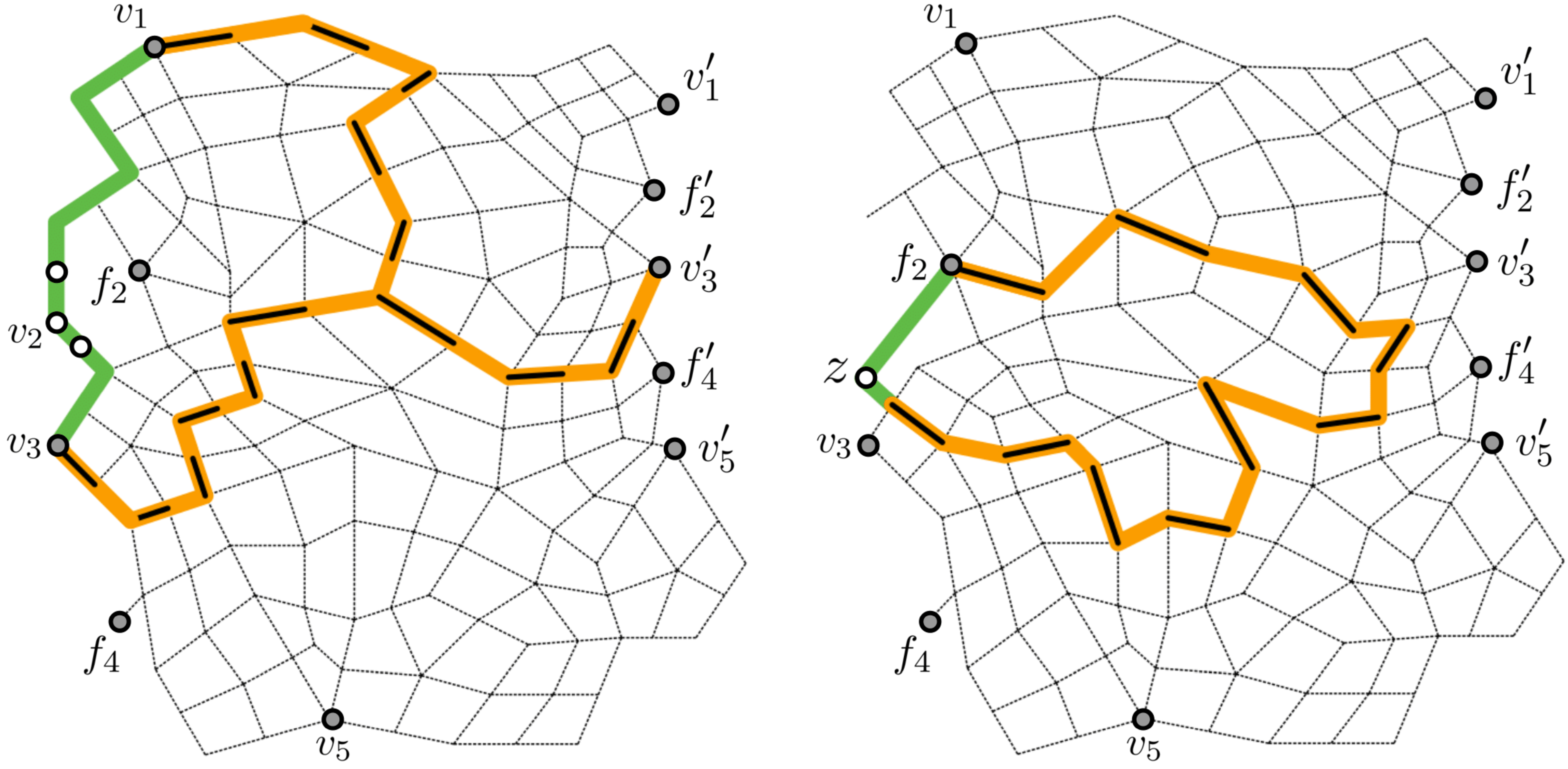}
    \caption{The left picture illustrates why two paths starting from $v_{1}$ and $v_{3}$ that are generated by sequences of gliding moves do not intersect. The vertex $v_{2}$ and its two adjacent edge-vertices are denoted by empty circles. The picture on the right illustrates why the gliding path starting from $f_{2}$ cannot terminate at an edge-vertex on the boundary in between $v_{1}$ and $v_{3}$. The face-vertex for the infinite face, denoted by $z$, is marked by an empty circle.}
    \label{fei}
\end{figure}

To see that $F$ is banded, note that the paths\footnote{ For notational simplicity, we write $Q_{2i-1}$ for the path $Q_{v_{2i-1}}$ obtained by gliding from $v_{2i-1}$.} $Q_1,Q_3,\dotsc,Q_{2n+1}$ are non-intersecting. 
Indeed, suppose $Q_{2i-1}$ and $Q_{2j-1}$ ($i<j$) do intersect, and let $u$ be their first common vertex (this is illustrated in the picture on the left in Figure \ref{fei}). Then if $P$ is the path in $G$ connecting the endpoints of the counterclockwise arc $(v_{2i-1},v_{2j-1})$, the union of $P$ with the initial portions of $Q_{2i-1}$ and $Q_{2j-1}$ up to $u$ forms a cycle $C$ in $G$. But then by Lemma~\ref{txx}, the number of vertices of $\widehat{H}_{G}\setminus \{v'_1,f'_2,v'_3,\dotsc,v'_{2n+1}\}$ in the interior of $C$ is odd, contradicting the fact that they are matched up in pairs by $\mu$. Therefore, using also condition $(iii)$, it follows that $Q_{2i-1}$ connects $v_{2i-1}$ to $v'_{2i-1}$, and thus these two vertices are in the same connected component $T_{2i-1}$ of $F$, for $i=1,\dotsc,n+1$. This shows that $F$ is banded, with $v_{2i-1}$ and $v'_{2i-1}$ in the same band for all $i$.

To see that $f_{2i}$ and $f'_{2i}$ are in the same channel of $F$, note that, since $f_{2i}$ is in the counterclockwise arc $(v_{2i-1},v_{2i+1})$ of the boundary of $\widehat{H}_{G}\setminus \{v'_1,f'_2,v'_3,\dotsc,v'_{2n+1}\}$, the gliding path $Q_{2i}$ starting at\footnote{ Recall that for face-vertices of $\widehat{H}_{G}\setminus \{v'_1,f'_2,v'_3,\dotsc,v'_{2n+1}\}$, gliding takes place on the ``dual frame'', i.e.\ along the edges of the dual graph $G^*\cup z$ of $G$.} $f_{2i}$ is confined to stay in the strip between the above defined paths $Q_{2i-1}$ and $Q_{2i+1}$ (indeed, a crossing point between a gliding path on the frame and one on the dual frame cannot exist, as such a vertex would be incident to two different edges of $\mu$). Furthermore, the gliding path $Q_{2i}$ cannot exit this strip along the counterclockwise arc $(v_{2i-1},v_{2i+1})$, because this would give rise to a cycle in the dual graph $G^*\cup z$ (see the picture on the right in Figure \ref{fei}), which by Lemma \ref{txx} contains an odd number of vertices of $\widehat{H}_{G}\setminus \{v'_1,f'_2,v'_3,\dotsc,v'_{2n+1}\}$ in its interior, contradicting the fact that they are matched by $\mu$.
Therefore, due to the fact that $v'_{2i-1}$ and $v'_{2i+1}$ are next-nearest neighbors on the boundary of the infinite face of $G$ (this is where condition $(ii)$ stated at the beginning of this section is essential!), the gliding path $Q_{2i}$ must end at $f'_{2i}$. This completes the proof of our claim, and shows that the above-described map $\mu\mapsto F_\mu$ is well-defined.

In order to describe the converse construction, we also need to glide from each face-vertex $f$ of $\widehat{H}_{G}\setminus \{v'_1,f'_2,v'_3,\dotsc,v'_{2n+1}\}$. Each such gliding path must end
either
at one of the vertices $f'_{2},f'_4,\dotsc,f'_{2n}$,
or at $z$. Orient the edges of these paths so that they point towards the ending point; again, note that under this orientation, all the edges of $\mu$ contained in these paths are tail half-edges. The same arguments as above show that the union of all the gliding paths starting at face-vertices --- with the convention that from the gliding paths ending at $z$ we discard the last edge --- forms a forest. One readily verifies that this forest is precisely the dual forest $F^*$ of $F$.
The connected component of $F^*$ which contains $f'_{2i}$ is a tree rooted at $f'_{2i}$, for all $i$. The remaining connected components of $F^*$ are trees with all edges oriented towards $z$.

We can now state the converse construction. Given a banded spanning forest of $G_0$ rooted at the vertices $v'_1,v'_3,\dotsc,v'_{2n+1}$, with $v_{2i-1}$ and $v'_{2i-1}$ in the same band, and $f_{2i}$ and $f'_{2i}$ in the same channel, for all $i$, we define a perfect matching $\mu_F$ of $\widehat{H}_{G}\setminus \{v'_1,f'_2,v'_3,\dotsc,v'_{2n+1}\}$ as follows. Consider the dual spanning forest $F^*$ of $F$. If a connected component $T^*$ of it contains $f'_{2i}$ for some $i$ (i.e.\ $T^*$ is a channel of $F$), orient the edges of $T^*$ by rooting it at $f'_{2i}$.
Otherwise, by Lemma \ref{tecc}, $T^*$ is a bay of $F$; consider the edge $\{u,z\}$, where $u$ is the unique vertex in $T^*$ adjacent to $z$ (cf.\ Lemma \ref{tecc}); then $T^*\cup\{u,z\}$ is a tree; call it an augmented bay, denote it by $\overline{T^*}$, and orient its edges by rooting it at $z$.
Then $\mu_F$ is defined to consist of the tail half-edges of the edges in $F$, the tail half-edges of the edges of the channels of $F$, and the tail half-edges of the edges of the augmented bays $\overline{T^*}$ of $F$.

One readily checks that $\mu_F$ is a perfect matching of $\mu_F$ of $\widehat{H}_{G}\setminus \{v'_1,f'_2,v'_3,\dotsc,v'_{2n+1}\}$. Indeed, these edges are clearly disjoint. Furthermore, each original vertex is matched because it is contained in the spanning forest $F$, each face-vertex is matched because it is contained in the dual spanning forest $F^*$, and each edge-vertex is matched because it is contained either in the edge of $G$ containing it, or in the edge of $G^*$ containing it. So the map $F\mapsto\mu_F$ is well-defined.

By construction, the maps $\mu\mapsto F_\mu$ and $F\mapsto\mu_F$ are inverses of each other. This completes the proof of part (a).

Part (b) follows by the same arguments. The only difference is that, since we are assuming that the perfect matching $\mu$ of $\widehat{H}_{G}\setminus \{v'_1,f'_2,v'_3,\dotsc,v'_{2n+1}\}$ contains the edges of the unique perfect matching of the path $P_i$, for all $i\in I$, the path $Q_i$ in the proof of part (a) will coincide with $P_i$, for all $i\in I$. By our construction of the bijection that proved (a), this is equivalent to the requirement that the corresponding spanning forest $F=F_\mu$ contains the odd indexed paths in $\{P_i:i\in I\}$, and its dual spanning forest $F^*$ contains the even indexed paths in $\{P_i:i\in I\}$. This completes the proof.
\end{proof}  

{\it Remark $2$.} The statement of Theorem \ref{tec} readily extends to the case when there is an arbitrary weight on the edges of the planar dual graph $G^*$, and the edges of $H_G$ contained in edges of $G^*$ are weighted by the weights of the latter. The only difference is that in this case the weight of a spanning forest $F$ of $G\setminus\{v_2,v_4,\dotsc,v_{2n},v'_2,v'_4,\dotsc,v'_{2n}\}$ needs to be defined to be the product of the weights of its constituent edges, times the product of the weights of the edges of $G^*$ making up $F^*$. Then the arguments in the proof go through without change.

\medskip
{\it Remark $3$.} Theorem \ref{tec} provides an alternative way to prove Theorem \ref{tea}. Indeed, if condition $(i)$ stated at the beginning of this section is also met, then Theorem \ref{tec} can also be applied with the roles of the vertices denoted by primed and unprimed symbols swapped. Then the sets of perfect matchings arising from these two applications of Theorem \ref{tec} are precisely the ones involved in the statement of Theorem \ref{tea}. On the other hand, the sets of spanning trees arising from the two applications of Theorem \ref{tec} only differ by the choice of their roots, so they are equinumerous.

\vspace{5mm}


\section{Independence results for spanning trees chosen uniformly at random} 

In this section we present more results concerning the equality of the number of spanning trees that satisfy certain conditions. The involved graphs are symmetric planar graphs. Like in the case of Theorem \ref{tbc}, the proofs will again be based on a bijection between the perfect matchings of certain related graphs --- namely, a refinement of the second author's bijection in \cite[Lemma1.1]{ciucu1997enumeration}.

The following elementary lemma will be useful in our proofs.

\begin{lemma}
\label{txx}
Let $G$ be a plane graph, $G^*$ its planar dual and $H_G$ the dual refinement of $G$ described at the beginning of Section $2$. Let $C$ be a cycle in $G$. Then the number of vertices of $H_G$ in the interior of $C$ is odd.

\end{lemma}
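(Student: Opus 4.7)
The plan is to reduce the lemma to a single application of Euler's formula applied to the subgraph of $G$ lying in the closed interior of $C$. The three types of vertices of $H_G$ (original, edge, face) are counted separately, and Euler's identity then forces the parity.

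First I would set up notation. Let $C$ have $k$ vertices and $k$ edges, and let $G_C$ be the subgraph of $G$ consisting of $C$ together with every vertex, edge, and bounded face of $G$ that lies in the closed interior of $C$. Define
\[
V_{\mathrm{in}} = \#\{\text{vertices of } G \text{ strictly inside } C\}, \quad
E_{\mathrm{in}} = \#\{\text{edges of } G \text{ strictly inside } C\},
\]
\[
F_{\mathrm{in}} = \#\{\text{bounded faces of } G \text{ lying inside } C\}.
\]
By the definition of $H_G$, each original vertex strictly inside $C$ contributes one vertex to $H_G$ inside $C$; each edge of $G$ strictly inside $C$ contributes one edge-vertex (its midpoint is strictly inside $C$, whereas midpoints of edges of $C$ lie on $C$); and each bounded face of $G$ lying inside $C$ contributes one face-vertex. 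Therefore the number of vertices of $H_G$ strictly inside $C$ equals $V_{\mathrm{in}} + E_{\mathrm{in}} + F_{\mathrm{in}}$.

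Next I would verify that $G_C$ is a connected plane graph. Any vertex $v$ strictly inside $C$ admits a path in $G$ to some vertex of $C$ (since $G$ is connected); taking the initial segment up to the first vertex of this path that lies on $C$ gives a path entirely contained in $G_C$, so $v$ is connected to $C$ in $G_C$. Hence $G_C$ is connected, and its face structure consists of the $F_{\mathrm{in}}$ bounded faces inside $C$ together with one unbounded face (the exterior of $C$). Counting gives
\[
V(G_C) = V_{\mathrm{in}} + k, \qquad E(G_C) = E_{\mathrm{in}} + k, \qquad F(G_C) = F_{\mathrm{in}} + 1.
\]

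Now I would invoke Euler's formula $V(G_C) - E(G_C) + F(G_C) = 2$. The $k$'s cancel, leaving
\[
V_{\mathrm{in}} - E_{\mathrm{in}} + F_{\mathrm{in}} = 1,
\]
so $V_{\mathrm{in}} + F_{\mathrm{in}} = E_{\mathrm{in}} + 1$, and consequently
\[
V_{\mathrm{in}} + E_{\mathrm{in}} + F_{\mathrm{in}} = 2E_{\mathrm{in}} + 1,
\]
which is odd. This is the desired conclusion.

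The argument is essentially a bookkeeping identity, so there is no serious obstacle. The only point requiring a moment of care is the connectivity of $G_C$ (needed to apply Euler in the clean form above), which I handle with the first-hit argument described in the second paragraph; an alternative is to use the disconnected form $V - E + F = 1 + c$ of Euler and observe that, if $G$ is connected, $c = 1$.
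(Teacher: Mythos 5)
Your proof is correct, and it rests on the same essential identity as the paper's: the vertices of $H_G$ inside $C$ are counted by $v+e+f$ (original, edge- and face-vertices), and Euler's formula forces $v-e+f=1$, hence $v+e+f=2e+1$. The difference is where you apply Euler: the paper applies it to the subgraph $K$ of $H_G$ induced by the vertices on or inside $C$, which requires counting the edges of $K$ ($4e+3n$) and its bounded faces ($2e+n$) before everything telescopes down to $v-e+f=1$; you apply it directly to the subgraph $G_C$ of $G$ in the closed interior of $C$, where the cancellation of the $k$ boundary terms is immediate. Your route is the lighter of the two, and you are also slightly more careful than the paper in flagging the connectivity hypothesis needed for the clean form of Euler's formula (the paper assumes connectedness globally elsewhere). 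One small point of phrasing to fix: your $E_{\mathrm{in}}$ should be understood as the edges of $G$ not on $C$ whose interiors lie in the open interior of $C$ --- their endpoints may lie on $C$ (chords and edges attached to $C$ from inside), and these still contribute an edge-vertex strictly inside $C$; your parenthetical about midpoints shows you intend this, but the words ``strictly inside'' could be misread as requiring both endpoints to be interior.
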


\begin{proof} We apply Euler's theorem to the subgraph $K$ of $H_G$ induced by the vertices on or inside~$C$. Let $v$ be the number of vertices of $G$ in the interior of $C$, $e$ the number of edges of $G$ in the interior\footnote{Note that endpoints of such an edge may be on $C$.} of $C$, and $f$ the number of faces of $G$ inside $C$. Let $n$ be the number of edges (of $G$) in $C$.

  The vertex set of the graph $K$ consists of $v+f+e$ vertices in the interior of $C$ and $2n$ vertices along $C$ ($n$ original vertices and $n$ edge-vertices). Each edge $e$ of $G$ in the interior of $C$ produces four edges of $K$, while each edge of $G$ in $C$ generates three edges of $K$, for a total of $4e+3n$ edges in $K$. The number of bounded faces of $K$ is obtained by adding up the number of vertices of each of the $f$ faces of $G$ inside $C$ --- equivalently, by adding up the number of {\it edges} of each of the $f$ faces of $G$ inside $C$; since edges in the interior of $C$ bound precisely two faces, this equals $2e+n$. Therefore, Euler's theorem for the graph $K$ gives
\begin{equation}  
(v+e+f+2n)-(4e+3n)+(2e+n)=(v+e+f)-2e=1. 
\label{eca}
\end{equation}  
Thus $v+e+f$, which is the number of vertices of $H_G$ in the interior of $C$, is odd.
\end{proof}  

Let $G$ be a weighted plane graph symmetric across the symmetry axis $\ell$ (i.e.\ $G$ comes equipped with a weight function on the edges which is constant on the orbits of the reflection across $\ell$); we consider $\ell$ to be horizontal. The next result --- which is the refinement of \cite[Lemma 1.1]{ciucu1997enumeration} mentioned above --- concerns perfect matchings of $G$, for whose existence a necessary condition is that $G$ has an even number of vertices on $\ell$. Assume therefore that this condition holds, and label the vertices of $G$ on $\ell$ from left to right $a_1,b_1,a_2,b_2,\dotsc,a_k,b_k$. Since the matching generating function\footnote{ The matching generating function of a weighted graph $G$ is the sum of the weights of the perfect matchings of~$G$, where the weight of a perfect matching is defined to be the product of the weights of its constituent edges. When all weights are 1 this becomes the number $\M(G)$ of perfect matchings of $G$.} is clearly multiplicative with respect to disjoint unions of graphs, we will henceforth also assume that all graphs under consideration are connected.

Given an edge $e$ of $G$, denote by $e'$ the reflection of $e$ across $\ell$.

\begin{thm} 
\label{tcb}
Let $G$ be a weighted symmetric plane graph, and denote its vertices on the symmetry axis, from left to right, by $a_1,b_1,a_2,b_2,\dotsc,a_k,b_k$. Let $E=\{e_1,\dotsc,e_s\}$ consist of $s\leq k$ pairwise disjoint edges of $G$, each incident to exactly one\footnote{ The assumption that there is no edge in $E$ such that both its endpoints are $a_i$'s can be made without loss of generality. Indeed, suppose there is such an edge $e$. Then its reflection $e'$, which is also and edge of $G$, has to be the same as $e$, as $G$ has no parallel edges. But then $e$ must be along $\ell$, and therefore has to contain at least one vertex~$b_i$, a contradiction.} of the $a_j$'s.
For each $I\subset[s]$, consider the subset of the set ${\mathcal M}(G)$ of perfect matchings of $G$ defined by
\begin{equation}
{\mathcal M}_E^I(G):=\{\mu\in{\mathcal M}(G): e_i\in\mu \text{\rm\ for\ } i\in I, e'_i\in\mu \text{\rm\ for\ } i\in [s]\setminus I\}.
\label{ecb}
\end{equation}
Then the sum of the weights of the perfect matchings in ${\mathcal M}_E^I(G)$ is the same for all $I\subset[s]$.

\end{thm}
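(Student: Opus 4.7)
My plan is to reduce the claim to comparing matching counts of vertex-deleted subgraphs of a reflectively symmetric graph, and then build a weight-preserving bijection by flipping alternating paths in a symmetric difference. For $\mu \in \mathcal{M}_E^I(G)$, the constraints force the matching partner of every constrained axis vertex $a_{j(i)}$ to be $u_i$ (if $i \in I$) or $u'_i$ (if $i \notin I$), where $u_i$ and $u'_i := \tau(u_i)$ denote the non-axis endpoints of $e_i$ and $e'_i$; this gives the factorization
\begin{equation*}
\sum_{\mu \in \mathcal{M}_E^I(G)} w(\mu) \;=\; \Bigl(\prod_{i=1}^s w(e_i)\Bigr)\cdot \sum_{\nu \in \mathcal{M}(G_0\setminus R_I)} w(\nu),
\end{equation*}
where $G_0 := G \setminus \{a_{j(1)}, \dotsc, a_{j(s)}\}$ is planar and still reflectively symmetric across $\ell$, and $R_I := \{u_i : i \in I\} \cup \{u'_i : i \in [s]\setminus I\}$ is a transversal of the $s$ reflection pairs $\{u_i, u'_i\}$. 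Since $w$ is $\tau$-invariant, $w(e_i) = w(e'_i)$, so the prefactor does not depend on $I$ and the theorem reduces to showing that the weighted matching count of $G_0 \setminus R_I$ is independent of the transversal $R_I$.

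By iteration it suffices to biject $\mathcal{M}(G_0 \setminus R_I)$ with $\mathcal{M}(G_0 \setminus R_{I \triangle \{i_0\}})$ for a single $i_0 \in [s]$; without loss of generality $i_0 \notin I$, so $u'_{i_0} \in R_I$ and $u_{i_0} \notin R_I$. Given $\mu$ and its reflection $\mu' := \tau(\mu) \in \mathcal{M}(G_0 \setminus R_{[s]\setminus I})$, consider $\mu \triangle \mu'$ as a subgraph of $G_0$. Since $R_I$ and $R_{[s]\setminus I}$ are disjoint, a degree count shows that the $2s$ vertices $\{u_i, u'_i : i \in [s]\}$ each have degree exactly $1$ in $\mu \triangle \mu'$ while all other vertices have degree $0$ or $2$, so $\mu \triangle \mu'$ decomposes into $s$ alternating paths (pairing up the $2s$ degree-$1$ vertices) together with some cycles, all preserved setwise by $\tau$. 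In the generic case the path $P$ containing $u_{i_0}$ is $\tau$-symmetric and has $u'_{i_0}$ as its other endpoint; then $\phi(\mu) := \mu \triangle P$ is a perfect matching of $G_0 \setminus R_{I \triangle \{i_0\}}$. Weight-preservation is automatic because $\tau$ pairs the $\mu$-edges with the $\mu'$-edges of the symmetric path $P$ into equal-weight pairs.

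The hard case will be when $u_{i_0}$ and $u'_{i_0}$ belong to different (reflection-paired) paths $P$ and $\tau(P)$ of $\mu \triangle \mu'$: any individual path flip toggles $R$-membership at exactly two vertices, so a naive flip of $P$ (or of both $P$ and $\tau(P)$) produces an element of $\mathcal{M}(G_0 \setminus R_{I \triangle \{i_0, j\}})$ for the index $j$ corresponding to $P$'s other endpoint, rather than the desired target. Resolving this entangled case is the technical core of the argument, and I expect to do so either by chaining together a carefully chosen sequence of path flips whose net effect is the single $i_0$-swap, or by an induction on $s$ that leverages both the reflection involution $\mu \mapsto \mu'$ (which already identifies $\mathcal{M}(G_0 \setminus R_I)$ with $\mathcal{M}(G_0 \setminus R_{[s]\setminus I})$) and the theorem applied with fewer than $s$ constrained edges.
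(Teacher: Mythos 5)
Your proposal sets up the right kind of move (flipping along components of a superposition of $\mu$ with its reflection) but leaves the crux unproved, and the missing piece is precisely the geometric input the paper relies on. The paper does not pass to vertex-deleted subgraphs: it keeps $\mu$ and $\mu'=\tau(\mu)$ as perfect matchings of all of $G$, so $\mu\cup\mu'$ decomposes into cycles only, and it looks at the cycle $C$ through the axis endpoint $a_{j(i_0)}$ of $e_{i_0}$. Since $C$ contains a fixed point of the reflection, $C$ is its own mirror image; a symmetric simple closed curve in the plane meets $\ell$ in exactly two points (the arcs above $\ell$ induce a noncrossing pairing of the crossing points, and the arcs below induce the reflected, hence identical, pairing, so more than one pair would disconnect $C$ into several cycles), and a parity count of the vertices enclosed by $C$ --- which $\mu$ must match among themselves --- forces the second axis vertex of $C$ to be a $b_j$, not an $a_j$. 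Swapping $\mu\cap C$ for $\mu'\cap C$ therefore toggles the constraint at $a_{j(i_0)}$ and at no other $a_{j(l)}$; that is the entire proof (the paper cites this cycle analysis from Lemma 1.1 of the 1997 reference). Translated into your reduced picture, this argument shows that your ``hard case'' simply never occurs: the path of $\mu\triangle\mu'$ through $u_{i_0}$ is forced to end at $u'_{i_0}$.

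That forcing is the whole theorem, and nothing in your write-up supplies it: your bijection step uses only the abstract involution $\tau$, never planarity, and never the hypothesis that the constrained vertices are the \emph{odd-indexed} axis vertices $a_j$ (i.e.\ that a $b_j$ separates any two of them along $\ell$). Both hypotheses are essential. For instance, take $G$ to be a hexagon drawn symmetrically with two antipodal vertices $p,q$ on $\ell$, and try to constrain edges at both $p$ and $q$: the two perfect matchings realize exactly the sign patterns $I=\{1,2\}$ and $I=\emptyset$, so the counts $1,0,0,1$ are not equal --- consistent with the theorem only because its labeling forces $q$ to be a $b$-vertex, hence ineligible. Since your argument would apply verbatim to this configuration, it cannot be completed by ``chaining path flips'' or by induction on $s$ alone; you must bring in the Jordan-curve and enclosed-vertex parity argument (or equivalently work with full matchings of $G$ and invoke the cited cycle lemma). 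A separate minor point: your phrase ``non-axis endpoint $u_i$'' excludes the case, explicitly allowed in the theorem, that $e_i$ lies along $\ell$ (then $u_i=u_i'$ is a $b_j$ and index $i$ contributes trivially); this is harmless but should be said.
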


Note that some of the edges in $E$ could be along the symmetry axis. If $e$ is such an edge, then~$e'$ is the same as $e$.

\begin{proof} 
  It is enough to prove the statement of the theorem for any two subsets $I,I'\subset[s]$ with $I=I'\cup\{i\}$, for some $i\in[s]\setminus I'$. Let $\mu\in{\mathcal M}_E^I(G)$, and let $\mu'$ be the reflection of $\mu$ across $\ell$. Then $\mu\cup\mu'$ is a disjoint union of cycles of $G$; let $C$ be the cycle containing
the vertex of $e_i$ on $\ell$ (which by assumption is one of the $a_j$'s).
By the argument in the proof of \cite[Lemma 1.1]{ciucu1997enumeration}, $C$ contains exactly one other vertex in $G\cap\ell$
, and that other vertex is a $b_j$.

 Deﬁne $\mu''$ to be the perfect matching of $G$ obtained from $\mu$ by replacing $\mu\cap C$ by $\mu'\cap C$. Then clearly $\mu''$ is an element of ${\mathcal M}_E^{I\setminus \{i\}}(G)$, and the correspondence $\mu\mapsto\mu''$ is a weight-preserving
involution between the elements of ${\mathcal M}_E^I(G)$ and those of ${\mathcal M}_E^{I'}(G)$.
\end{proof}  



\medskip
 {\it Remark $4$.} We point out that \cite[Lemma 1.1]{ciucu1997enumeration} follows from Theorem \ref{tcb} as a corollary. Indeed, partition the perfect matchings of the reduced subgraphs of \cite[Lemma 1.1]{ciucu1997enumeration} according to which edge~$e_i$ incident to $a_i$ they contain, for all $i=1,\dotsc,k$. Then the classes of the resulting partitions can be matched in pairs that are in bijective correspondence, by
 Theorem \ref{tcb}.

\medskip
\begin{thm} 
\label{tcc}
Let $G$ be a weighted plane graph symmetric about the symmetry axis $\ell$, and denote its vertices on $\ell$, from left to right, by $a_1,a_2,\dotsc,a_k$. Let $E=\{e_1,\dotsc,e_s\}$ consist of~$s\leq k$ pairwise disjoint edges of $G$, each $e_i$ being incident to exactly one of the\footnote{ In particular, no edge in $E$ is along $\ell$; however, some other edges of $G$ may be along $\ell$.} $a_j$'s. Orient each edge $e$ in $E$ so that it points away from $\ell$. Let $v$ be a vertex of $G$ that is both on the unbounded face and on $\ell$, and is not incident to any of the edges in $E$. 
For each $I\subset[s]$, consider the subset of the set~${\mathcal T}^v(G)$ of spanning trees of $G$ rooted at $v$ defined by\footnote{ For each oriented edge $e$, its reflection $e'$ across $\ell$ has the orientation induced by the reflection.}
\begin{equation}
{\mathcal T}_{E,I}^v(G):=\{T\in{\mathcal T}^v(G): {e_i}\in T \text{\rm\ for\ } i\in I, {e'_i}\in T \text{\rm\ for\ } i\in [s]\setminus I\}.
\label{ecc}
\end{equation}
$($In other words, ${\mathcal T}_{E,I}^v(G)$ is the set of all spanning trees $T$ of $G$ rooted at $v$ in which the unique edge exiting $a_i$ is $e_i$, for $i\in I$, and $e_i'$, for $i\in [s]\setminus I$.$)$
Then the sum of the weights of the spanning trees in ${\mathcal T}_{E,I}^v(G)$ is the same for all $I\subset[s]$.

\end{thm}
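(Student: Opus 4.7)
The plan is to reduce Theorem~\ref{tcc} to its perfect-matching counterpart, Theorem~\ref{tcb}, via Temperley's bijection (Theorem~\ref{tda}); this mirrors the derivation of Theorem~\ref{tbc} from Theorem~\ref{tba} carried out in Section~4.

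Since $v$ lies on the unbounded face of $G$, Theorem~\ref{tda} supplies a weight-preserving bijection $T\mapsto\mu_T$ between the spanning trees of $G$ rooted at $v$ and the perfect matchings of $H_G\setminus v$. For each $i\in[s]$, let $f_i$ denote the half-edge of $e_i$ at $a_i$; since $e_i$ is oriented outward from $\ell$ (i.e., away from $a_i$), $f_i$ is the tail half-edge of $e_i$. By Corollary~\ref{tdb}, $T$ contains the oriented edge $e_i$ if and only if $\mu_T$ contains $f_i$, and $T$ contains the oriented edge $e_i'$ if and only if $\mu_T$ contains $f_i'$, the reflection of $f_i$ across $\ell$. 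Hence the Temperley bijection identifies ${\mathcal T}_{E,I}^v(G)$ with the set of perfect matchings of $H_G\setminus v$ that contain $f_i$ for $i\in I$ and $f_i'$ for $i\in[s]\setminus I$.

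Next, I would apply Theorem~\ref{tcb} to $H_G\setminus v$ with edge set $E_H:=\{f_1,\ldots,f_s\}$. The graph $H_G\setminus v$ is weighted and symmetric about $\ell$ (since $v\in\ell$), the edges in $E_H$ are pairwise disjoint (inherited from the $e_i$'s), and each $f_i$ has exactly one endpoint on $\ell$, namely the original vertex $a_i$ of $G$. Choosing an alternating $a/b$-labeling of the vertices of $H_G\setminus v$ on $\ell$ under which every $a_i$ ($i\in[s]$) is an $a$-type vertex, Theorem~\ref{tcb} gives the desired independence in $I$ of the weighted matching sum, and translating back through the Temperley bijection yields Theorem~\ref{tcc}.

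The main subtlety I expect is confirming that the required alternating $a/b$-labeling on $\ell$ in $H_G\setminus v$ can indeed be arranged so that each $a_i$ (for $i\in[s]$) is an $a$-type vertex. The vertices of $H_G$ on $\ell$ consist of the original vertices of $G$, the midpoints of edges of $G$ along $\ell$, and the face-vertices of faces of $G$ symmetric about $\ell$. Since $v$ is on $\ell$ but not incident to any edge in $E$, and since all the original vertices lie in the same bipartition class of $H_G$, one can carry out the necessary bookkeeping (using also the parity condition imposed by the existence of a perfect matching in $H_G\setminus v$) so that Theorem~\ref{tcb} applies exactly as intended.
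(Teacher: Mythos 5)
Your overall route is exactly the paper's: transport the problem through Temperley's bijection (Theorem~\ref{tda}) and Corollary~\ref{tdb} to the half-edges $f_i$ of the $e_i$ at the $a_{j_i}$'s, and then invoke Theorem~\ref{tcb} for the symmetric graph $H_G\setminus v$. Your identification of ${\mathcal T}_{E,I}^v(G)$ with the matchings of $H_G\setminus v$ containing $f_i$ for $i\in I$ and $f_i'$ for $i\in[s]\setminus I$ is correct, as is the observation that the $f_i$ are pairwise disjoint with exactly one endpoint each on $\ell$.

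However, the step you defer as ``bookkeeping'' is precisely the only place where the paper does actual work, and as written your proposal does not supply the argument. Theorem~\ref{tcb} requires the axis vertices to alternate $a_1,b_1,a_2,b_2,\dotsc$ with every distinguished vertex of type $a$; equivalently, you must show that any two consecutive original vertices of $G$ on $\ell$ are separated by an \emph{odd} number of vertices of $H_G$ on $\ell$. Neither of the facts you cite yields this: that all original vertices lie in one bipartition class of $H_G$ says nothing about how many axis vertices of $H_G$ sit between them, and the global parity coming from the existence of a perfect matching of $H_G\setminus v$ is likewise not the relevant condition. The missing argument is a local case analysis: if $a_i$ and $a_{i+1}$ are joined by an edge of $G$ along $\ell$, there is exactly one vertex (an edge-vertex) between them; if they lie on a common face of $G$ without being adjacent, there is exactly one vertex (a face-vertex) between them; and if they are separated by edges of $G$ crossing $\ell$ perpendicularly, each such edge contributes one edge-vertex flanked by face-vertices, again giving an odd total. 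Only with this parity in hand can one conclude that the set $\{a_{j_1},\dotsc,a_{j_s}\}$ may be taken as a subset of the $a$-type vertices in an admissible alternating labeling, so that Theorem~\ref{tcb} applies. With that lemma inserted, your proof coincides with the paper's.
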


\begin{figure}[t]
\vskip0.2in
\centerline{
\hfill
{\includegraphics[width=0.5\textwidth]{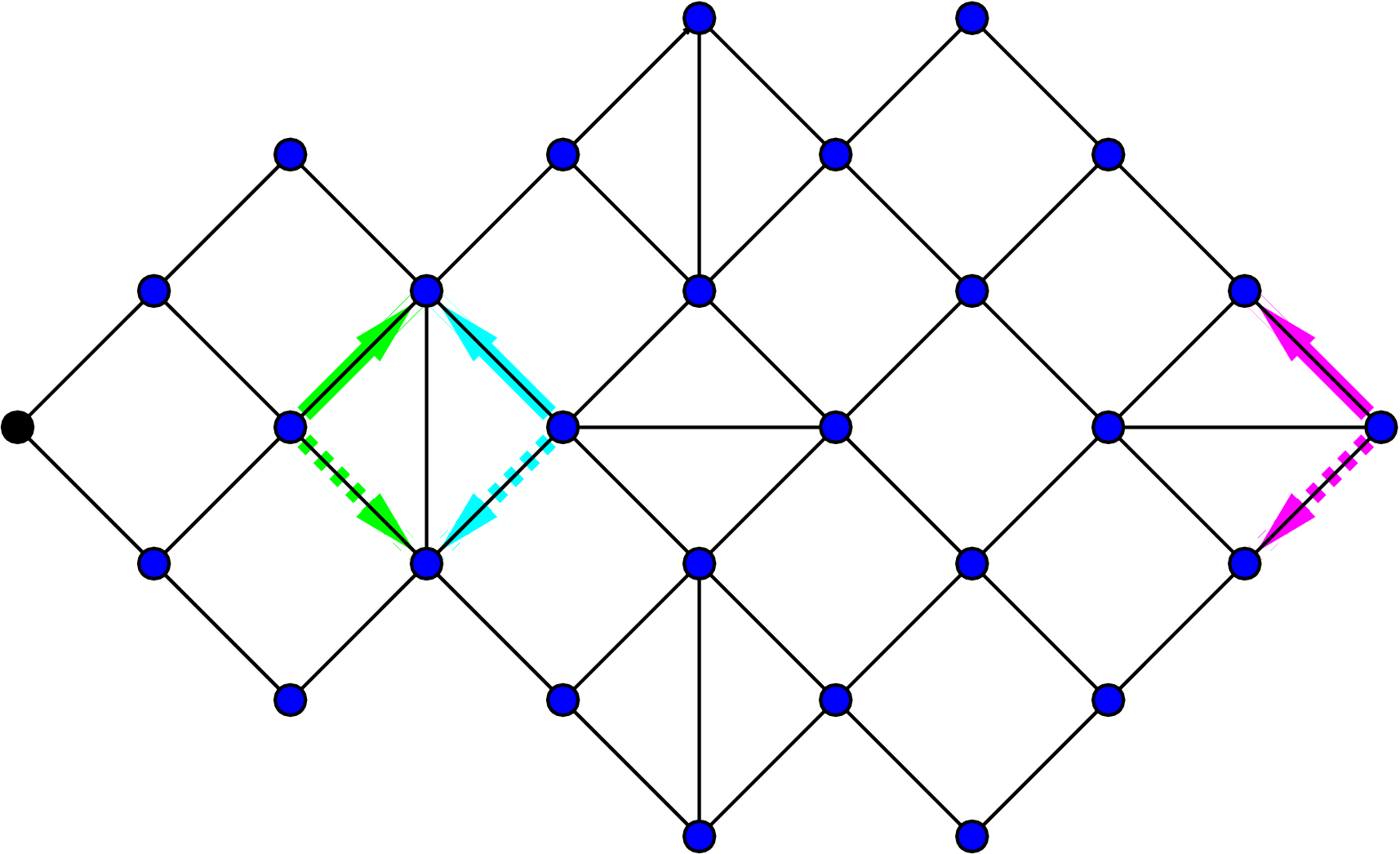}}
\hfill
}
\caption{\label{fca} An illustration of Theorem $\ref{tcc}$. Consider the eight subsets of the set of spanning trees $T$ rooted at the black vertex obtained by requiring that $T$ contains, from among the six shown oriented edges, one of each color (either the solid one or the dashed one). Then these eight subsets have the same number of elements.
}
\end{figure}

\begin{proof}


Consider the graph $H_G$ (the dual refinement of $G$ described at the beginning of Section 2), and draw it so that it is symmetric about $\ell$; in particular, if $f$ is a vertical edge of $G$ crossing $\ell$ --- which is an allowed possibility --- the edge-vertex of $H_G$ corresponding to $f$ will be on $\ell$. We claim that the number of vertices of $H_G$ on $\ell$ between $a_{i}$ and $a_{i+1}$ is odd.

Indeed, one of the following situations must occur: (1) vertices $a_i$ and $a_{i+1}$ are connected by an edge of $G$, (2) $a_i$ and $a_{i+1}$ are on the same face of $G$, but they are not connected by an edge, or (3) $a_i$ and $a_{i+1}$ belong to adjacent faces $F_1$ and $F_2$ of $G$, sharing a vertical edge $f$ that crosses $\ell$. In case (1), the edge-vertex corresponding to the edge connecting $a_i$ and $a_{i+1}$ is the only vertex of $H_G$ on $\ell$ between $a_{i}$ and $a_{i+1}$. In case (2), the face-vertex corresponding to the face containing $a_i$ and $a_{i+1}$ is the only  vertex of $H_G$ on $\ell$ between $a_{i}$ and $a_{i+1}$. In case (3), there are three vertices of $H_G$ on $\ell$ between $a_{i}$ and $a_{i+1}$: two face-vertices corresponding to the faces $F_1$ and $F_2$, and an edge-vertex corresponding to the vertical edge $f$.

This implies that for any subset $S$ of $\{a_1,\dotsc,a_k\}$, the number of vertices of $H_G$ on $\ell$ between two consecutive elements of $S$ is odd. In particular, the subset of $\{a_1,\dotsc,a_k\}$ incident to the edges in $E$ can be regarded as a subset of the set of $a_j$'s in Theorem \ref{tcb}.

Let $a_{j_1},\dotsc,a_{j_s}$ be the vertices of $e_1,\dotsc,e_s$ contained in $\ell$, respectively.
Using Temperley's bijection (see Theorem \ref{tda}(b)) and Corollary \ref{tdb}, the statement that the sum of the weights of the spanning trees in ${\mathcal T}_{E,I}^v(G)$ is the same for all $I\subset[s]$ is equivalent to the statement that the sum of all the weights in the set of perfect matchings of $H_G\setminus v$ in which each $a_{j_i}$ is matched by the half-edge of $e_i$ incident to it for $i\in I$, and by the half-edge of $e_i'$ incident to it for $i\in [s]\setminus I$, is independent of $I$. However, this follows directly from Theorem \ref{tcb}. 
\end{proof}




The next theorem was motivated by the following result due to Johansson. Let $Y_n$ be the indicator random variable for the event that the path from $(n,n)$ to infinity in the uniform spanning tree on the infinite square grid $\Z^2$ has its first step above the diagonal $x=y$. Then it follows from work of Johansson \cite{Johansson} on domino tilings that the $Y_n$'s are independent and identically distributed random variables with Bernoulli distribution of parameter $1/2$.

The following is a finitary generalization of this.

\begin{figure}[t]
\vskip0.2in
\centerline{
\hfill
{\includegraphics[width=0.5\textwidth]{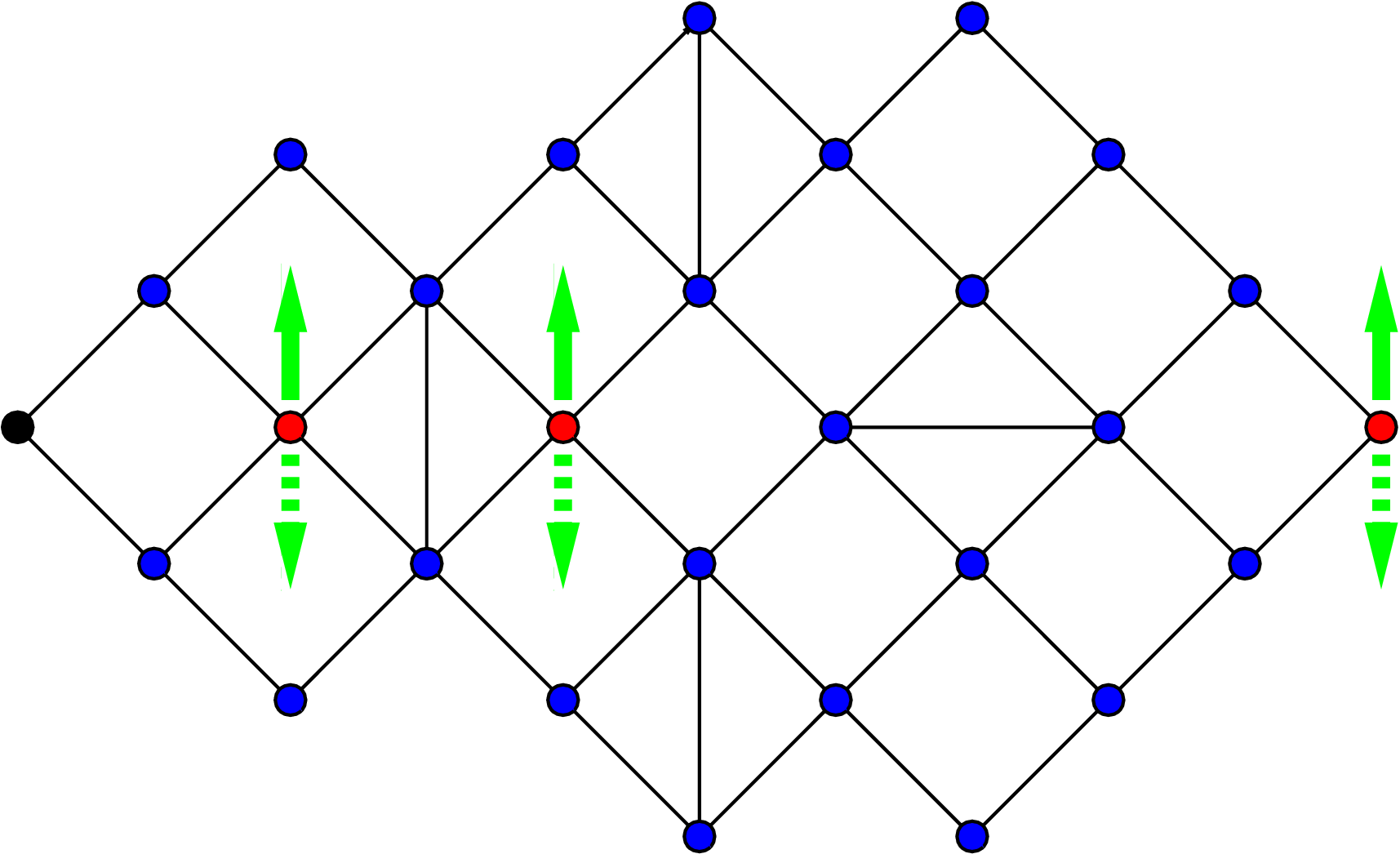}}
\hfill
}
\caption{\label{fcb} An illustration of Theorem $\ref{tcd}$. Select a spanning tree $T$ uniformly at random from the set of spanning trees of $G$ rooted at the black vertex $v$. Shown in red are the vertices different from $v$ on the symmetry axis which are not incident to an edge contained in the symmetry axis. For each red vertex define a random variable that is 1 if the vertex is exited in $T$ above the symmetry axis, and 0 if exited below. Then these are independent Bernoulli random variables of parameter~$1/2$.
}
\end{figure}

\begin{thm} 
\label{tcd}
Let $G$ be a weighted plane graph symmetric about the symmetry axis $\ell$, and let $v$ be a vertex of $G$ that is both on $\ell$ and on the unbounded face. 
Let $v_1,\dotsc,v_n$ be the vertices of $G\setminus v$ that are on $\ell$ and are not incident to edges contained in $\ell$.

Select a spanning tree $T$ uniformly at random from the set ${\mathcal T}^v(G)$ of spanning trees of $G$ rooted at $v$, and let $Y_i$ be the random variable which takes value $1$ if, in $T$, vertex $v_i$ is exited along an edge above\footnote{I.e.\ in $T$ the unique outgoing edge from vertex $v_i$ is an edge that is situated above $\ell$.} $\ell$, and value $0$ otherwise. Then $\{Y_i:i=1,\dotsc,n\}$ are independent and identically distributed random variables with Bernoulli distribution of parameter $1/2$.

\end{thm}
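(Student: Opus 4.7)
The plan is to apply Temperley's bijection (Theorem \ref{tda}) to translate Theorem \ref{tcd} into an analogous statement about the perfect matchings of $H_G\setminus v$, and then deduce it from Theorem \ref{tcb}. By Corollary \ref{tdb}, the bijection $T\leftrightarrow\mu_T$ takes the event ``the exit edge of $T$ at $v_i$ is $g$'' to the event ``$\mu_T$ contains the half-edge of $g$ incident to $v_i$.'' In particular, $Y_i(T)=\epsilon_i$ is equivalent to $v_i$ being matched in $\mu_T$ by a half-edge above $\ell$ (if $\epsilon_i=1$) or below $\ell$ (if $\epsilon_i=0$). Denoting this matching-side indicator by $Z_i(\mu)$, it suffices to show that $(Z_i)_{i=1}^n$ are i.i.d.\ Bernoulli$(1/2)$ for $\mu$ uniform on $\mathcal M(H_G\setminus v)$.

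For each $v_i$, let $\mathcal H_i^+$ and $\mathcal H_i^-$ denote the sets of half-edges of $H_G$ incident to $v_i$ that lie strictly above and below $\ell$, respectively; reflection across $\ell$ gives a natural bijection $\mathcal H_i^+\to\mathcal H_i^-$. For an arbitrary tuple $(h_1,\dotsc,h_n)\in\prod_i\mathcal H_i^+$, I will check that the $h_i$'s are pairwise disjoint as edges of $H_G$: they have distinct axis endpoints (the distinct $v_i$'s), and their edge-midpoint endpoints are also distinct, because otherwise a single edge of $G$ would be incident to two of the $v_i$'s, forcing it (by the simplicity of $G$ together with symmetry across $\ell$) to lie on $\ell$, contradicting the hypothesis that no $v_i$ is incident to an edge on $\ell$. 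Using the parity observation from the proof of Theorem \ref{tcc} (between any two consecutive vertices of $\{v_1,\dotsc,v_n\}\subseteq G\cap\ell$ there is an odd number of vertices of $H_G$ on $\ell$), the vertices of $H_G\setminus v$ on $\ell$ may be labeled alternately as $a$-type and $b$-type so that every $v_i$ is $a$-type. Theorem \ref{tcb}, applied to $H_G\setminus v$ with $E=\{h_1,\dotsc,h_n\}$, then yields that for each $I\subseteq[n]$,
\[
\#\bigl\{\mu\in\mathcal M(H_G\setminus v):h_i\in\mu\text{ for }i\in I,\ h'_i\in\mu\text{ for }i\notin I\bigr\}
\]
is independent of $I$.

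To conclude, for each $\epsilon\in\{0,1\}^n$ I decompose
\[
\#\{\mu:Z_i(\mu)=\epsilon_i\ \forall i\}=\sum_{(h_1,\dotsc,h_n)\in\prod_i\mathcal H_i^+}\#\bigl\{\mu:h_i\in\mu\text{ if }\epsilon_i=1,\ h'_i\in\mu\text{ if }\epsilon_i=0\bigr\}
\]
and observe that, by the preceding paragraph, each summand depends only on the tuple $(h_1,\dotsc,h_n)$ and not on $\epsilon$. Hence all $2^n$ counts $\#\{\mu:Z_i=\epsilon_i\ \forall i\}$ are equal to $|\mathcal M(H_G\setminus v)|/2^n$, proving that $(Z_1,\dotsc,Z_n)$, and therefore $(Y_1,\dotsc,Y_n)$, are i.i.d.~Bernoulli$(1/2)$. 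The main technical point I anticipate is the matching-side bookkeeping required to apply Theorem \ref{tcb} inside $H_G\setminus v$: namely, verifying for \emph{every} choice of $(h_1,\dotsc,h_n)$ that the chosen half-edges simultaneously satisfy the pairwise-disjointness and $a$-type incidence hypotheses of Theorem \ref{tcb}, both of which reduce cleanly to the parity observation already used in the proof of Theorem \ref{tcc}.
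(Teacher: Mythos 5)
Your proof is correct and follows essentially the same route as the paper's: the paper likewise partitions the trees with prescribed exit sides at the $v_i$'s into classes indexed by the specific exit edge chosen at each $v_i$ and matches these classes in pairs of equal weight via Theorem~\ref{tcc}, which is itself just Theorem~\ref{tcb} transported through Temperley's bijection and Corollary~\ref{tdb}. The only difference is that you inline that transport, applying Theorem~\ref{tcb} directly to tuples of half-edges in $H_G\setminus v$ instead of citing Theorem~\ref{tcc}, with the same parity/disjointness checks.
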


\begin{proof}
  Let $I\subset[n]$, and denote by ${\mathcal T}_I^v(G)$ the subset of ${\mathcal T}^v(G)$ consisting of all spanning trees in which $v_i$ is exited along an edge above $\ell$ for $i\in I$, and along an edge below $\ell$ for $i\in [n]\setminus I$. Then to prove the theorem we need to show that the total weight of the spanning trees in ${\mathcal T}_I^v(G)$ is the same for all $I\subset[n]$, and that each $Y_i$ is a Bernoulli random variable with parameter $1/2$.

Let $d_i$ be the number of edges of $G$ incident to $v_i$ from above, for $i=1,\dotsc,n$. Then each ${\mathcal T}_I^v(G)$ can be naturally partitioned into $d_1d_2\cdots d_n$ classes, according to which of the $d_i$ edges incident from above to $v_i$ (if $i\in I$) or which of the $d_i$ edges incident from below to $v_i$ (if $i\in [n]\setminus I$) is the edge along which vertex $v_i$ is exited, for $i=1,\dotsc,n$. For any two subsets $I,I'\subset[n]$, the $d_1\cdots d_n$ classes of the partition resulting from $I$ can be matched to the $d_1\cdots d_n$ classes of the partition resulting from $I'$ so that in corresponding classes each vertex $v_i$ is exited either along the same edge, or along two edges that are reflections of each other across $\ell$. By Theorem \ref{tcc}, the matched up classes of these partitions have the same total weight, and therefore ${\mathcal T}_I^v(G)$ and ${\mathcal T}_{I'}^v(G)$ themselves have the same total weight. This proves the first statement at the end of the previous paragraph.

To prove the second statement, note that the argument above works just as well if instead of considering the set $\{v_1,\dotsc,v_n\}$ of all the vertices of $G\setminus v$ that are on $\ell$ and are not incident to edges contained in $\ell$, we consider any subset of it. The case when this subset is just the singleton $\{v_i\}$ implies that $Y_i$ is a Bernoulli random variable with parameter $1/2$. This completes the proof. \end{proof}
  
The following special case leads to a counterpart of Johansson's above mentioned result which seems to be new.

\begin{figure}[t]
\vskip0.2in
\centerline{
\hfill
{\includegraphics[width=0.35\textwidth]{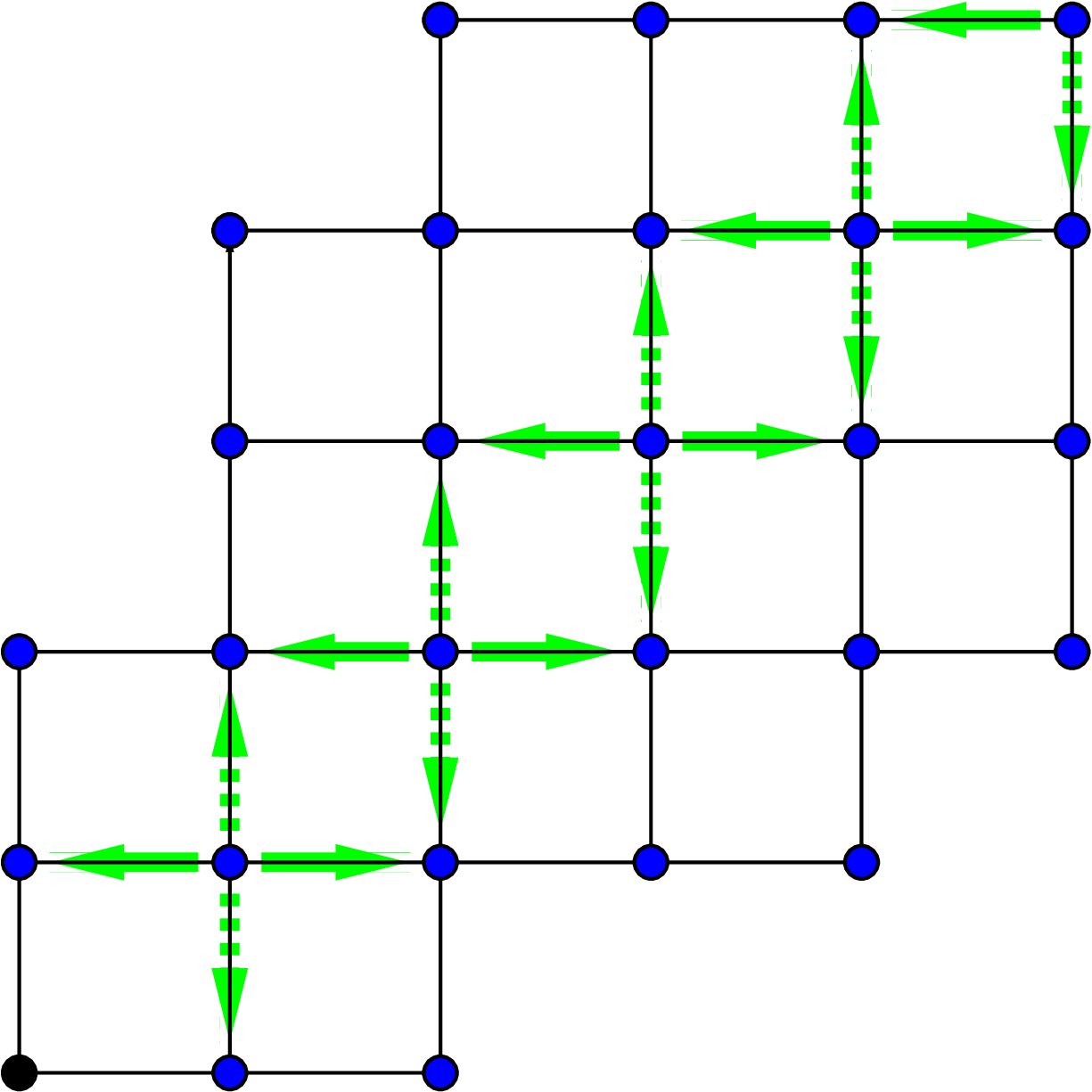}}
\hfill
}
\caption{\label{fcc} An illustration of Corollary $\ref{tce}$. Select a spanning tree $T$ uniformly at random from the set of spanning trees of $G$ rooted at the black vertex $v$. For each vertex different from $v$ on the symmetry axis define a random variable that is~1 if the vertex is exited in $T$ horizontally, and 0 if exited vertically. Then these are independent Bernoulli random variables of parameter~$1/2$.
}
\end{figure}

\begin{cor} 
\label{tce}
Let $G$ be a finite subgraph of the grid graph $\Z^2$, symmetric about a diagonal $\ell$ of the grid, and let $v$ be a vertex of $G$ that is both on $\ell$ and on the unbounded face.

Select a spanning tree $T$ uniformly at random from the set ${\mathcal T}^v(G)$ of spanning trees of $G$ rooted at $v$, and let $Z_i$ be the random variable which takes value $1$ if, in $T$, vertex $v_i$ is exited along a horizontal edge, and value $0$ otherwise. Then $\{Z_i:i=1,\dotsc,n\}$ are independent and identically distributed random variables with Bernoulli distribution of parameter $1/2$.

\end{cor}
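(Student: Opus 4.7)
\medskip
\noindent\textit{Proof plan.} Although the statement looks like a direct consequence of Theorem~\ref{tcd}, it is not: for a diagonal symmetry axis $\ell$, reflection across $\ell$ interchanges right~$\leftrightarrow$~up and left~$\leftrightarrow$~down, so it sends the set of horizontal edges at a diagonal vertex onto the set of vertical edges, whereas the edges above $\ell$ at that vertex form the set $\{\text{up},\text{left}\}$, which is sent to $\{\text{right},\text{down}\}$ below; thus the partition ``horizontal vs vertical'' does \emph{not} match the partition ``above vs below'' of Theorem~\ref{tcd}. My plan is instead to go back to Theorem~\ref{tcc} and run the argument with a different, type-dependent, choice of $E$.

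Because a diagonal of $\Z^2$ contains no edge of $\Z^2$, no edge of $G$ lies on $\ell$, so the vertices $v_1,\dotsc,v_n$ appearing in the statement are exactly all vertices of $G\setminus v$ on $\ell$. For $v_i=(a,a)$, the four possible $\Z^2$-edges at $v_i$ split under $(x,y)\mapsto(y,x)$ into the two reflection-pairs
\[
A_i=\{\text{right},\text{up}\},\qquad B_i=\{\text{left},\text{down}\},
\]
each containing one horizontal edge and one vertical edge, and each (by the symmetry of $G$) containing either $0$ or $2$ edges of $G$. I would then assign to each $T\in{\mathcal T}^v(G)$ its \emph{type} $\tau(T)=(\tau_1,\dotsc,\tau_n)\in\{A,B\}^n$, where $\tau_i$ records which of $A_i,B_i$ contains the outgoing edge of $T$ at $v_i$; this produces a partition of ${\mathcal T}^v(G)$ into subsets ${\mathcal T}_\tau$.

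For each type $\tau$, set $E_\tau=\{e_1^\tau,\dotsc,e_n^\tau\}$, with $e_i^\tau$ the horizontal edge of the reflection-pair labeled by $\tau_i$ (the right edge if $\tau_i=A$, the left edge if $\tau_i=B$). The $e_i^\tau$ are pairwise disjoint (horizontal edges at distinct diagonal vertices lie on distinct horizontal lines), each is incident to exactly one diagonal vertex, and $v$ is incident to none of them (as $v\in\ell$ while the non-$v_i$ endpoint of each $e_i^\tau$ lies off $\ell$). Therefore Theorem~\ref{tcc} applies to $E_\tau$ and yields a common total weight $W_\tau$ for all $2^n$ subsets ${\mathcal T}_{E_\tau,I}^v(G)$, $I\subseteq[n]$. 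Since $(e_i^\tau)'$ is the vertical edge of the same reflection-pair as $e_i^\tau$, inside ${\mathcal T}_\tau$ the condition $T\in{\mathcal T}_{E_\tau,I}^v(G)$ is exactly the condition $\{i:Z_i(T)=1\}=I$.

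Summing over the finitely many types $\tau$ that are realized in ${\mathcal T}^v(G)$: for every $a\in\{0,1\}^n$, the total weight of trees with $(Z_1,\dotsc,Z_n)=a$ equals $\sum_\tau W_\tau$, independent of $a$, while the total weight of ${\mathcal T}^v(G)$ is $2^n\sum_\tau W_\tau$. Dividing gives the uniform joint probability $1/2^n$ for every $a$, which is the desired independence and Bernoulli$(1/2)$ conclusion. The main obstacle I anticipate is the opening conceptual one: recognizing that Theorem~\ref{tcd} does not apply verbatim for a diagonal axis and that one must pass to the ``coarse-type'' partition and invoke Theorem~\ref{tcc} anew for each type; once this reorganization is in place, the verifications (disjointness of $E_\tau$, non-incidence of $v$, identification of $Z_i=1$ with $i\in I$) are immediate from the axis-aligned geometry of $\Z^2$.
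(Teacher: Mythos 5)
Your proposal is correct and follows essentially the same route as the paper: the key observation that reflection across a diagonal swaps horizontal and vertical edges, followed by an application of Theorem \ref{tcc} to one chosen edge per diagonal vertex and a sum over the finitely many classes (your ``types'' are exactly the classes into which the paper partitions each ${\mathcal T}_I^v$ before matching them via Theorem \ref{tcc}). Your opening remark that Theorem \ref{tcd} does not apply verbatim is also consistent with the paper, which likewise reruns the argument with the redefined sets rather than citing Theorem \ref{tcd} directly.
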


\begin{proof}
  Note that for any edge $e$ incident to a vertex on $\ell$, $e$ is  horizontal (resp., vertical) precisely if the reflection $e'$ of $e$ across $\ell$ is vertical (resp., horizontal). The statement follows then by the same argument as in the proof of Theorem \ref{tcd}. Indeed, one just needs to change the definition of ${\mathcal T}_I^v$ to be the set of spanning trees $T$ of $G$ rooted at $v$ in which the unique outgoing edge from $v_i$ is horizontal, if $i\in I$, and vertical otherwise. The classes of the resulting partitions of the sets of spanning trees can be matched up in equinumerous pairs by Theorem \ref{tcc}. \end{proof}

{\it Remark $5$.} By taking $G$ to be the subgraph of $\Z^2$ induced by the lattice points $(x,y)$ with $\max(|x|,|y|)\leq m$, and taking the limit as $m$ goes to infinity, we obtain that if  $Z_n$ is the indicator random variable for the event that the path from $(n,n)$ to infinity in the uniform spanning tree on the infinite square grid $\Z^2$ has its first step along a horizontal edge, then the $Z_n$'s are independent and identically distributed random variables with Bernoulli distribution of parameter~$1/2$. This is a counterpart of Johansson's result that seems to be new.

\medskip
We end this section by presenting a combinatorial proof of a result of Lyons (see \cite[Theorem~4.8]{Lyons}) on the independence of edges along a diagonal being contained in the uniform spanning tree on $\Z^2$.

\medskip
\begin{thm}[\sc Lyons \cite{Lyons}]
\label{tcf}
Let $e$ be an edge in $\Z^2$. For $n\in\Z$, let $X_n$ be the indicator random variable of the event that the edge $e+(n,n)$ is contained in the uniform spanning tree on $\Z^2$. Then the $X_n$'s are independent Bernoulli random variables of parameter $1/2$.
\end{thm}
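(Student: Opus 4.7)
The plan is to pass from the uniform spanning tree on $\Z^2$ to a symmetric finite approximation and then use Temperley's bijection together with a bijective argument in the spirit of Theorem \ref{tcb}. Without loss of generality, assume $e = \{(a,b),(a+1,b)\}$ is horizontal with $a \le b$. Every translate $e + (n,n)$ then has left endpoint $u_n := (n+a,n+b)$ on the diagonal $\ell : y = x + (b-a)$, which is an axis of symmetry of $\Z^2$. Let $G_m$ be a large finite subgraph of $\Z^2$ symmetric across $\ell$ and containing all edges $e+(n_j,n_j)$ for the indices $j = 1,\dotsc,k$ involved in the finite joint event under consideration. Since the UST on $G_m$ converges weakly on cylinder events to the UST on $\Z^2$, it suffices to analyze the joint law of $(X_{n_1},\dotsc,X_{n_k})$ on $G_m$ and let $m \to \infty$.

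By Temperley's bijection (Theorem \ref{tda}), USTs of $G_m$ rooted at a vertex $v$ on $\ell$ on the boundary of the unbounded face correspond bijectively and uniformly to perfect matchings $\mu$ of $H_{G_m} \setminus v$. By Corollary \ref{tdb} applied to the two half-edges of $e + (n,n)$, the event $\{e + (n,n) \in T\}$ corresponds under this bijection to the event that the edge-vertex $m_n$ of $e + (n,n)$ is matched in $\mu$ to one of its two original-vertex neighbors, rather than to one of its two face-vertex neighbors. Consequently, Theorem \ref{tcf} reduces to the combinatorial claim: for every $I \subseteq \{1,\dotsc,k\}$, the number of perfect matchings of $H_{G_m} \setminus v$ in which $m_{n_j}$ is matched to an original vertex if and only if $j \in I$ depends only on $k$ and not on $I$. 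Granted the claim, each of the $2^k$ joint match-type patterns occurs equally often, so $(X_{n_j})$ are iid Bernoulli$(1/2)$ on $G_m$, and the statement for the UST on $\Z^2$ follows in the limit.

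To establish the combinatorial claim, I would construct, for each $j$, a weight-preserving involution $\Phi_j$ on the set of perfect matchings of $H_{G_m} \setminus v$ that swaps ``$m_{n_j}$ matched to OV'' with ``$m_{n_j}$ matched to FV,'' while preserving the OV-vs-FV type of the match at every other $m_{n_{j'}}$; composing such involutions then yields bijections between the match-type classes corresponding to any two patterns $I$ and $I'$. The involution $\Phi_j$ is built via the gliding procedure of Section 3 applied locally at $m_{n_j}$: starting at $m_{n_j}$, one traces an alternating path on $H_{G_m}$ by gliding alternately on the frame and the dual frame, and exploits the reflective symmetry across $\ell$ --- embodied in Theorem \ref{tcb} for the pairs of reflection-partner half-edges at $u_{n_j}$ and at the face-vertex $f_{n_j}^+ := (n_j + a + \tfrac12,\, n_j + b + \tfrac12)$, both of which lie on $\ell$ --- to close this path into an alternating cycle. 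Shifting $\mu$ along the cycle flips the OV/FV type of the match at $m_{n_j}$.

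The principal obstacle is showing that the alternating cycles produced at distinct indices $j$ are pairwise vertex-disjoint, so that the involutions $\Phi_j$ commute and their compositions realize the required bijections between the match-type classes for arbitrary $I, I' \subseteq \{1,\dotsc,k\}$. This is the analogue in the present setting of the non-intersection of the gliding family $\mathcal{P}_\mu$ proved in Section 3, and its verification is expected to combine Lemma \ref{txx}, applied to both the frame and the dual frame, with the reflective symmetry of $H_{G_m}$ across $\ell$ and the planar arrangement of the $m_{n_j}$'s along a line parallel to $\ell$.
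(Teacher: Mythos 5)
Your reduction via Temperley's bijection is the right first move and matches the paper's: $e+(n,n)\in T$ precisely when the corresponding perfect matching of the refined lattice matches the edge-vertex of $e+(n,n)$ to an original vertex rather than a face-vertex. The genuine gap is the involution you propose to realize the flip at $m_{n_j}$. You describe it as ``gliding alternately on the frame and the dual frame'' and then ``closing the path into an alternating cycle'' by symmetry; neither step is substantiated, and neither is how the flip actually works. The gliding of Section 3 stays on one frame at a time, terminates at boundary vertices rather than closing up, and provides no mechanism for changing the OV/FV type of the match at a prescribed edge-vertex. The involution that does the job (Theorem \ref{tcb}, refining \cite[Lemma 1.1]{ciucu1997enumeration}) is different and much more direct: superimpose $\mu$ with its reflection $\mu'$ across the diagonal; $\mu\cup\mu'$ decomposes into alternating cycles (and, on the infinite lattice, infinite paths); replace $\mu$ by $\mu'$ on the single component through $a_t$. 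The key fact is that this component meets the axis in exactly one other lattice point, and that point is a $b_j$, hence not one of the edge-vertices under consideration. This also dissolves your ``principal obstacle'': no simultaneous disjointness of cycles at different indices is needed, because one passes from ${\mathcal M}_I$ to ${\mathcal M}_{I\setminus\{t\}}$ one index at a time.

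Two further points. First, your symmetry axis is misplaced by half a step: you put the left endpoints $u_n$ on $\ell$, but flipping ``$e+(n,n)\in T$'' versus ``$e+(n,n)\notin T$'' requires the reflection to fix the edge-\emph{midpoint} $m_n$ and to exchange its two original-vertex neighbours with its two face-vertex neighbours; the correct axis is the diagonal through the midpoints (the paper's $a_i=(i+\tfrac12,i)$), which lies in $(\tfrac12\Z)^2$ but contains no vertex of $\Z^2$. A Theorem \ref{tcb}--type swap at $u_n$ as you set it up controls which edge \emph{exits} $u_n$ in the tree (the situation of Theorems \ref{tcc} and \ref{tcd}), not containment of $e+(n,n)$. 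Second, this misplacement undermines the finite-approximation scheme: for $H_{G_m}\setminus v$ to be symmetric the Temperley root $v$ must lie on the axis, but the correct axis contains no original vertices. The paper sidesteps both issues by working directly on the infinite lattice --- spanning trees of $\Z^2$ rooted at infinity against perfect matchings of all of $(\tfrac12\Z)^2$ --- and by handling the one new feature this creates, namely that $\mu\cup\mu'$ may contain infinite paths as well as cycles.
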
  

\begin{figure}[t]
\vskip0.2in
\centerline{
\hfill
{\includegraphics[width=0.4\textwidth]{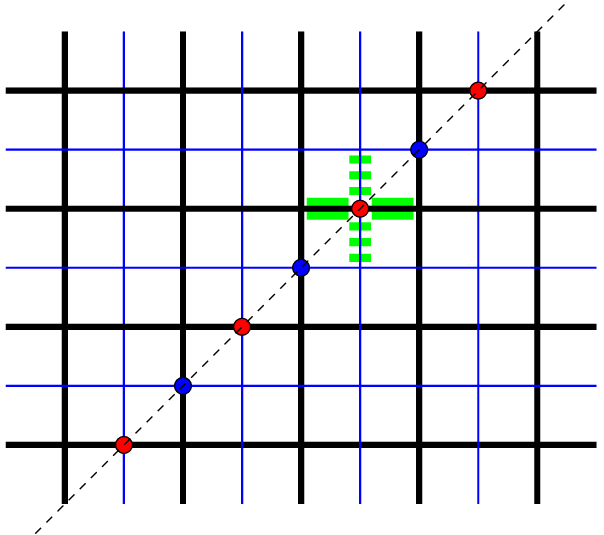}}
\hfill
}
\caption{\label{fcd} 
The graph $\Z^2$ is shown in black lines; the refinement $(\frac12\Z)^2$ is obtained by adding the blue lines. The lattice points $a_i$ are shown in red, the lattice points $b_i$ in blue. The three $T$ contains (resp., does not contain) the horizontal edge of $\Z^2$ with the solid green markings if and only if the corresponding perfect matching of $(\frac12\Z)^2$ contains one of the edges with a solid green marking (resp., one of the edges with the dashed green markings). Shifting along the cycle $C$ or path $P$ in the proof of Theorem \ref{tcf} changes the edge of the perfect matching incident to $a_i$ from horizontal to vertical, and from vertical to horizontal.
}
\end{figure}

\begin{proof}
  Let $0\leq s_1<\cdots<s_k$ be arbitrary integers and set $S=\{s_1,\dotsc,s_k\}$. Let $I\subset S$. Denote by ${\mathcal T}_I$ the set of spanning trees of $\Z^2$ rooted at infinity which contain the edges $\{(i,i),(i,i+1)\}$ for $i\in I$ and do not contain the edges $\{(i,i),(i,i+1)\}$ for $i\in S\setminus I$. To prove the theorem it suffices to show that there are natural bijections between the sets ${\mathcal T}_I$, for all $I\subset S$.

Note that Temperley's construction in Theorem \ref{tda} can be used to yield a natural bijection between spanning trees of $\Z^2$ rooted at infinity and perfect matchings of the refined grid graph~$(\frac12\Z)^2$ whose vertices form the set $\{(x/2,y/2):x,y\in\Z\}$. 

Denote by $a_i$ the lattice point $(i+\frac12,i)$, and by $b_i$ the lattice point $(i+1,i+\frac12)$, for $i\in\Z$. All these lattice points are on a diagonal $\ell$ of the lattice $(\frac12\Z)^2$, and the $a_i$'s and $b_i$'s alternate along $\ell$.

Consider a spanning tree $T$ of $\Z^2$ rooted at infinity. Then $T\in{\mathcal T}_I$ if and only if the perfect matching $\mu$ of $(\frac12\Z)^2$ corresponding to $T$ via Temperley's construction contains one of the two horizontal edges incident to  $a_i$ for $i\in I$, and one of the two vertical edges incident to $a_i$ for $i\in S\setminus I$. 
Let ${\mathcal M}_I$ be the set of perfect matchings of $(\frac12\Z)^2$ satisfying these conditions. 

The lattice points of $(\frac12\Z)^2$ on $\ell$ in the first quadrant are then, in order from southwest to northeast, $a_1,b_1,a_2,b_2,\dotsc,a_k,b_k,\dotsc$.
So the set-up is the same as in Theorem \ref{tcb}, with the one difference that instead of the finite graph $G$ we have the infinite graph $(\frac12\Z)^2$. Despite this difference, the construction in the proof of Theorem \ref{tcb} can still be used to obtain natural bijections between the sets ${\mathcal M}_I$, for all $I\subset S$; these in turn will induce, via Temperley's construction, natural bijections between the sets ${\mathcal T}_I$, for all $I\subset S$, thus proving the statement of the theorem.

  Indeed, it suffices to construct a bijection between ${\mathcal M}_I$ and ${\mathcal M}_{I\setminus t}$, for any $t\in I$. Let $\mu\in{\mathcal M}_I$, and let $\mu'$ be its reflection across $\ell$. Then $\mu\cup\mu'$ is a union of disjoint cycles {\it and infinite paths}. If~$a_t$ is contained in a cycle $C$ of $\mu\cup\mu'$, replace the part of $\mu$ along $C$ by $\mu'$; since, as we have seen in the proof of Theorem \ref{tcb}, $C$ only contains one other lattice point of $(\frac12\Z)^2$ on $\ell$, and that other lattice point is a $b_j$, the resulting perfect matching $\mu''$ is in ${\mathcal M}_{I\setminus t }$. If on the other hand $a_t$ is contained in an infinite path $P$ of $\mu\cup\mu'$, let $\mu''$ be the perfect matching of $(\frac12\Z)^2$ obtained from $\mu$ by replacing its portion along $P$ by $\mu'$. Then $\mu''$ is clearly in  ${\mathcal M}_{I\setminus t}$, $\mu\mapsto\mu''$ is a natural bijection between ${\mathcal M}_I$ and ${\mathcal M}_{I\setminus t}$, and the proof is complete.
\end{proof}

\section{Concluding Remarks}

In this paper we presented three settings which illustrate the usefulness of the strong connection between perfect matchings of plane graphs and spanning trees (or spanning forests) of some closely related graphs. First, Theorem \ref{tba}, a bijection between perfect matchings of two different families of graphs, found its most natural phrasing as a statement about spanning trees (see Theorem \ref{tbc}). Second, Theorem \ref{tec}, an extension of Temperley's classical bijection between spanning trees and perfect matchings, was used to provide a bijection between two families of graphs (see Corollary \ref{teb}), which solves an open problem posed by Corteel, Huang and Krattenthaler \cite[Page 3]{Corteeletal2023AztecT1}. And third, a perfect matching bijection for symmetric plane graphs (see \cite[Lemma1.1]{ciucu1997enumeration} and also its refinement Theorem \ref{tcb} in this paper), was used to prove independence results in uniform spanning trees (see Theorems \ref{tcd}, \ref{tce} and \ref{tcf}).

It would be interesting to find an explanation for the equality $\M(G^{+})=\M(G^{-})$ of Theorem \ref{tba} from the point of view of linear algebra --- in other words, a direct justification of the fact that the two Pfaffians which give the number of perfect matchings of the graphs $G^+$ and $G^-$ (cf.\ the work of Kasteleyn \cite{kasteleyn1961statistics} and Temperley and Fisher \cite{temperley1961dimer}) are equal.





\bigskip
{\bf Acknowledgments.} When the paper was completed, the first author was participating in the program ``Geometry, Statistical Mechanics, and Integrability” at Institute for Pure and Applied Mathematics (IPAM). He would like to thank the program organizers and IPAM staff members for their hospitality during his stay in Los Angeles. The second author thanks Russell Lyons for pointing out his result in Theorem 4.8 of \cite{Lyons}, for asking about the possibility of a combinatorial proof, and for interesting discussions on this subject. Theorems \ref{tcd} and \ref{tce}, as well as the new proof of Theorem \ref{tcf} presented in Section 6, grew out from these discussions.



\begin{thebibliography} {99}


\bibitem{ciucu1997enumeration}
  M. Ciucu, Enumeration of perfect matchings in graphs with reflective symmetry, {\it J. Combin. Theory Ser. A} {\bf 77} (1997), 67--97.

\bibitem{df} 
  M. Ciucu, Cruciform regions and a conjecture of Di Francesco, {\it Proc. Amer. Math. Soc.} {\bf 150} (2022), 4655--4670.
  
 \bibitem{Corteeletal2023AztecT1}
  S. Corteel, F. Huang, C. Krattenthaler, Domino tilings of generalized Aztec triangles, preprint arXiv:2305.01774.

\bibitem{dfg}
 P. Di Francesco and E. Guitter, Twenty-vertex model with domain wall boundaries and domino tilings, {\it Electron. J. Combin.} {\bf 27} (2020), no. 2, Paper No. 2.13.
  
 \bibitem{DiFrancesco202120Vmodel}
  P. Di Francesco, Twenty vertex model and domino tilings of the Aztec triangle, {\it Electron. J. Combin.} {\bf 28} (2021), Paper No. 4.38, 50.
 
\bibitem{fisher1961statistical}
  M. E. Fisher, Statistical Mechanics of Dimers on a Plane Lattice, {\it Phys. Rev.} {\bf 124} (1961), 1664--1672.

\bibitem{jockusch1994perfect}
  J. W. Jockusch, Perfect matchings and perfect squares, {\it J. Combin. Theory Ser. A} {\bf 67} (1994), 100--115.

\bibitem{Johansson}
  K. Johansson, Non-intersecting paths, random tilings and random matrices, {\it Probab. Theory Related Fields} {\bf 123} (2002), 225--280.
  
\bibitem{lovasz2007combinatorial}
  L. Lov\'asz, ``Combinatorial problems and exercises,'' Second Edition, North Holland, Amsterdam, 1993.

\bibitem{kasteleyn1961statistics}
  P. W. Kasteleyn, The statistics of dimers on a lattice: I. The number of dimer arrangements on a quadratic lattice, {\it Physica} {\bf 27} (1961), 1209--1225.

\bibitem{KPW}
  R. Kenyon, J. Propp and D. Wilson, Trees and matchings, {\it Electron. J. Combin.} {\bf 7} (2000), Paper No. R25.



 \bibitem{Koutschanetal2024AztecT2}
  C. Koutschan, C. Krattenthaler, M. Schlosser, Determinant evaluations inspired by Di Francesco's determinant for twenty-vertex configurations, preprint arXiv:2401.08481.

\bibitem{lovasz2007combinatorial}
  L. Lov\'asz, ``Combinatorial problems and exercises,'' Second Edition, North Holland, Amsterdam, 1993.
  
\bibitem{Lyons}
  R. Lyons, ``Probability on trees and networks,'' Corrected and updated paperback edition, Cambridge University Press, 2021.

\bibitem{temperley1961dimer}
  H. N. V. Temperley and M. E. Fisher, Dimer problem in statistical mechanics-an exact result, {\it Philosophical Magazine} {\bf 8} (1961), 1061--1063.

\bibitem{Temperley}
  H. N. V. Temperley, “Combinatorics,” London Math. Soc. Lecture Notes Series. 13 (1974), 202--204.

\bigskip
  
\end{thebibliography}

\end{document}